\newtheorem{theorem}[equation]{Theorem}
\newtheorem{lemma}[equation]{Lemma}
\newtheorem{corollary}[equation]{Corollary}
\newtheorem{proposition}[equation]{Proposition}
\numberwithin{equation}{section}
\begin{document}

\title{The Dwork-Frobenius operator on hypergeometric series}
\author{Alan Adolphson}
\address{Department of Mathematics\\
Oklahoma State University\\
Stillwater, Oklahoma 74078}
\email{adolphs@math.okstate.edu}
\author{Steven Sperber}
\address{School of Mathematics\\
University of Minnesota\\
Minneapolis, Minnesota 55455}
\email{sperber@math.umn.edu}
\date{\today}
\keywords{}
\subjclass{}
\begin{abstract}
We describe the action of the Dwork-Frobenius operator on certain $A$-hypergeometric series.  As a consequence, we obtain an integrality result for the coefficients of those series.  This implies an integrality result for classical hypergeometric series.
\end{abstract}
\maketitle

\section{Introduction}

Proving $p$-integrality results for hypergeometric series is a well-known problem.  Many authors have obtained such results, and we reviewed some of their history and applications in the Introduction to \cite{AS3}.  One of the main applications of $p$-integrality is the derivation of $p$-adic analytic formulas for zeros of zeta and $L$-functions of varieties over finite fields.  Such formulas involve $p$-adic analytic continuations of ratios of hypergeometric series.  Dwork originally obtained such results by studying the action of his Frobenius operator on a solution matrix for the Picard-Fuchs equation of a family of varieties.  A necessary condition for the analytic continuation was that the Picard-Fuchs equation have a solution with $p$-integral coefficients.  

We developed a different method for obtaining such results using $A$-hyper\-geometric series \cite{AS1,AS4}.  Our method avoids the problem of computing the Picard-Fuchs equation and finding its solution matrix.  However, we need to know $p$-integrality for not just one hypergeometric series but for an entire class of related hypergeometric series.  In \cite{AS4}, the coefficients of the relevant series were multinomial coefficients, hence trivially $p$-integral, so this difficulty did not arise.  In \cite{AS1}, the relevant series were partial derivatives of a single series, so it sufficed to prove integrality for that one series.  This followed from \cite[Theorem~6.3]{AS}.  

To apply our idea to more general situations, however, we needed an integrality result for all the series in the class.  In the special case where the coefficients of the $A$-hypergeometric series are ratios of factorials, the desired theorem is \cite[Theorem~2.4]{AS3}.  In this article we generalize that result to a wider class of hypergeometric series whose coefficients are ratios of Pochhammer symbols (Theorems~2.21 and~12.15).  We expect that this will lead to ratios of hypergeometric series with $p$-adic analytic continuation that have arithmetic import.

This paper is organized as follows.  In Section 2 we describe the $A$-hypergeometric series we are considering, establish some notation and basic properties, and state our main result.
Section 3 contains a construction of these $A$-hypergeometric series, as well as some related series that satisfy better $p$-adic estimates, by taking products of simpler one-variable series $\xi_j(t)$ and~$\hat{\xi}_j(t)$.  This ``separation of variables'' allows us to reduce some arguments to the one-variable case.  In Section 4, we introduce the $p$-adic spaces on which the Dwork-Frobenius operator will act, and, in Section~5, we show that the $\xi_j(t)$ and $\hat{\xi}_j(t)$ lie in these spaces.  Section 6 is used to construct an auxiliary function, an analogue of the $p$-adic Gamma function, that will play a role in Section~7.  Section 7 applies an idea of Dwork (writing as Boyarsky \cite{D1}): we show that the Dwork-Frobenius operator maps each $\xi_j(t)$ to another $\xi'_j(t)$ times a simple factor.  This result is extended to the $\hat{\xi}_j(t)$ in Section 8.  We extend this result to $A$-hypergeometric series in 
Section~9, showing that $A$-hypergeometric series have an ``eigenvector-like'' property for the Dwork-Frobenius operator (Theorem~9.13).  In Section 10 we restrict the Dwork-Frobenius operator to those $A$-hypergeometric series that potentially have $p$-integral coefficients (Theorem~10.5).  This is the key result, and it allows us to give a simple proof of our main result, Theorem~2.21, in Section~11.  As an example, we derive in Section 12 an integrality result for classical hypergeometric series (Theorem 12.15). 

This paper is completely self-contained except for a reference to Dwork \cite{D} for an estimate for the $p$-divisibility of a Pochhammer symbol in Section~5, a reference to an elementary result from \cite{AS} in Section~2, and some references to $p$-adic estimates from \cite[Section 3]{AS1} in Sections~3 and~6.

We note that the solutions $F_u(\Lambda)$ of the $A$-hypergeometric system with parameter $u$ described in Corollary~2.16 and studied elsewhere in this paper are formal solutions, they do not necessarily belong to a Nilsson ring or converge anywhere.

{\bf Notation.}  It is traditional to express the coefficients of hypergeometric series in terms of Pochhammer symbols.  For $z\in{\mathbb C}$ and $l\in{\mathbb Z}$, one sets $(z)_l = \Gamma(z+l)/\Gamma(z)$.  This is well-defined except when $z+l$ is a nonpositive integer and $z$ is a positive integer, due to the poles of the Gamma function.  Otherwise, the functional equation of the Gamma function gives
\[ (z)_l = \begin{cases} 1 & \text{if $l=0$,} \\ z(z+1)\cdots (z+l-1) & \text{if $l>0$,} \\ \displaystyle{\frac{1}{(z-1)(z-2)\cdots(z+l)}} & \text{if $l<0$ and $z\not\in\{1,2,\dots,-l\}$.} \end{cases} \]
Working with $A$-hypergeometric series, we find it more convenient to use the symbol
\[ [z]_l = \begin{cases} 1 & \text{if $l=0$,} \\ \displaystyle \frac{1}{(z+1)(z+2)\cdots (z+l)} &  \text{if $l>0$ and $z\not\in\{-1,-2,\dots,-l\}$,} \\
z(z-1)\cdots(z+l+1) & \text{if $l<0$.} \end{cases} \]
It has the property that for all $l\in{\mathbb Z}$
\begin{equation}
\frac{d}{dt}\bigg([z]_lt^{z+l}\bigg) = [z]_{l-1}t^{z+l-1},
\end{equation}
hence if $z$ is a $p$-integral rational number for some prime number $p$
\begin{equation}
{\rm ord}_p\: [z]_{l_1}\geq {\rm ord}_p\:[z]_{l_2} \text{ if } l_1\leq l_2.
\end{equation}
One checks that $[z]_l$ is defined if and only if $(-z)_{-l}$ is defined, in which case
\begin{equation}
[z]_l = (-1)^l(-z)_{-l}.
\end{equation}

For $N$-tuples $z=(z_1,\dots,z_N)\in{\mathbb C}^N$ and $l=(l_1,\dots,l_N)\in{\mathbb Z}^N$ we extend this definition by setting
\[ [z]_l = \prod_{j=1}^N [z_j]_{l_j}, \]
assuming always that $z_j\not\in\{-1,-2,\dots,-l_j\}$ if $l_j>0$.  

\section{Statement of results}

We develop some theory that will allow us to state the main result.  We begin by describing the hypergeometric series we are considering.  

Let $A = \{{\bf a}_j\}_{j=1}^N$ with ${\bf a}_j = (a_{1j},a_{2j},\dots,a_{nj})\in{\mathbb Z}^n$ for all $j$.  We assume that the ${\bf a}_j$ lie on a hyperplane $w(u)=1$, where $w(u) = \sum_{i=1}^n w_iu_i$ with $w_i\in{\mathbb Q}$ for $i=1,\dots,n$.  The linear form $w$ defines a function $w:{\mathbb R}^n\to {\mathbb R}$; we refer to $w(u)$ as the {\it weight\/} of $u$.  Note that for $u=\sum_{j=1}^N c_j{\bf a}_j$ we have $w(u) = \sum_{j=1}^N c_j$.

Let $L$ be the lattice of relations on the set $A$:
\[ L = \bigg\{ l=(l_1,\dots,l_N)\in{\mathbb Z}^N\mid \sum_{j=1}^N l_j{\bf a}_j = {\bf 0}\bigg\}. \]
For $l\in L$ define the box operator $\Box_l$ by
\begin{equation}
\Box_l = \prod_{l_j>0} \bigg(\frac{\partial}{\partial\Lambda_j}\bigg)^{l_j} - \prod_{l_j<0} \bigg(\frac{\partial}{\partial \Lambda_j}\bigg)^{-l_j}.
\end{equation}
The Euler operators for a parameter $u=(u_1,\dots,u_n)\in {\mathbb C}^n$ are defined by 
\begin{equation}
Z_i = \sum_{j=1}^N a_{ij}\Lambda_j\frac{\partial}{\partial\Lambda_j} - u_i
\end{equation}
for $i=1,\dots,n$.  The {\it $A$-hypergeometric system with parameter $u$\/} is the system of partial differential equations consisting of the box operators $\Box_l$ for $l\in L$ and the $Z_i$ for $i=1,\dots,n$.

We describe the conditions on the parameters that will allow us to prove integrality results.  Let ${\mathbb Z}A\subseteq{\mathbb Z}^n$ be the abelian group generated by $A$, let ${\mathbb Q}A\subseteq{\mathbb Q}^n$ be the rational vector space generated by $A$, let ${\mathbb R}A\subseteq{\mathbb R}^n$ be the real vector space generated by $A$, and let $C(A)\subseteq{\mathbb R}^n$ be the real cone generated by $A$, a cone with vertex at the origin.  Let $Q\in{\mathbb Q}A$ and consider the shifted lattice $Q+{\mathbb Z}A$.  

For a closed face $\sigma$ of the negative cone $-C(A)$ we denote by $\sigma^\circ$ its relative interior, i.~e., $\sigma^\circ$ equals $\sigma$ minus all its proper closed subfaces.  Fix a closed face $\sigma$ for which ${\mathcal M}:=(Q+{\mathbb Z}A)\cap\sigma^\circ$ is nonempty.  Then for each $u\in{\mathcal M}$ the closed face $\sigma$ is the smallest closed face of $-C(A)$ that contains $u$.  Our goal is to construct a family of series parameterized by $u\in{\mathcal M}$ which are formal solutions of the $A$-hypergeometric system with parameter $u$ and which have $p$-integral coefficients for all primes $p$ lying in certain residue classes.  The first step is to choose a distinguished element of ${\mathcal M}$.  

Since ${\mathcal M}$ is discrete and the weight function $w$ is nonpositive on $-C(A)$ we can choose an element $\beta\in{\mathcal M}$ for which
\begin{equation}
w(\beta) = \max\{w(u)\mid u\in{\mathcal M}\}.
\end{equation}
We examine some consequences of this condition.  Since $\beta\in{\mathbb Q}A\cap(-C(A))$ we can write 
\begin{equation}
\beta = \sum_{j=1}^N v_j{\bf a}_j
\end{equation}
 for some $v_j\in{\mathbb Q}_{\leq 0}$.  Put $v=(v_1,\dots,v_N)$.  Note that $-{\bf a}_j\in\sigma$ if $v_j<0$.  If $v_j=0$, then ${\bf a}_j$ may or may not lie on $\sigma$.
\begin{lemma}
Suppose that $\beta$ satisfies $(2.3)$.  Then $-1\leq v_j\leq 0$ for $j=1,\dots,N$.  
\end{lemma}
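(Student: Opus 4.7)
The plan is a proof by contradiction exploiting the weight-maximality condition $(2.3)$. Assume for contradiction that $v_{j_0} < -1$ for some $j_0$, and set $\beta' := \beta + \mathbf{a}_{j_0}$. Since $\mathbf{a}_{j_0} \in \mathbb{Z}A$ has weight one, automatically $\beta' \in Q + \mathbb{Z}A$ and $w(\beta') = w(\beta) + 1 > w(\beta)$, so once I establish $\beta' \in \sigma^\circ$ the membership $\beta' \in \mathcal{M}$ will contradict the maximal choice of $\beta$.

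To set up that verification, rewrite $\beta' = \sum_k v'_k \mathbf{a}_k$ with $v'_k = v_k$ for $k \neq j_0$ and $v'_{j_0} = v_{j_0} + 1$. The strict inequality $v_{j_0} < -1$ gives $v'_{j_0} < 0$, so every $v'_k \leq 0$ and, crucially, the signed-support sets coincide: $\{k : v'_k < 0\} = \{k : v_k < 0\}$. Thus $\beta$ and $\beta'$ are represented as nonnegative combinations of the same generating set $\{-\mathbf{a}_k : v_k < 0\}$.

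The geometric fact I rely on --- and the real content of the argument --- is that for any $y \in -C(A)$ with a representation $y = \sum_k c_k(-\mathbf{a}_k)$, $c_k \geq 0$, the smallest face of $-C(A)$ containing $y$ coincides with the smallest face containing $\{-\mathbf{a}_k : c_k > 0\}$. This is a quick test against any supporting form $\ell \geq 0$ on $-C(A)$ vanishing at $y$: the identity $\sum_k c_k \ell(-\mathbf{a}_k) = 0$ is a sum of nonnegative terms, forcing $\ell(-\mathbf{a}_k) = 0$ whenever $c_k > 0$ --- this is essentially the remark $v_k < 0 \Rightarrow -\mathbf{a}_k \in \sigma$ preceding the lemma. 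Applying it to $\beta$ and $\beta'$, the common support forces their minimal faces to coincide with $\sigma$, so $\beta' \in \sigma^\circ$. The passage from $\beta' \in \sigma$ to $\beta' \in \sigma^\circ$ is the delicate point and is exactly where the strict inequality $v_{j_0} < -1$ is essential: with $v_{j_0} = -1$, the term $-\mathbf{a}_{j_0}$ would drop out of the support of $\beta'$ and $\beta'$ could slip onto a proper subface of $\sigma$, defeating the argument.
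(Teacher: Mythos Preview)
Your proof is correct and follows the same route as the paper's: assume some $v_{j_0}<-1$, shift to $\beta+\mathbf{a}_{j_0}$, and observe that the new coefficient vector has the same negative-support set as the old one, forcing the shifted point into $\sigma^\circ$ and contradicting~(2.3). The only difference is that you spell out the supporting-form argument for why equal negative support implies equal minimal face, whereas the paper simply asserts ``This implies that $\beta+\mathbf{a}_1\in\mathcal{M}$'' and moves on.
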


\begin{proof}
Suppose, for example, that $v_1<-1$.  Then
\[ \beta + {\bf a}_1 = (v_1+1){\bf a}_1 + \sum_{j=2}^N v_j{\bf a}_j. \]
The coefficients on the right-hand side are nonpositive and are strictly negative whenever $v_j$ is strictly negative.  This implies that $\beta+{\bf a}_1\in{\mathcal M}$, but $w(\beta+{\bf a}_1)=w(\beta)+1>w(\beta)$.  This contradicts the choice of $\beta$ satisfying~(2.3).
\end{proof}

\begin{lemma}
Suppose that $\beta$ satisfies $(2.3)$.  If $J$ is a proper subset of the set $\{j\in\{1,\dots,N\}\mid v_j=-1\}$, then
\[ -\sum_{j\in J}{\bf a}_j - \sum_{\{j\mid -1<v_j<0\}} {\bf a}_j\not\in\sigma^\circ. \]
\end{lemma}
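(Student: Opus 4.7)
My plan is to proceed by contradiction along the lines of the proof of Lemma~2.4. Set $K = \{j\mid v_j=-1\}$ and $I = \{j\mid -1<v_j<0\}$, and suppose, to the contrary, that
\[ u_0 := -\sum_{j\in J}\mathbf{a}_j - \sum_{j\in I}\mathbf{a}_j \]
lies in $\sigma^\circ$. From this I will produce an element of $\mathcal{M}$ with weight strictly greater than $w(\beta)$, contradicting~(2.3).

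The candidate is $\beta' := \beta + \sum_{j\in K\setminus J}\mathbf{a}_j$. Expanding via $(2.4)$, this becomes
\[ \beta' = -\sum_{j\in J}\mathbf{a}_j + \sum_{j\in I}v_j\mathbf{a}_j = \sum_{j\in J}(-\mathbf{a}_j) + \sum_{j\in I}|v_j|(-\mathbf{a}_j), \]
a positive combination of the $-\mathbf{a}_j$ lying in $\sigma$ (both for $j\in J\subset K$ and for $j\in I$). Thus $\beta'\in\sigma$; it lies in $Q+\mathbb{Z}A$ because $\beta$ does and we have added an integer combination of the $\mathbf{a}_j$; and $w(\beta')=w(\beta)+|K\setminus J|>w(\beta)$ since $J\subsetneq K$.

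The one nontrivial step is to verify $\beta'\in\sigma^\circ$. Here I will invoke a standard fact about faces of polyhedral cones: for any face $\tau$ of $\sigma$ and $x,y\in\sigma$, if $x+y\in\tau$ then $x,y\in\tau$. Iterating, if a point of $\sigma$ is expressed as a strictly positive combination $\sum_{j\in S}c_j(-\mathbf{a}_j)$, then the smallest face of $\sigma$ containing it is the face of $\sigma$ generated by $\{-\mathbf{a}_j : j\in S\}$, depending on the support $S$ alone. Since $u_0$ and $\beta'$ share the same support $J\cup I$ in their positive-combination expansions, $u_0\in\sigma^\circ$ forces $\beta'\in\sigma^\circ$. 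This makes $\beta'\in\mathcal{M}$ with $w(\beta')>w(\beta)$, contradicting~(2.3).

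The main obstacle is this face-support identification: once one accepts that membership in $\sigma^\circ$ depends only on which $-\mathbf{a}_j$'s appear with nonzero coefficient in a positive expansion, the rest of the argument is a straightforward weight-shifting analogue of the proof of Lemma~2.4.
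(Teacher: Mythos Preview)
Your proof is correct and follows the same strategy as the paper: assume for contradiction that the displayed point lies in $\sigma^\circ$, form $\beta' = \beta + \sum_{j\in K\setminus J}\mathbf{a}_j$, and observe that $\beta'\in\mathcal{M}$ with $w(\beta')>w(\beta)$, contradicting~(2.3). The paper simply asserts $\beta'\in\mathcal{M}$, while you make explicit the one nontrivial point—that $\beta'\in\sigma^\circ$ follows because $u_0$ and $\beta'$ are strictly positive combinations of the same set $\{-\mathbf{a}_j : j\in J\cup I\}\subseteq\sigma$, so the smallest face of $\sigma$ containing either of them is the same.
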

 
 \begin{proof}
Suppose $-\sum_{j\in J}{\bf a}_j - \sum_{\{j\mid -1<v_j<0\}} {\bf a}_j\in\sigma^\circ$ for some proper subset $J\subseteq
\{j\in\{1,\dots,N\}\mid v_j=-1\}$.  Then
\[ \beta + \sum_{\{j\mid v_j=-1,\,j\not\in J\}} {\bf a}_j =-\sum_{j\in J} {\bf a}_j +\sum_{\{j\mid -1<v_j<0\}} v_j{\bf a}_j\in {\mathcal M}. \]
But $\displaystyle w\bigg(\beta +  \sum_{\{j\mid v_j=-1,\,j\not\in J\}} {\bf a}_j\bigg)>w(\beta)$, contradicting (2.3).
\end{proof}

Let 
\begin{align*}
E&=\{(l_1,\dots,l_N)\in{\mathbb Z}^N\mid \text{$l_j\leq 0$ if $v_j=-1$, $l_j\geq 0$ if $v_j=0$}\}, \\
E_+&=\{(l_1,\dots,l_N)\in{\mathbb Z}^N\mid \text{$l_j\geq 0$ if $v_j=0$}\}.
\end{align*}
Let $u\in{\mathcal M}$ and consider an equation of the form
\begin{equation}
u=\sum_{j=1}^N (v_j+l_j){\bf a}_j\;\text{with $(l_1,\dots,l_N)\in E_+$.}
\end{equation}

\begin{lemma}
In Equation $(2.7)$ we have $l_j=0$ if $-{\bf a}_j\not\in\sigma$.
\end{lemma}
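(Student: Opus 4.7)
The plan is to apply a supporting linear functional for the face $\sigma$ of $-C(A)$ to both sides of (2.7) and exploit the resulting positivity. The main inputs will be the remark following (2.4) and the definition of $E_+$.

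First I would observe that if $-{\bf a}_j\not\in\sigma$ then $v_j=0$: this is the contrapositive of the remark after (2.4), which says $v_j<0$ forces $-{\bf a}_j\in\sigma$. Combined with $(l_1,\dots,l_N)\in E_+$, this yields $l_j\geq 0$ for every such $j$.

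Next, using that $\sigma$ is a face of the polyhedral cone $-C(A)$, I would pick a linear form $\phi\colon{\mathbb R}^n\to{\mathbb R}$ with $\phi\leq 0$ on $-C(A)$ and $\sigma=\{x\in -C(A):\phi(x)=0\}$. Equivalently, $\phi({\bf a}_j)\geq 0$ for every $j$, with equality if and only if $-{\bf a}_j\in\sigma$. Since $u\in{\mathcal M}\subseteq\sigma$ one has $\phi(u)=0$, so applying $\phi$ to (2.7) gives
\[ 0=\sum_{j=1}^N (v_j+l_j)\phi({\bf a}_j)=\sum_{\{j\,:\,-{\bf a}_j\not\in\sigma\}} l_j\,\phi({\bf a}_j), \]
because on indices with $-{\bf a}_j\in\sigma$ the factor $\phi({\bf a}_j)$ vanishes, while on the remaining indices $v_j=0$ by the first step. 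Each surviving summand is the product of a nonnegative integer $l_j$ and a strictly positive real $\phi({\bf a}_j)$, so the vanishing of the sum forces $l_j=0$ for every $j$ with $-{\bf a}_j\not\in\sigma$.

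I do not expect a real obstacle: the only non-bookkeeping input is the standard characterization of a face of a rational polyhedral cone as the zero locus on the cone of a nonpositive linear form, together with the strict positivity of that form on every ${\bf a}_j$ with $-{\bf a}_j\not\in\sigma$.
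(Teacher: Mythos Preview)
Your proposal is correct and is essentially the same argument as the paper's own proof: both choose a supporting linear form for the face $\sigma$ of $-C(A)$, observe that $v_j=0$ whenever $-{\bf a}_j\not\in\sigma$, apply the linear form to (2.7), and use positivity of the surviving terms to force $l_j=0$. The only cosmetic difference is the sign convention you use for the supporting functional.
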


\begin{proof}
Let $H\subseteq{\mathbb R}^n$ be a hyperplane of support for the face $\sigma$.  Then $H$ can be defined by a homogeneous linear equation $h=0$ with $h(\sigma)=0$ and $h(x)>0$ for $x\in\big(-C(A)\big)\setminus\sigma$.  In particular, $h({\bf a}_j)=0$ if $v_j\neq 0$.  If $l_j>0$ for some $j$ with $v_j=0$ and $-{\bf a}_j\not\in\sigma$, then (2.7) implies $h(u)>0$, contradicting the hypothesis that $u\in{\mathcal M}$.
\end{proof}

\begin{proposition}
Suppose that $\beta$ satisfies $(2.3)$ and let $u\in{\mathcal M}$ satisfy an equation of the form~$(2.7)$.  Then $l_j\leq 0$ if $v_j=-1$, i.~e., $(l_1,\dots,l_N)\in E$.
\end{proposition}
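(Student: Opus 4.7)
My plan is to argue by contradiction: assume there exists some~$j_0$ with $v_{j_0}=-1$ and $l_{j_0}\geq 1$. Set $M:=\{j:v_j=-1\}$, $J^+:=\{j\in M:l_j\geq 1\}$, and $J:=M\setminus J^+$; since $j_0\in J^+$, the subset $J\subsetneq M$ is proper.

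I first carry out a normalization. For any $j\in J^+$ with $l_j\geq 2$, the element $u-(l_j-1){\bf a}_j=u+(l_j-1)(-{\bf a}_j)$ still lies in $\sigma^\circ$, since $-{\bf a}_j\in\sigma$ (from $v_j=-1<0$, as observed in the setup before Lemma~2.5) and the standard fact $\sigma^\circ+\sigma\subseteq\sigma^\circ$ holds for the closed face~$\sigma$ of $-C(A)$. The reduced element is still in $Q+\mathbb{Z}A$ and admits an $E_+$-representation of the form~(2.7) with the $j$th coordinate dropped from $l_j$ to~$1$. Iterating, I may assume without loss of generality that $l_j=1$ for every $j\in J^+$; then each index in~$J^+$ contributes a coefficient $v_j+l_j=0$, and the given expression collapses to $u=\sum_{j\notin J^+}(v_j+l_j){\bf a}_j$.

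Next I would show $u+{\bf a}_{j_0}\in\mathcal{M}$. This element admits a representation of the form~(2.7) with $j_0$-coordinate $l_{j_0}+1=2$ and all other coordinates unchanged (so it is still in~$E_+$), and the same argument applies to~$u+k{\bf a}_{j_0}$ for every~$k\geq 0$; their weights $w(u)+k$ would eventually exceed $w(\beta)$, contradicting the maximality~(2.3). To place $u+{\bf a}_{j_0}=u-(-{\bf a}_{j_0})$ in~$\sigma^\circ$, I would exhibit a non-negative expansion $u=\sum_{-{\bf a}_j\in\sigma}c_j(-{\bf a}_j)$ with $c_{j_0}\geq 1$ and support that still generates~$\sigma$ after possibly removing~$j_0$; then $(c_{j_0}-1)(-{\bf a}_{j_0})+\sum_{j\neq j_0}c_j(-{\bf a}_j)$ is a non-negative expansion of $u+{\bf a}_{j_0}$ whose support is a face containing $\sigma$.

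The main obstacle I anticipate is producing such an expansion with $c_{j_0}\geq 1$. The given $E_+$-expression rewritten in the $-{\bf a}_j$ basis has coefficient $1-l_{j_0}\leq 0$ on~$-{\bf a}_{j_0}$, so a correction by a relation $r\in L$ is required. Here Lemma~2.6 (applied to $J\cup\{j_0\}\setminus\{j_0\}$-type subsets) is the decisive input: it asserts that the extreme ray carrying~$-{\bf a}_{j_0}$ is not reached by any $-{\bf a}_j$ with $j\in(M\setminus\{j_0\})\cup\{j:-1<v_j<0\}$, so, together with $u\in\sigma^\circ$, any non-negative expansion of~$u$ in the generators of~$\sigma$ must load positive weight onto~$-{\bf a}_{j_0}$ itself. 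A supplementary argument using $l\in E_+$ and the hyperplane-of-support used in the proof of Lemma~2.8 is needed to rule out the indices $j$ with $v_j=0$ from supplying this ray; this is the delicate point of the whole proof.
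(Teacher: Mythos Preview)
Your proposal is incomplete: you yourself identify the crucial step---showing $u+{\bf a}_{j_0}\in\sigma^\circ$---as ``the delicate point of the whole proof,'' and you do not carry it out. The reinterpretation of Lemma~2.6 as a statement about ``the extreme ray carrying $-{\bf a}_{j_0}$'' is not what that lemma says, and the idea of correcting by a lattice relation $r\in L$ to produce an expansion of $u$ with $c_{j_0}\ge 1$ is only gestured at. The normalization step is fine but unnecessary; after it you are no closer to the missing inclusion than before. As written, there is no proof.

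The paper's route avoids the detour through $u+{\bf a}_{j_0}$ entirely. One simply rearranges~(2.7) as
\[
\sum_{\substack{v_j=-1\\ l_j\le 0}}(-1+l_j){\bf a}_j+\sum_{\substack{-1<v_j<0\\ l_j\le 0}}(v_j+l_j){\bf a}_j
\;=\; u-\sum_{\substack{-1\le v_j<0\\ l_j>0}}(v_j+l_j){\bf a}_j-\sum_{v_j=0}l_j{\bf a}_j.
\]
By Lemma~2.8 every ${\bf a}_j$ appearing on the right with nonzero coefficient has $-{\bf a}_j\in\sigma$, so the right-hand side equals $u$ plus a nonnegative combination of elements of~$\sigma$ and hence lies in~$\sigma^\circ$. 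Therefore the left-hand side lies in~$\sigma^\circ$. Now suppose some $j$ with $v_j=-1$ has $l_j>0$; then $J:=\{j:v_j=-1,\ l_j\le 0\}$ is a proper subset of $\{j:v_j=-1\}$, and Lemma~2.6 says the point $-\sum_{j\in J}{\bf a}_j-\sum_{-1<v_j<0}{\bf a}_j$ is not in~$\sigma^\circ$. Since that point is a strictly positive combination of the $-{\bf a}_j$ involved (all of which lie in~$\sigma$), it sits on a proper face $\rho\subsetneq\sigma$, and the supporting-hyperplane description of~$\rho$ forces each of those $-{\bf a}_j$ into~$\rho$. The left-hand side above is a combination of a subset of these same $-{\bf a}_j$, so it too lies in~$\rho$---contradicting its membership in~$\sigma^\circ$. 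No iteration and no search for an alternative expansion of~$u$ is needed.
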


\begin{proof}
Rewrite (2.7) as
\begin{multline}
\sum_{\{j\mid v_j=-1,\,l_j\leq 0\}} (-1+l_j){\bf a}_j + \sum_{\{j\mid -1<v_j<0,\,l_j\leq 0\}}(v_j+l_j){\bf a}_j \\ 
=  u-\sum_{\{j\mid -1\leq v_j<0,\,l_j>0\}}(v_j+l_j){\bf a}_j - \sum_{\{j\mid v_j=0\}} l_j{\bf a}_j.
\end{multline}
Since $u\in{\mathcal M}$ and $l_j=0$ if $-{\bf a}_j\not\in\sigma$ by Lemma 2.8, the right-hand side of (2.10) lies in $\sigma^\circ$.  This implies that
\[ \sum_{\{j\mid v_j=-1,\,l_j\leq 0\}} (-1+l_j){\bf a}_j + \sum_{\{j\mid -1<v_j<0,\,l_j\leq 0\}}(v_j+l_j){\bf a}_j \in\sigma^\circ. \]
By Lemma 2.6 this would be a contradiction unless $l_j\leq 0$ for all $j$ with $v_j=-1$.  
\end{proof}

In the special case $u=\beta$, Proposition 2.9 reduces to the assertion that $v$ have minimal negative support, guaranteeing that there is an associated logarithm-free solution of the $A$-hypergeometric system with parameter $\beta$.  We show next that in fact we get logarithm-free solutions for all $u\in{\mathcal M}$ when $\beta$ satisfies (2.3) (Corollary~2.16 below).

For $u\in Q+{\mathbb Z}A$ put
\[ E(u) = \bigg\{ l=(l_1,\dots,l_N)\in E\mid \sum_{j=1}^N (v_j+l_j){\bf a}_j = u\bigg\} \]
where the $v_j$ are as in (2.4) and define
\begin{equation}
F_u(\Lambda_1,\dots,\Lambda_N) = \sum_{l\in E(u)} [v]_l\Lambda^{v+l},
\end{equation}
where we write $\Lambda^{v+l}$ for $\prod_{j=1}^N \Lambda_j^{v_j+l_j}$.  Note that the restriction to $l\in E(u)$ in (2.11) guarantees that $[v]_l$ is well-defined.

Put
\[ E_{\sigma} = \{ l=(l_1,\dots,l_N)\in E\mid\text{$l_j=0$ if $-{\bf a}_j\not\in\sigma$}\} \]
and 
\[ E_{\sigma}(u) = \bigg\{ l=(l_1,\dots,l_N)\in E_{\sigma}\mid \sum_{j=1}^N (v_j+l_j){\bf a}_j = u\bigg\}. \]
For $u\in{\mathcal M}$ we have by Lemma 2.8 the sharper formula
\begin{equation}
F_u(\Lambda_1,\dots,\Lambda_N) = \sum_{l\in E_{\sigma}(u)} [v]_l\Lambda^{v+l},
\end{equation}

\begin{lemma}
Suppose that $\beta$ satisfies $(2.3)$ and let $u\in{\mathcal M}$.  Then
\[ \frac{\partial}{\partial \Lambda_k} F_u(\Lambda) = \begin{cases} F_{u-{\bf a}_k}(\Lambda) & \text{if ${\bf a}_k\in\sigma$,} \\ 0 & \text{if ${\bf a}_k\not\in\sigma$.} \end{cases} \]
\end{lemma}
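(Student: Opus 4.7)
The plan is to differentiate (2.12) term by term, apply the identity from (1.1), reindex the sum, and compare with the expansion of $F_{u-{\bf a}_k}$. Applied in the $k$-th coordinate, (1.1) yields the algebraic identity $[v]_l(v_k+l_k)=[v]_{l-e_k}$ whenever $[v]_l$ is defined (where $e_k$ denotes the $k$-th standard basis vector in ${\mathbb Z}^N$). Termwise differentiation of (2.12) followed by the substitution $m=l-e_k$ therefore gives
\[ \frac{\partial F_u}{\partial\Lambda_k} = \sum_{l\in E_\sigma(u)}[v]_l(v_k+l_k)\Lambda^{v+l-e_k} = \sum_{m+e_k\in E_\sigma(u)}[v]_m\Lambda^{v+m}. \]
The equation $\sum_j(v_j+l_j){\bf a}_j=u$ with $l=m+e_k$ is equivalent to $\sum_j(v_j+m_j){\bf a}_j=u-{\bf a}_k$, and the $E_\sigma$-conditions on $m_j$ for $j\neq k$ coincide with those defining $E_\sigma(u-{\bf a}_k)$. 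Matching the derivative with $F_{u-{\bf a}_k}$ therefore reduces to reconciling the constraints at the single index $j=k$.

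In the case $-{\bf a}_k\notin\sigma$ (which we read as the hypothesis ``${\bf a}_k\notin\sigma$'' of the lemma), every $l\in E_\sigma(u)$ has $l_k=0$ by the definition of $E_\sigma$, while $v_k<0$ would force $-{\bf a}_k\in\sigma$ by the observation preceding Lemma 2.5, so $v_k=0$. Every summand then carries the vanishing factor $(v_k+l_k)=0$, giving $\partial F_u/\partial\Lambda_k=0$.

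Now assume $-{\bf a}_k\in\sigma$. If $-1<v_k<0$, the $E$-conditions at $j=k$ are vacuous on both sides and the index sets agree. If $v_k=0$, the reindexed derivative side admits $m_k\geq-1$ while $E_\sigma(u-{\bf a}_k)$ requires $m_k\geq0$; the surplus term at $m_k=-1$ corresponds to $l=m+e_k$ with $l_k=0$, and its coefficient $(v_k+l_k)=0$ vanishes. If $v_k=-1$, the reindexed derivative side restricts to $m_k\leq-1$ while $E_\sigma(u-{\bf a}_k)$ allows $m_k=0$; to show no such ``extra'' term is actually present, let $m'=m+e_k$, so $m'_k=1$ and $u=\sum_j(v_j+m'_j){\bf a}_j$. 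Since $v_k=-1$ imposes no $E_+$-constraint at $j=k$, we have $m'\in E_+$, and Proposition 2.9 then forces $m'_k\leq 0$, contradicting $m'_k=1$. Hence no such $m$ appears in $E_\sigma(u-{\bf a}_k)$, and $\partial F_u/\partial\Lambda_k=F_{u-{\bf a}_k}$.

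The main obstacle is the bookkeeping at $j=k$ after reindexing: the natural index set $\{m\mid m+e_k\in E_\sigma(u)\}$ differs from $E_\sigma(u-{\bf a}_k)$ at the boundary. Proposition 2.9 is precisely what eliminates the spurious boundary term when $v_k=-1$, while the vanishing of $(v_k+l_k)$ at $l_k=0$ absorbs the boundary discrepancy when $v_k=0$.
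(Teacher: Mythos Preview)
Your proof is correct and follows essentially the same route as the paper. Both arguments differentiate (2.12) term by term, observe that the resulting index set differs from $E_\sigma(u-\mathbf{a}_k)$ only at the coordinate $j=k$, and invoke Proposition~2.9 to handle the boundary case $v_k=-1$. The only organizational difference is that the paper phrases the comparison as two inclusions (every differentiated term lands in $F_{u-\mathbf{a}_k}$, and every term of $F_{u-\mathbf{a}_k}$ is hit), whereas you reindex first and then reconcile the index sets directly; your version is slightly more explicit about the $v_k=0$ boundary but otherwise identical in substance. One small point worth noting (tacit in both your argument and the paper's) is that $u-\mathbf{a}_k\in\mathcal{M}$ when $-\mathbf{a}_k\in\sigma$, so that the expansion (2.12) applies to $F_{u-\mathbf{a}_k}$; this follows from the standard fact that the relative interior of a cone is stable under addition of elements of the cone.
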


\begin{proof}
If ${\bf a}_k\not\in\sigma$, then $\Lambda_k$ does not occur to a nonzero power in (2.12), so $\partial F_u/\partial\Lambda_k = 0$.  A straightforward calculation from (2.12) shows that applying $\partial/\partial\Lambda_k$ to a term in $F_u(\Lambda)$ gives either 0 or a term of $F_{u-{\bf a}_k}(\Lambda)$.  The main point of the proof is to show that every monomial in $F_{u-{\bf a}_k}(\Lambda)$ is obtained by applying $\partial/\partial\Lambda_k$ to some monomial in $F_u(\Lambda)$.  

Suppose that ${\bf a}_k\in\sigma$ and consider a monomial $[v]_l\Lambda^{v+l}$ in $F_{u-{\bf a}_k}(\Lambda)$.  Then $l\in E_{\sigma}(u-{\bf a}_k)$ and
\begin{equation}
\sum_{j=1}^N (v_j+l_j){\bf a}_j = u-{\bf a}_k.
\end{equation}
Suppose first that $v_k=-1$.  Then (2.14) gives
\begin{equation}
u = (v_k + l_k + 1){\bf a}_k + \sum_{\substack{j=1\\ j\neq k}}^{N} (v_j+l_j){\bf a}_j. 
\end{equation}
By Proposition 2.9 we have $l_k+1\leq 0$, so if we put 
\[ l'=(l_1,\dots,l_{k-1},l_k+1,l_{k+1},\ldots,l_N) \]
 then $l'\in E_\sigma$ and $\sum_{j=1}^N (v_j + l'_j){\bf a}_j = u$ by (2.15), i.~e., $l'\in E_\sigma( u)$. Thus the monomial $[v]_{l'}\Lambda^{v+l'}$ appears in the series (2.12) and applying $\partial/\partial\Lambda_k$ gives $[v]_l\Lambda^{v+l}$.  Next suppose that $-1<v_k\leq 0$.   If we again define $l'=(l_1,\dots,l_{k-1},l_k+1,l_{k+1},\ldots,l_N)$, then $l'\in E_\sigma(u)$ and the monomial $[v]_{l'}\Lambda^{v+l'}$ appears in the series (2.12).  Applying $\partial/\partial\Lambda_k$ gives $[v]_l\Lambda^{v+l}$.
\end{proof}

\begin{corollary}
Suppose that $\beta$ satisfies $(2.3)$ and $u\in{\mathcal M}$.  Then $F_u(\Lambda)$ satisfies the $A$-hypergeometric system with parameter $u$.
\end{corollary}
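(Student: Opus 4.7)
The plan is to verify that $F_u$ is annihilated by each generator of the $A$-hypergeometric system with parameter $u$, namely every Euler operator $Z_i$ and every box operator $\Box_l$.

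For the Euler operators, I would simply apply $Z_i$ term-by-term to the expansion (2.12). Every index $l\in E_\sigma(u)$ satisfies $\sum_{j=1}^N (v_j+l_j){\bf a}_j = u$; reading off the $i$-th coordinate of this vector identity gives $\sum_{j=1}^N a_{ij}(v_j+l_j) = u_i$, so $Z_i\Lambda^{v+l}=0$ for every monomial. Hence $Z_iF_u=0$.

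For the box operators, fix $l\in L$ and evaluate the two factors of $\Box_l$ by iterating Lemma 2.13. When each variable entering a given product satisfies the face hypothesis of Lemma 2.13, that product equals $F_{u'}$ for a shifted parameter; since each $\partial/\partial\Lambda_j$ shifts the parameter by $-{\bf a}_j$, the positive half sends $F_u$ to $F_{u-\sum_{l_j>0}l_j{\bf a}_j}$ and the negative half sends $F_u$ to $F_{u+\sum_{l_j<0}l_j{\bf a}_j}$. The relation $\sum_{j=1}^N l_j{\bf a}_j=0$ forces these two shifted parameters to agree, so both halves produce the same series and their difference is zero.

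The one subtlety, and the main obstacle, is the case in which one half annihilates $F_u$ because some ${\bf a}_j$ with $l_j$ of the relevant sign violates the face hypothesis: we need the other half to vanish too. To handle this, take a supporting hyperplane $h$ for the face $\sigma$ as in the proof of Lemma 2.8. Then $h({\bf a}_j)$ is nonpositive for every $j$ and vanishes precisely when $-{\bf a}_j\in\sigma$. Pairing $h$ with the lattice relation gives
\[ \sum_{l_j>0} l_j\,h({\bf a}_j) = \sum_{l_j<0}(-l_j)\,h({\bf a}_j), \]
with all summands on both sides nonpositive, so the positive half contains an offending index if and only if the negative half does. Either both products agree and cancel, or both vanish, and in every case $\Box_l F_u=0$.
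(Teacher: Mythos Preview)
Your proof is correct and follows essentially the same route as the paper's: verify the Euler operators term-by-term, then treat the box operators by iterating Lemma~2.13, splitting into the case where all relevant ${\bf a}_j$ lie on $\sigma$ (so both halves of $\Box_l$ produce the same $F_{u'}$) and the case where some do not (so both halves vanish). The paper asserts the ``both-or-neither'' dichotomy directly from the face relation~(2.17), while you make the mechanism explicit via the supporting functional $h$; the two arguments are equivalent. One small point left implicit in both your write-up and the paper's: to iterate Lemma~2.13 you need each intermediate parameter $u-{\bf a}_{j_1}-\cdots-{\bf a}_{j_s}$ to remain in ${\mathcal M}$, which holds because adding an element $-{\bf a}_j\in\sigma$ to a point of $\sigma^\circ$ keeps you in $\sigma^\circ$.
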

 
 \begin{proof}
It follows from the condition $\sum_{j=1}^N (v_j+l_j){\bf a}_j = u$ on the sum in (2.12) that each monomial $\Lambda^{v+l}$ satisfies the operators $Z_i$ for the parameter $u$.  Let $l=(l_1,\dots,l_N)\in L$.  Then
\begin{equation}
\sum_{l_j>0} l_j{\bf a}_j = -\sum_{l_j<0} l_j{\bf a}_j.
\end{equation}
This implies that $l_j>0$ for some ${\bf a}_j\not\in\sigma$ if and only if $l_{j'}<0$ for some ${\bf a}_{j'}\not\in\sigma$.  In this case, by Lemma~2.13,
\[ \prod_{l_j>0} \bigg(\frac{\partial}{\partial\Lambda_j}\bigg)^{l_j}F_u(\Lambda) =  \prod_{l_j<0} \bigg(\frac{\partial}{\partial\Lambda_j}\bigg)^{-l_j}F_u(\Lambda) = 0 \]
so $\Box_l F_u(\Lambda) = 0$.  Otherwise, Lemma 2.13 implies that
\[ \prod_{l_j>0} \bigg(\frac{\partial}{\partial\Lambda_j}\bigg)^{l_j}F_u(\Lambda) = F_{u-\sum_{l_j>0} l_j{\bf a}_j}(\Lambda) \]
and
\[ \prod_{l_j<0} \bigg(\frac{\partial}{\partial\Lambda_j}\bigg)^{-l_j}F_u(\Lambda) = F_{u+\sum_{l_j<0} l_j{\bf a}_j}(\Lambda). \]
These two expressions are equal by (2.17), so again $\Box_l F_u(\Lambda) = 0$.  
\end{proof}

Let $v=(v_1,\dots,v_N)$ be as in (2.4).  Choose a positive integer $D$ such that $Dv_j\in{\mathbb Z}$ for all $j$ and choose a positive integer $h$ such that $(h,D)=1$.  For each $j$ there exists a unique rational number $v'_j$, $-1\leq v'_j\leq 0$, with $Dv'_j\in{\mathbb Z}$ and $hv_j-v'_j\in \{0,-1,\dots,-(h-1)\}$.  Furthermore, $v'_j$ depends only on the residue class of $h\pmod{D}$: if $h\equiv \tilde{h}\pmod{D}$ then both $h$ and $\tilde{h}$ define the same map $v_j\to v'_j$ (see \cite[Section~3]{AS}).  We denote the $i$-fold iteration of this map by $v_j\to v_j^{(i)}$, i.~e., $v_j^{(i)} = (v_j^{(i-1)})'$.  Since $(h,D)=1$, there exists a positive integer $a$ such that for all $j$ one has $v_j^{(a)} = v_j$.  We set $v^{(i)} = (v_1^{(i)},\dots,v_N^{(i)})$.  

For each $i$ we define $\beta^{(i)} = \sum_{j=1}^N v_j^{(i)}{\bf a}_j$.  Note that $\beta^{(i)}\in h^iQ + {\mathbb Z}A$.  Note also that if $v_j=-1$ then $v_j'=-1$ and if $v_j=0$ then $v_j'=0$.  If $-1<v_j<0$, then $-1<v_j'<0$.  It follows that $\sigma$, the smallest closed face of $-C(A)$ containing $\beta$, is also the smallest closed face of $-C(A)$ containing $\beta^{(i)}$ for all $i$.  Let ${\mathcal M}^{(i)} = (h^iQ + {\mathbb Z}A)\cap\sigma^\circ$.  

For $u^{(i)}\in h^iQ+{\mathbb Z}A$ we define as in (2.11)
\begin{equation}
F_{u^{(i)}}(\Lambda_1,\dots,\Lambda_N) = \sum_{l\in E(u^{(i)})}[v^{(i)}]_l\Lambda^{v^{(i)}+l}.
\end{equation}
And as in (2.12) the sharper formula
\begin{equation}
F_{u^{(i)}}(\Lambda_1,\dots,\Lambda_N) = \sum_{l\in E_\sigma(u^{(i)})}[v^{(i)}]_l\Lambda^{v^{(i)}+l}
\end{equation}
holds for $u^{(i)}\in {\mathcal M}^{(i)}$.  

Consider the condition analogous to (2.3):
\begin{equation}
w(\beta^{(i)}) = \max\{w(u^{(i)})\mid u^{(i)}\in{\mathcal M}^{(i)}\}.
\end{equation}
By Corollary 2.16, this condition implies that for all $u^{(i)}\in{\mathcal M}^{(i)}$ the series $F_{u^{(i)}}(\Lambda)$ satisfies the $A$-hypergeometric system with parameter $u^{(i)}$.

The following theorem is the main result of this paper.
\begin{theorem}
Suppose that $(2.20)$ holds for $i=0,1,\dots,a-1$.  Then for $i=0,1,\dots,a-1$ and all $u^{(i)}\in{\mathcal M}^{(i)}$, the hypergeometric series $F_{u^{(i)}}(\Lambda)$ has $p$-integral coefficients for all primes $p\equiv h\pmod{D}$.
\end{theorem}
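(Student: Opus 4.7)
My plan is to deduce Theorem~2.21 as a short consequence of the structural results assembled through Sections~3--10, following the Dwork--Boyarsky strategy outlined in the introduction.  The key idea is to view the family $\{F_{u^{(i)}}(\Lambda)\}$ as an ``eigenvector-like'' object for the Dwork--Frobenius operator $\alpha$, so that iterating $\alpha$ cycles the index $i\mapsto i+1$, and then to read off $p$-integrality of the coefficients directly from the $p$-integrality of the matrix of $\alpha$ on a suitable restricted subspace.

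The first input is Theorem~9.13, which expresses the action of $\alpha$ on an appropriate $p$-adic incarnation of $F_{u^{(i+1)}}(\Lambda)$ as a multiple of the corresponding object built from $F_{u^{(i)}}(\Lambda)$, the multiplier being controlled by the analogue of the $p$-adic Gamma function constructed in Section~6.  The second input is Theorem~10.5, which states that under the maximality condition~(2.20) the operator $\alpha$ genuinely restricts to the subspace spanned by the series indexed by $\mathcal{M}^{(i)}$, and that the entries of this restricted matrix are $p$-adically integral.  The hypothesis~(2.20) is indispensable here: without it, expanding $\alpha(F_{u^{(i+1)}})$ in the basis $\{F_{u^{(i)}}\}_{u^{(i)}\in\mathcal{M}^{(i)}}$ could introduce normalizing factors of negative $p$-adic valuation coming from summands of strictly lower weight, and the lattice-theoretic arguments of Lemma~2.5, Lemma~2.6 and Proposition~2.9, combined with~(2.20), are exactly what excludes such contributions.

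Given these two theorems, the proof itself is short.  Because $v^{(a)} = v$, applying Theorem~10.5 in succession for $i=0,1,\dots,a-1$ closes the orbit and expresses each coefficient of each $F_{u^{(i)}}(\Lambda)$ as a sum of products of matrix entries of $\alpha$ at the intermediate stages together with the scalar factors furnished by Theorem~9.13; all of these are $p$-integral for every prime $p \equiv h \pmod{D}$.  This forces the coefficients of $F_{u^{(i)}}(\Lambda)$ themselves to be $p$-integral, as asserted.  I expect the main obstacle to lie entirely upstream in establishing Theorem~10.5 (in particular, showing that the restriction really does land in the desired subspace with integral entries); once that is in hand, the argument of Section~11 is essentially bookkeeping over the cycle $0\to 1\to\cdots\to a-1\to 0$.
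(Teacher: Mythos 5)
Your overall architecture matches the paper's: reduce to the truncated Frobenius relation of Theorem~10.5 (valid for each $i$ under hypothesis~(2.20)), use the fact that the scalar $\prod_j H(-v_j^{(i)})/[v_j^{(i+1)}]_{pv_j^{(i)}-v_j^{(i+1)}}$ is a $p$-adic unit, and exploit the closed orbit $v^{(a)}=v$. You also correctly identify why (2.20) is indispensable. However, there is a genuine gap in your final step, and it is exactly where the real work of Section~11 lies. The relation furnished by Theorem~10.5 reads, after comparing coefficients of $x^{u^{(i+1)}}$ and clearing the unit, as a recursion of the form
\begin{equation*}
G_{u^{(i+1)}}(\Lambda) \;=\; (\text{unit})\sum_{\nu+pu^{(i)}=u^{(i+1)}}\pi_0^{(p-1)w(\beta^{(i)}-u^{(i)})}\,\tilde{\theta}_\nu(\Lambda)\,G_{u^{(i)}}(\Lambda^p),
\end{equation*}
where the $G_{u^{(i)}}$ on the right are themselves unknown infinite series. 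Going once around the cycle $i=0,\dots,a-1$ does \emph{not} express the coefficients of $F_{u^{(i)}}$ (equivalently of $G_{u^{(i)}}$, via Proposition~3.19) as finite sums of products of $p$-integral matrix entries: it expresses $G_{u^{(0)}}(\Lambda)$ in terms of $G_{u^{(0)}}(\Lambda^{p^a})$, a self-referential identity from which integrality does not follow by mere ``bookkeeping.''

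What the paper actually does, and what your plan omits, is a well-founded induction on the quantity $d(v^{(i)}+l)=\sum_{-1<v_j^{(i)}\le 0}\max\{0,v_j^{(i)}+l_j\}$ attached to each monomial $\Lambda^{v^{(i)}+l}$. The base case $d=0$ (all relevant $l_j\le 0$) is handled directly from the definition of $[z]_l$ together with the comparison ${\rm ord}\,g(v_j^{(i)},l_j)={\rm ord}\,[v_j^{(i)}]_{l_j}$. The inductive step uses the crucial feature that the right-hand side of the recursion involves $G_{u^{(i)}}(\Lambda^p)$ rather than $G_{u^{(i)}}(\Lambda)$: only monomials with $d(v^{(i)}+l)\le d(v^{(i+1)}+l')/p$ can contribute to the coefficient of $\Lambda^{v^{(i+1)}+l'}$, so the induction terminates. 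Without isolating this contraction-by-$p$ mechanism and the base case, your argument does not close; the claim that ``this forces the coefficients to be $p$-integral'' is asserted rather than proved. You should also state explicitly the reduction from the $F_{u^{(i)}}$ to the $G_{u^{(i)}}$ (Proposition~3.19), since Theorems~9.13 and~10.5 are statements about the latter.
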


When $h\equiv 1\pmod{D}$ we have $a=1$, so Theorem 2.21 gives the following corollary.
\begin{corollary}
If $\beta$ satisfies $(2.3)$, then for all $u\in{\mathcal M}$ the hypergeometric series $F_u(\Lambda)$ has $p$-integral coefficients for all primes $p\equiv 1\pmod{D}$.
\end{corollary}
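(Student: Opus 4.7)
The plan is to deduce this corollary directly from Theorem 2.21 by observing that the hypothesis $h\equiv 1\pmod D$ forces the iteration of the map $v_j\mapsto v'_j$ to be trivial, so that only the case $i=0$ of (2.20) needs to be checked, and this is precisely the given condition (2.3).

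First I would verify that $a=1$ when $h\equiv 1\pmod D$. The map $v_j\mapsto v'_j$ defined just before (2.18) depends only on $h\bmod D$; this is the residue-class invariance cited to \cite[Section~3]{AS}. Taking the simplest representative $h=1$, the defining condition $hv_j-v'_j\in\{0,-1,\dots,-(h-1)\}=\{0\}$, combined with the constraints $-1\leq v'_j\leq 0$ and $Dv'_j\in\mathbb{Z}$, forces $v'_j=v_j$ for every $j$. By the residue-class invariance, the same identity holds for any $h$ with $h\equiv 1\pmod D$. Consequently $v^{(i)}=v$ for all $i\geq 0$, and the smallest positive integer $a$ with $v^{(a)}=v$ is $a=1$.

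With $a=1$, the hypothesis ``$(2.20)$ holds for $i=0,1,\dots,a-1$'' collapses to the single statement ``$(2.20)$ holds for $i=0$.'' Since $v^{(0)}=v$, we obtain $\beta^{(0)}=\sum_{j=1}^N v_j^{(0)}{\bf a}_j=\beta$ and $\mathcal{M}^{(0)}=(Q+\mathbb{Z}A)\cap\sigma^\circ=\mathcal{M}$, so condition (2.20) at $i=0$ reads exactly as $w(\beta)=\max\{w(u)\mid u\in\mathcal{M}\}$, which is the assumption (2.3). Theorem 2.21 therefore applies and yields the $p$-integrality of $F_{u^{(0)}}(\Lambda)=F_u(\Lambda)$ for every $u\in\mathcal{M}$ and every prime $p\equiv h\equiv 1\pmod D$, which is the stated conclusion.

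There is no substantive obstacle: Corollary 2.22 is a tautological specialization of Theorem 2.21 to the case in which the orbit of $v$ under iteration has length one. The one point to state with care is the residue-class invariance of the map $v\mapsto v'$, because it is what legitimizes replacing ``$h\equiv 1\pmod D$'' by the concrete representative ``$h=1$'' when computing the iterate.
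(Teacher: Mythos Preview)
Your argument is correct and is precisely the paper's own approach: the paper simply remarks that $h\equiv 1\pmod{D}$ gives $a=1$, so Theorem~2.21 specializes immediately to the corollary. Your added verification that $v'_j=v_j$ (by taking the representative $h=1$ and noting $\{0,-1,\dots,-(h-1)\}=\{0\}$) makes explicit what the paper leaves to the reader.
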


{\bf Remark.}  By Lemma 2.8 one has $l_j=0$ in (2.7) for all $u\in{\mathcal M}$ if $-{\bf a}_j\not\in\sigma$ (in which case $v_j=0$), thus the corresponding variable $\Lambda_j$ does not appear in the series $F_u(\Lambda)$.  One can therefore replace the original set $A$ by the set $A':=\{{\bf a}_j\mid {\bf a}_j\in\sigma\}$, and one then has ${\mathcal M} = (Q+{\mathbb Z}A')\cap(-C(A')^\circ)$.  It thus suffices to prove Theorem~2.21 in the case where $\sigma = -C(A)$ and ${\mathcal M} = (Q+{\mathbb Z}A)\cap (-C(A)^\circ)$.  

The main application we give, in Section 12, is a condition for the $p$-integrality of classical hypergeometric series of the form
\begin{equation}
\sum_{s_1,\dots,s_m=0}^\infty (\theta_1)_{C_1(s)}\cdots(\theta_n)_{C_n(s)}\frac{t_1^{s_1}\cdots t_m^{s_m}}{s_1!\cdots s_m!},
\end{equation}
where $C_1,\dots,C_n$ are homogeneous linear forms with integral coefficients and $\theta_1,\dots,\theta_n$ are $p$-integral rational numbers in the interval $[0,1]$.  

\section{Generating series}

The proof of Theorem 2.21 will require several steps.  We begin with some notation.  Fix for the remainder of this article the vector $Q\in{\mathbb Q}A$, a vector $\beta\in (Q+{\mathbb Z}A)\cap(-C(A)^\circ)$, and a rational vector $v=(v_1,\dots,v_N)\in{\mathbb Q}^N$ whose coordinates lie in the interval $[-1,0]$ and such that $\beta = \sum_{j=1}^N v_j{\bf a}_j$.  We thus have ${\mathcal M} = (Q + {\mathbb Z}A)\cap (-C(A)^\circ)$ and by (2.11) 
\begin{equation}
F_u(\Lambda) = \sum_{l\in E(u)}[v]_l\Lambda^{v+l}
\end{equation}
for $u\in Q+{\mathbb Z}A$.  We do not assume (2.3) or (2.20) until Section~10.  

We also fix a prime number $p$ for which all $v_j$ are $p$-integral.  Let ${\mathbb Z}_p$ be the $p$-adic integers, and ${\mathbb Q}_p$ the field of $p$-adic numbers.  We denote by ${\mathbb C}_p$ the completion of an algebraic closure of~${\mathbb Q}_p$.  The norm on ${\mathbb C}_p$ is denoted by $\lvert\cdot\rvert$ and is normalized by the condition $\lvert p\rvert = 1/p$.  The corresponding additive valuation is denoted ${\rm ord}$ and satisfies ${\rm ord}\:p = 1$.  We denote by ${\mathbb N}$ the nonnegative integers.

It will be useful to have a generating series construction for the $F_u(\Lambda)$.  For $-1<v_j\leq 0$, define functions of one variable
\begin{equation}
 f_j(t) = \sum_{l=-\infty}^\infty [v_j]_lt^{v_j+l}. 
 \end{equation}
Note that in the special case $v_j=0$ this simplifies to $f_j(t) = \exp t$.  For $v_j=-1$, we define
\begin{equation}
 f_j(t) = \sum_{l=-\infty}^{0} [v_j]_lt^{v_j+l} = \sum_{l=0}^\infty (-1)^l l!\, t^{-1-l}. 
 \end{equation}
The following proposition is a straightforward calculation from (3.1).

\begin{proposition}
We have
\[ \prod_{j=1}^N f_j(\Lambda_jx^{{\bf a}_j}) = \sum_{u\in Q + {\mathbb Z}A} F_u(\Lambda)x^u. \]
\end{proposition}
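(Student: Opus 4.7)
The proof is a direct formal computation, so my plan is simply to expand the product on the left-hand side, collect coefficients of powers of $x$, and recognize the result.

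First I would substitute $t = \Lambda_j x^{{\bf a}_j}$ into the definitions (3.2) and (3.3), obtaining
\[ f_j(\Lambda_j x^{{\bf a}_j}) = \sum_{l_j} [v_j]_{l_j}\, \Lambda_j^{v_j+l_j}\, x^{(v_j+l_j){\bf a}_j}, \]
where $l_j$ ranges over all of ${\mathbb Z}$ if $-1 < v_j \leq 0$ and over ${\mathbb Z}_{\leq 0}$ if $v_j = -1$. Taking the product over $j$ and collecting exponents gives
\[ \prod_{j=1}^N f_j(\Lambda_j x^{{\bf a}_j}) = \sum_{l=(l_1,\dots,l_N)} [v]_l \,\Lambda^{v+l}\, x^{\sum_{j=1}^N (v_j+l_j){\bf a}_j}, \]
where the allowable tuples $l$ form exactly the set $E$ defined in Section 2.

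Next I would reindex by the exponent of $x$. Setting $u = \sum_{j=1}^N (v_j+l_j){\bf a}_j = \beta + \sum_{j=1}^N l_j {\bf a}_j$, the vector $u$ ranges over $\beta + {\mathbb Z}A = Q + {\mathbb Z}A$ (using that $\beta \in Q + {\mathbb Z}A$ by hypothesis). For fixed $u$, the tuples $l$ contributing to the coefficient of $x^u$ are precisely those in $E(u)$, so the coefficient of $x^u$ is
\[ \sum_{l\in E(u)} [v]_l \Lambda^{v+l} = F_u(\Lambda), \]
by the definition (3.1). Summing over $u$ yields the claimed identity.

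The only points that need a moment of care are that the index set in the expansion of the product matches $E$ exactly (which requires remembering that when $v_j = -1$ the one-variable series $f_j$ is truncated to $l_j \leq 0$, matching the constraint in the definition of $E$) and that rearranging the multiple sum into a sum over $u \in Q + {\mathbb Z}A$ is legitimate at the formal level, since for each $u$ only finitely many summands may contribute (and in any case the identity is one of formal power series in $x$ with coefficients in the formal Laurent monomials in the $\Lambda_j$). I do not expect any real obstacle.
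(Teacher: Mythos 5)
Your computation is correct and is exactly the ``straightforward calculation from (3.1)'' that the paper itself offers in lieu of a written proof. The only micro-point worth noting is that for $v_j=0$ your index set is formally all of ${\mathbb Z}$ rather than ${\mathbb Z}_{\geq 0}$ as required by $E$, but the extra terms vanish since $[0]_{l_j}=0$ for $l_j<0$ (this is why the paper remarks that $f_j(t)=\exp t$ in that case), so the identity stands as you argue.
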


We need to modify this formula to obtain related series with equivalent integrality properties that are easier to study $p$-adically.
Let $\pi_0\in{\mathbb C}_p$ be a root of the equation $\sum_{i=0}^\infty t^{p^i}/{p^i}=0$ satisfying ${\rm ord}\:\pi_0 = 1/(p-1)$.  
We set
\begin{equation} \xi_j(t) = f_j(\pi_0 t) = \begin{cases} \displaystyle \sum_{l=-\infty}^\infty [v_j]_l(\pi_0 t)^{v_j+l} & \text{if $v_j>-1$,} \\ \displaystyle \sum_{l=0}^\infty (-1)^ll!(\pi_0 t)^{-1-l} & \text{if $v_j=-1$,} \end{cases} \end{equation}
so from Proposition 3.4 we have
 \begin{equation}
 \prod_{j=1}^N \xi_j(\Lambda_jx^{{\bf a}_j}) = \sum_{u\in Q + {\mathbb Z}A} F_u(\Lambda)\pi_0^{w(u)}x^u. 
 \end{equation}
The $\xi_j(t)$ do not lead to sufficiently strong $p$-adic estimates, so we introduce some related series that lead to better estimates.  

Let ${\rm AH}(t) = \exp(\sum_{i=0}^\infty t^{p^i}/p^i)$ be the Artin-Hasse series, a power series in $t$ with $p$-integral coefficients.  Put $\theta(t) = {\rm AH}(\pi_0 t)$.  If we write $\theta(t) = \sum_{i=0}^\infty \theta_it^i$, then
\begin{equation}
{\rm ord}\:\theta_i\geq i/(p-1).
\end{equation}
We define $\hat{\theta}(t) = \prod_{j=0}^\infty \theta(t^{p^j})$, which gives $\theta(t) = \hat{\theta}(t)/\hat{\theta}(t^p)$.  We write $\hat{\theta}(t) = \sum_{i=0}^\infty \hat{\theta}_i(\pi_0t)^i/i!$, and by \cite[Equation~(3.8)]{AS1} we have
\begin{equation}
{\rm ord}\:\hat{\theta}_i\geq 0.
\end{equation}
We also need the series $\hat{\theta}_1(t):= \hat{\theta}(t)/\exp(\pi_0t)$.  We write $\hat{\theta}_1(t) = \sum_{i=0}^\infty \hat{\theta}_{1,i}(\pi_0t)^i/{i!}$ and have by \cite[Equation~(3.10)]{AS1}
\begin{equation}
{\rm ord}\:\hat{\theta}_{1,i}\geq \frac{i(p-1)}{p}. 
\end{equation}

Suppose first that $v_j>-1$.  Define
\begin{equation}
\hat{\xi}_j(t) = \xi_j(t)\hat{\theta}_1(t) 
\end{equation}
 and write
 \[ \hat{\xi}_j(t) = \sum_{l=-\infty}^\infty g(v_j,l)(\pi_0 t)^{v_j+l}. \]
From (3.5) we have
 \[ g(v_j,l) = \sum_{i=0}^\infty [v_j]_{l-i}\frac{\hat{\theta}_{1,i}}{i!}, \]
 hence
 \begin{equation}
  \frac{g(v_j,l)}{[v_j]_l} = \sum_{i=0}^\infty \frac{[v_j]_{l-i}}{[v_j]_li!}\hat{\theta}_{1,i}. 
  \end{equation}
 One checks that for all $l\in{\mathbb Z}$ and all $i\in{\mathbb Z}_{\geq 0}$ one has
 \[ \frac{[v_j]_{l-i}}{[v_j]_li!} = \frac{(v_j+l)(v_j+l-1)\cdots(v_j+l-i+1)}{i!} = \binom{v_j+l}{i}, \]
 which is $p$-integral since $v_j$ is $p$-integral.  Since the $i=0$ term of the series in (3.11) equals 1, Equation (3.9) implies that $g(v_j,l)/[v_j]_l$ is $p$-integral and
 \begin{equation}
 \frac{g(v_j,l)}{[v_j]_l}\equiv 1\pmod{\pi_0}.
 \end{equation}
 In particular,
 \begin{equation}
 {\rm ord}\:g(v_j,l) = {\rm ord}\:[v_j]_l.
 \end{equation}
 
Let $\gamma$ be the operator on power series defined by
 \[ \gamma\bigg(\sum_{l=-\infty}^\infty c_lt^l\bigg) = \sum_{l=-\infty}^{-1} c_lt^l. \]
For $v_j=-1$, define
\begin{equation}
 \hat{\xi}_j(t) =\gamma\big(\xi_j(t)\hat{\theta}_1(t)\big). 
 \end{equation}
We write
\[ \hat{\xi}_j(t) =\sum_{l=0}^\infty g(-1,l)(\pi_0t)^{-1-l} \]
where by (3.5)
\[ g(-1,l) = \sum_{i=0}^\infty (-1)^{l+i} (l+i)! \frac{\hat{\theta}_{1,i}}{i!}. \]
We thus have
\[ \frac{g(-1,l)}{l!} = \sum_{i=0}^\infty (-1)^{l+i} \binom{l+i}{i}\hat{\theta}_{1,i}. \]
The $i=0$ term of this series is $(-1)^l$, so Equation (3.9) implies that $g(-1,l)/l!$ is $p$-integral and 
\begin{equation}
\frac{g(-1,l)}{l!}\equiv (-1)^l\pmod{\pi_0}.
\end{equation}
In particular,
\begin{equation}
{\rm ord}\:g(-1,l) = {\rm ord}\: l!.
\end{equation}

For $u\in Q+{\mathbb Z}A$ define
\begin{equation}
 G_u(\Lambda_1,\dots,\Lambda_N) = \sum_{l=(l_1,\dots,l_N)\in E(u)} \bigg(\prod_{j=1}^N g(v_j,l_j)\bigg)\Lambda^{v+l}.
 \end{equation}
 A straightforward calculation gives the analogue of (3.6):
 \begin{equation}
 \prod_{j=1}^N \hat{\xi}_j(\Lambda_jx^{{\bf a}_j}) = \sum_{u\in Q+{\mathbb Z}A} G_u(\Lambda)\pi_0^{w(u)}x^u.
 \end{equation}
 From (3.13) and (3.16) we have immediately the following result.
 \begin{proposition}
 For $u\in Q + {\mathbb Z}A$ the series $F_u(\Lambda)$ has $p$-integral coefficients if and only if the series $G_u(\Lambda)$ has $p$-integral coeffcients.
 \end{proposition}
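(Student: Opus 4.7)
The plan is to prove the proposition coefficient by coefficient: I will show that for each multi-index $l\in E(u)$ the coefficient of $\Lambda^{v+l}$ in $G_u(\Lambda)$ has the same $p$-adic valuation as the coefficient of that same monomial in $F_u(\Lambda)$, so that the former is $p$-integral if and only if the latter is. This clearly implies the proposition, since ``$p$-integral coefficients'' means nonnegative $p$-adic valuation for each coefficient.

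First I will read off the coefficients from the definitions. By (3.1), the coefficient of $\Lambda^{v+l}$ in $F_u(\Lambda)$ is $[v]_l=\prod_{j=1}^N[v_j]_{l_j}$. By (3.17), the coefficient of $\Lambda^{v+l}$ in $G_u(\Lambda)$ is $\prod_{j=1}^N g(v_j,l_j)$, where $g(v_j,l_j)$ is, by construction, the coefficient of $(\pi_0t)^{v_j+l_j}$ in the expansion of $\hat{\xi}_j(t)$ (for $v_j=-1$ this agrees after reindexing with $g(-1,m)$ in the notation of (3.14), with $m=-l_j\geq 0$, since the constraint $l_j\leq 0$ in $E$ forces $v_j+l_j\leq -1$).

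Next I will invoke the valuation estimates already proved. For each index $j$ with $-1<v_j\leq 0$, Equation (3.13) gives ${\rm ord}\,g(v_j,l_j)={\rm ord}\,[v_j]_{l_j}$. For each index $j$ with $v_j=-1$, a short check shows $[-1]_{l_j}=(-1)^{-l_j}(-l_j)!$ for $l_j\leq 0$, whose order is ${\rm ord}\,((-l_j)!)$; combined with (3.16) this gives the same equality ${\rm ord}\,g(v_j,l_j)={\rm ord}\,[v_j]_{l_j}$. Summing these equalities over $j$ yields
\[ {\rm ord}\bigg(\prod_{j=1}^N g(v_j,l_j)\bigg) = {\rm ord}\bigg(\prod_{j=1}^N [v_j]_{l_j}\bigg) \]
for every $l\in E(u)$, which is exactly the termwise equality of orders needed to finish the proof.

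There is no real obstacle here: the entire content of the proposition is already packaged into (3.13) and (3.16), which were obtained from the fact (established via (3.12) and (3.15)) that the ratio $g(v_j,l_j)/[v_j]_{l_j}$ is a $p$-integral unit congruent to $\pm 1$ modulo $\pi_0$. The proposition is really just the observation that these pointwise unit factors cannot change whether a coefficient lies in $\mathbb{Z}_p$.
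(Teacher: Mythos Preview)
Your proof is correct and follows exactly the same approach as the paper, which simply says the result is immediate from (3.13) and (3.16). You have just spelled out the details---in particular the reindexing $m=-l_j$ needed to match $g(-1,l_j)$ in (3.17) with $g(-1,m)$ in (3.16), and the short computation $[-1]_{l_j}=(-1)^{-l_j}(-l_j)!$---that the paper leaves implicit.
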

 
 The point of this proposition will be that it is easier to find good estimates for the coefficients of the $G_u(\Lambda)$ than for the coefficients of the $F_u(\Lambda)$.  As the next step in this process, we define in the next section spaces that contain the $\xi_j(t)$ and $\hat{\xi}_j(t)$.  In Sections 7 and 8 we define Dwork-Frobenius operators on these spaces and determine their action on 
 $\xi_j(t)$ and $\hat{\xi}_j(t)$.

 \section{Some $p$-adic vector spaces}

We construct some vector spaces over ${\mathbb C}_p$ on which the Dwork-Frobenius operator will act.  Consider a collection of elements $c_l\in{\mathbb C}_p$ indexed by $l=(l_1,\dots,l_N)\in{\mathbb Z}^N$.  Let $\lvert\cdot\rvert_\infty$ denote the usual absolute value on real numbers.  We define $\lvert l\rvert_\infty = \sum_{j=1}^N \lvert l_j\rvert_\infty$.  For $\delta>0$, we say that $\lvert c_l\rvert\delta^{\lvert l\rvert_\infty}$ {\it converges to $0$},  
$\lvert c_l\rvert\delta^{\lvert l\rvert_\infty}\to 0$, if for every $\epsilon >0$ we have $\lvert c_l\rvert\delta^{\lvert l\rvert_\infty}<\epsilon$ for all but finitely many $l$.  Put
\begin{equation}
C\{t_1,\dots,t_N\}=\bigg\{\xi(t)  = \sum_{l=(l_1,\dots,l_N)\in{\mathbb Z}^N} c_lt_1^{l_1}\cdots t_N^{l_N}\mid \text{$\lvert c_l\rvert\delta^{\lvert l\rvert_\infty}\to 0$ for all $\delta<1$}\bigg\}.
\end{equation}
For any subset $S\subseteq{\mathbb Z}^N$ we set
\begin{equation}
C_S\{t_1,\dots,t_N\}=\bigg\{\xi(t)  = \sum_{l=(l_1,\dots,l_N)\in S} c_lt_1^{l_1}\cdots t_N^{l_N}\mid \xi(t)\in C\{t_1,\dots,t_N\}\bigg\}.
\end{equation}
The set $C\{t_1,\dots,t_N\}$ is not a space of functions.  These series need not converge at any point $t=t_0\in{\mathbb C}_p$.

Let
\[ H(t) = \sum_{m=(m_1,\dots,m_N)\in{\mathbb N}^N} d_m t_1^{m_1}\cdots t_N^{m_N} \]
be a power series converging on a polydisk 
\[ \{(t_1,\dots,t_N)\in{\mathbb C}_p^N\mid\text{$\lvert t_j\rvert\leq R$ for $j=1,\dots,N$}\} \]
for some $R>1$, i.~e.,  
\begin{equation}
\lvert d_m\rvert R^{\sum_{j=1}^N m_j}\to 0.
\end{equation}
\begin{proposition}
Multiplication by $H(t)$ maps $C\{t_1,\dots,t_N\}$ into itself.
\end{proposition}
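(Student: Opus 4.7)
Write $H(t) = \sum_{m\in{\mathbb N}^N} d_mt^m$ and $\xi(t) = \sum_{l\in{\mathbb Z}^N} c_lt^l \in C\{t_1,\dots,t_N\}$. The formal product is
\[
H(t)\xi(t) = \sum_{k\in{\mathbb Z}^N} e_k t^k, \qquad e_k = \sum_{m\in{\mathbb N}^N} d_m c_{k-m}.
\]
Two things must be checked: (i) the inner sum converges in ${\mathbb C}_p$ for each fixed $k$, so that $e_k$ makes sense; and (ii) for every $\delta<1$ one has $|e_k|\delta^{|k|_\infty}\to 0$. By the non-archimedean triangle inequality, once (i) holds we have $|e_k|\leq \max_m|d_m c_{k-m}|$, so (ii) reduces to bounding that maximum.

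The key numerical ingredient is the elementary coordinatewise inequality valid for $m\in{\mathbb N}^N$ and $k\in{\mathbb Z}^N$:
\[
|k|_\infty - |m|_\infty \;\leq\; |k-m|_\infty \;\leq\; |k|_\infty + |m|_\infty,
\]
which follows from $|k_j-m_j|_\infty \leq |k_j|_\infty + m_j$ and $|k_j-m_j|_\infty \geq |k_j|_\infty - m_j$ (using $m_j\geq 0$) and summing over $j$. The upper bound controls convergence of the sum defining $e_k$; the lower bound is what lets decay in $|k-m|_\infty$ translate into decay in $|k|_\infty$.

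Now fix a target $\delta\in(0,1)$. Because $R>1$, I choose $\delta'$ with $\max(\delta,1/R)<\delta'<1$, so that $R\delta'>1$ and $\delta/\delta'<1$. Hypothesis (4.3) gives a constant $D$ with $|d_m|\leq D R^{-|m|_\infty}$ for $m\in{\mathbb N}^N$, and the definition of $C\{t_1,\dots,t_N\}$ applied to the parameter $\delta'$ gives a constant $B$ with $|c_l|\leq B\,\delta'^{-|l|_\infty}$ for all $l\in{\mathbb Z}^N$. Combining these with the upper quasi-triangle inequality yields
\[
|d_m c_{k-m}|\,\delta^{|k|_\infty} \;\leq\; BD\,(R\delta')^{-|m|_\infty}(\delta/\delta')^{|k|_\infty}.
\]
For fixed $k$ the right-hand side tends to $0$ as $|m|_\infty\to\infty$ since $R\delta'>1$, which secures (i) by $p$-adic completeness. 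For (ii), taking the maximum over $m\in{\mathbb N}^N$ and using $(R\delta')^{-|m|_\infty}\leq 1$ gives $|e_k|\delta^{|k|_\infty}\leq BD(\delta/\delta')^{|k|_\infty}\to 0$ as $|k|_\infty\to\infty$.

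There is no real obstacle here; the only point worth noting is that the strict inequality $R>1$ is precisely what supplies the slack needed to choose $\delta'$ simultaneously larger than $\delta$ and larger than $1/R$, so that both factors $(R\delta')^{-|m|_\infty}$ and $(\delta/\delta')^{|k|_\infty}$ behave as desired.
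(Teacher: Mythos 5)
Your proof is correct. You set up the same Cauchy product $e_k=\sum_{m\in{\mathbb N}^N}d_mc_{k-m}$ as the paper, and the essential ingredients are shared: the inequality $\lvert k-m\rvert_\infty\le\lvert k\rvert_\infty+\sum_j m_j$ and the ultrametric bound $\lvert e_k\rvert\le\max_m\lvert d_mc_{k-m}\rvert$. Where you genuinely diverge is in how the decay in $k$ is extracted. The paper fixes $\delta\in(1/R,1)$ and works directly with the raw ``$\to 0$'' hypotheses: it bounds $\lvert e_k\rvert\delta^{\lvert k\rvert_\infty}$ by a supremum of products and then runs an $\epsilon$-bookkeeping argument with finite exceptional sets $S_1$, $S_2$ and their sumset $S_1+S_2$ to show that all but finitely many of these suprema are small. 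You instead introduce a slack radius $\delta'$ with $\max(\delta,1/R)<\delta'<1$, convert the ``$\to 0$'' hypotheses into uniform bounds $\lvert d_m\rvert\le DR^{-\lvert m\rvert_\infty}$ and $\lvert c_l\rvert\le B\,\delta'^{-\lvert l\rvert_\infty}$, and arrive at the explicit geometric estimate $\lvert e_k\rvert\delta^{\lvert k\rvert_\infty}\le BD(\delta/\delta')^{\lvert k\rvert_\infty}$, which disposes of convergence and decay in one stroke and is quantitatively sharper. Both arguments are complete; yours trades the paper's combinatorial finite-set argument for the freedom to enlarge the radius, which is available precisely because membership in $C\{t_1,\dots,t_N\}$ is a condition imposed for every $\delta<1$. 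One cosmetic remark: the lower half of your quasi-triangle inequality, $\lvert k-m\rvert_\infty\ge\lvert k\rvert_\infty-\lvert m\rvert_\infty$, is never actually used in your computation --- only the upper bound enters, since $\delta'^{-x}$ is increasing in $x$.
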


\begin{proof}
For $\xi(t) = \sum_{l\in {\mathbb Z}^N} c_lt^l\in C\{t_1,\dots,t_N\}$ we have formally
\begin{equation}
H(t)\xi(t) = \sum_{k\in{\mathbb Z}^N} e_kt^k,
\end{equation}
where
\begin{equation}
e_k = \sum_{m\in{\mathbb N}^N}d_mc_{k-m}.
\end{equation}
We need to show that the series (4.6) converges and that for each $\delta$, $0<\delta<1$, and $\epsilon>0$ one has
\begin{equation}
\lvert e_k\rvert \delta^{\lvert k\rvert_\infty}<\epsilon
\end{equation}
for all but finitely many $k$.  (By ``convergence'' of the series (4.6), we mean that the partial sums $\sum_{m\in [0,M]^N} d_mc_{k-m}$ approach a limit as $M\to\infty$.  This is equivalent to requiring that for every $\epsilon>0$, all but finitely many terms of the series~(4.6) are less than $\epsilon$.)

Choose $r$, $1<r<R$.  We have
\begin{equation}
 \lvert d_mc_{k-m}\rvert r^{\sum_{j=1}^N k_j} = (\lvert d_m\rvert r^{\sum_{j=1}^N m_j})(\lvert c_{k-m}\rvert r^{\sum_{j=1}^N (k_j-m_j)}). 
 \end{equation}
For all but finitely many terms in the sum (4.6) we have $k_j-m_j\leq 0$ for $j=1,\dots,N$.  We then have from (4.8)
\begin{equation}
\lvert d_mc_{k-m}\rvert r^{\sum_{j=1}^N k_j} = (\lvert d_m\rvert r^{\sum_{j=1}^N m_j})(\lvert c_{k-m}\rvert(r^{-1})^{\lvert k-m\rvert_\infty})
\end{equation}
for all but finitely many $m$.  The convergence of (4.6) now follows from (4.1) and~(4.3).

Choose $\delta$, $1/R<\delta<1$.  From (4.6) we have 
\[ \lvert e_k\rvert\delta^{\lvert k\rvert_\infty}\leq \sup_{m\in{\mathbb N}^N} (\lvert d_m\rvert\delta^{-\sum_{j=1}^N m_j})(\lvert c_{k-m}\rvert\delta^{\lvert k\rvert_\infty+\sum_{j=1}^N m_j}). \]
Since $\lvert k\rvert_\infty + \sum_{j=1}^N m_j\geq \lvert k-m\rvert_\infty$ and $\delta<1$, we get
\[ \lvert e_k\rvert\delta^{\lvert k\rvert_\infty}\leq \sup_{m\in{\mathbb N}^N} (\lvert d_m\rvert\delta^{-\sum_{j=1}^N m_j})(\lvert c_{k-m}\rvert\delta^{\lvert k-m\rvert_\infty}). \]
Since $\delta^{-1}<R$, condition (4.3) implies that the set $\{\lvert d_m\rvert\delta^{-\sum_{j=1}^N m_j}\mid m\in{\mathbb N}^N\}$ is bounded above, say
\begin{equation}
 \lvert d_m\rvert\delta^{-\sum_{j=1}^N m_j}\leq M_1\;\text{for all $m\in{\mathbb N}^N$.} 
 \end{equation}
Since $\xi(t)\in C\{t_1,\dots,t_N\}$, the set $\{\lvert c_{k-m}\rvert\delta^{\lvert k-m\rvert_\infty}\mid k\in{\mathbb Z}^N\}$ is bounded above, say
\begin{equation}
 \lvert c_{k-m}\rvert\delta^{\lvert k-m\rvert_\infty}\leq M_2\;\text{for all $k\in{\mathbb Z}^N$.} 
 \end{equation}
Let $\epsilon>0$.  By (4.3) there exists a finite set $S_1$ such that
\begin{equation}
 \lvert d_m\rvert\delta^{-\sum_{j=1}^N m_j}< \frac{\epsilon}{M_2}\;\text{for $m\not\in S_1$.} 
 \end{equation}
Since $\xi(t)\in C\{t_1,\dots,t_N\}$, there exists a  finite set $S_2$ such that
\begin{equation}
 \lvert c_{k-m}\rvert\delta^{\lvert k-m\rvert_\infty}<\frac{\epsilon}{M_1}\;\text{for $k-m\not\in S_2$.} 
 \end{equation}
Let $S_1+S_2=\{s_1+s_2\mid s_1\in S_1\text{ and } s_2\in S_2\}$, a finite set.  Suppose that $k\not\in S_1+S_2$.  If $k-m\not\in S_2$, then (4.10) and (4.13) imply that
\begin{equation}
 (\lvert d_m\rvert\delta^{-\sum_{j=1}^N m_j})(\lvert c_{k-m}\rvert\delta^{\lvert k-m\rvert_\infty})<\epsilon. 
 \end{equation}
If $k-m\in S_2$, i.~e., $k\in m+S_2$, then $m\not\in S_1$ because $k\not\in S_1+S_2$.  Equations (4.11) and (4.12) then imply that (4.14) holds.
\end{proof}

The space $C\{t_1,\dots,t_N\}$ is not a ring, but for pairs involving disjoint sets of variables multiplication is defined.  The following assertion is straightforward to check.
\begin{lemma}
If $\eta_j(t_j) = \sum_{l_j=-\infty}^{\infty} c_{l_j} t_j^{l_j}\in C\{t_j\}$ for $j=1,\dots,N$, then
\[ \prod_{j=1}^N\eta_j(t_j) = \sum_{l=(l_1,\dots,l_N)\in{\mathbb Z}^N}\bigg(\prod_{j=1}^N c_{l_j}\bigg) \bigg(\prod_{j=1}^N t_j^{l_j}\bigg) \in C\{t_1,\dots,t_N\}. \]
\end{lemma}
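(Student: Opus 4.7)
The plan is to verify the defining decay condition of $C\{t_1,\dots,t_N\}$ directly. The first observation, trivial but necessary, is that since the $\eta_j$ involve pairwise disjoint sets of variables, the ``product'' poses no convergence issue at the level of coefficients: the coefficient of $t_1^{l_1}\cdots t_N^{l_N}$ is simply the single scalar $\prod_{j=1}^N c_{l_j}$, with no infinite sum to worry about. So the formal expression on the right-hand side of the statement is unambiguously defined as a Laurent series in $N$ variables.

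The substantive content is the decay estimate. Fix $\delta$ with $0<\delta<1$. Using multiplicativity of the $p$-adic absolute value and the identity $\lvert l\rvert_\infty = \sum_{j=1}^N\lvert l_j\rvert_\infty$ from the definition in Section~4, I factor
\[ \bigg\lvert\prod_{j=1}^N c_{l_j}\bigg\rvert\delta^{\lvert l\rvert_\infty} = \prod_{j=1}^N\bigl(\lvert c_{l_j}\rvert\delta^{\lvert l_j\rvert_\infty}\bigr). \]
The hypothesis $\eta_j\in C\{t_j\}$ supplies two features for each factor on the right: boundedness, say $\lvert c_{l_j}\rvert\delta^{\lvert l_j\rvert_\infty}\leq M_j<\infty$ for all $l_j\in\mathbb{Z}$ (immediate from the fact that only finitely many terms of a null sequence can exceed any given bound), and, for any preassigned threshold, a cofinite subset of $\mathbb{Z}$ on which that factor lies below the threshold.

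Given $\epsilon>0$, I would set $M=\max_j M_j$ and, for each $j$, choose a finite $S_j\subset\mathbb{Z}$ such that $\lvert c_{l_j}\rvert\delta^{\lvert l_j\rvert_\infty}<\epsilon/M^{N-1}$ for every $l_j\notin S_j$. Then $S_1\times\cdots\times S_N\subset\mathbb{Z}^N$ is finite, and for any $l$ outside this product at least one coordinate $l_{j_0}$ lies outside $S_{j_0}$; bounding that factor by $\epsilon/M^{N-1}$ and each of the remaining $N-1$ factors by $M$ gives $\lvert\prod_j c_{l_j}\rvert\delta^{\lvert l\rvert_\infty}<\epsilon$. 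Since $\delta\in(0,1)$ was arbitrary, this is exactly the required convergence, and the product lies in $C\{t_1,\dots,t_N\}$.

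There is essentially no obstacle here; the lemma amounts to the assertion that the tensor product of $N$ one-variable null sequences (with respect to the weights $\delta^{\lvert l_j\rvert_\infty}$) is an $N$-variable null sequence (with respect to $\delta^{\lvert l\rvert_\infty}$), and the factoring identity above reduces the problem to this one-line observation.
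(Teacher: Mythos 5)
Your argument is correct: the paper itself gives no proof of this lemma, stating only that the assertion is ``straightforward to check,'' and your direct verification --- factoring $\lvert\prod_j c_{l_j}\rvert\delta^{\lvert l\rvert_\infty}=\prod_j\lvert c_{l_j}\rvert\delta^{\lvert l_j\rvert_\infty}$ and combining boundedness with cofinite smallness of each factor --- is precisely the routine check the authors had in mind.
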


We also have a commutativity relation for multiplication involving disjoint sets of variables and the multiplication of Proposition~4.4.
\begin{lemma}
Suppose that for $j=1,\dots,N$ the series $H_j(t_j) = \sum_{m_j=0}^\infty d_{m_j}t_j^{m_j}$ converge for $\lvert t_j\rvert\leq R$, $R>1$, and $\eta_j(t_j) = \sum_{l_j=-\infty}^{\infty} c_{l_j} t_j^{l_j}\in C\{t_j\}$.  Then
\begin{equation}
\bigg(\prod_{j=1}^N \big(H_j(t_j)\eta_j(t_j)\big)\bigg) = \bigg(\prod_{j=1}^N H_j(t_j)\bigg)\bigg(\prod_{j=1}^N \eta_j(t_j)\bigg) \in C\{t_1,\dots,t_N\}.
\end{equation}
\end{lemma}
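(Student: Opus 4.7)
The plan is to check that both sides of the equation lie in $C\{t_1,\dots,t_N\}$ and then to verify equality by comparing the coefficient of each monomial $t^l=t_1^{l_1}\cdots t_N^{l_N}$.

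For membership on the left-hand side: the one-variable case of Proposition~4.4 gives $H_j(t_j)\eta_j(t_j)\in C\{t_j\}$ with $l_j$-th coefficient $\sigma_j(l_j):=\sum_{m_j\ge 0}d_{m_j}c_{l_j-m_j}$, and the preceding disjoint-variables lemma then places $\prod_{j=1}^N(H_j\eta_j)$ in $C\{t_1,\dots,t_N\}$ with $t^l$-coefficient $\prod_{j=1}^N\sigma_j(l_j)$. For the right-hand side: that same disjoint-variables lemma gives $\prod_j\eta_j(t_j)\in C\{t_1,\dots,t_N\}$, while $\prod_j H_j(t_j)$ is an ordinary $N$-variable power series converging on the polydisk $\{\lvert t_j\rvert\le R\}$ with $R>1$, so Proposition~4.4 places the right-hand side in $C\{t_1,\dots,t_N\}$ with $t^l$-coefficient
\[ \sum_{m\in\mathbb{N}^N}\bigg(\prod_{j=1}^N d_{m_j}\bigg)\bigg(\prod_{j=1}^N c_{l_j-m_j}\bigg)=\sum_{m\in\mathbb{N}^N}\prod_{j=1}^N d_{m_j}c_{l_j-m_j}. \]

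It thus remains to prove the distributive identity $\prod_{j=1}^N\sigma_j(l_j)=\sum_{m\in\mathbb{N}^N}\prod_{j=1}^N d_{m_j}c_{l_j-m_j}$ in $\mathbb{C}_p$. The estimates in the proof of Proposition~4.4 already imply $\lvert d_{m_j}c_{l_j-m_j}\rvert\to 0$ as $m_j\to\infty$ for each fixed $l_j$, whence $\lvert\prod_j d_{m_j}c_{l_j-m_j}\rvert\to 0$ as $\lvert m\rvert_\infty\to\infty$, so the sum on the right is unconditionally convergent in the non-archimedean sense. Truncating, $\prod_{j=1}^N\sum_{m_j=0}^M d_{m_j}c_{l_j-m_j}=\sum_{m\in[0,M]^N}\prod_j d_{m_j}c_{l_j-m_j}$ by finite distributivity; letting $M\to\infty$, the left side tends to $\prod_j\sigma_j(l_j)$ and the right side tends to the full unordered sum over $\mathbb{N}^N$, which gives the identity. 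I expect no genuine obstacle: the argument uses only estimates already available, and the one subtlety---the rearrangement---reduces to the standard fact that summable families in a non-archimedean field may be freely grouped and reordered.
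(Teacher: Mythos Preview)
Your proof is correct and follows essentially the same approach as the paper. The paper's own proof simply states that the products are well-defined by Proposition~4.4 and Lemma~4.15 and that the equality is then ``a straightforward calculation''; you have supplied exactly that calculation, invoking the same two results for well-definedness and then verifying the coefficient identity via truncation and non-archimedean rearrangement.
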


\begin{proof}
The products in (4.17) are well-defined by Proposition 4.4 and Lemma 4.15.   The proof of (4.17) is then a straightforward calculation.
\end{proof}

\section{$p$-adic estimates for $\xi_j(t)$ and $\hat{\xi}_j(t)$}

\begin{proposition}
We have $\xi_j(t), \hat{\xi}_j(t)\in t^{v_j}C\{t\}$.
\end{proposition}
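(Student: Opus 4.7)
The plan is to prove the two assertions separately: first $\xi_j(t)\in t^{v_j}C\{t\}$ by a direct estimate on its explicit coefficients, and then $\hat{\xi}_j(t)\in t^{v_j}C\{t\}$ by invoking Proposition~4.4 (in a shifted form) to pass from $\xi_j$ to $\hat{\xi}_j$ via multiplication by $\hat{\theta}_1(t)$ and, when $v_j=-1$, truncation by $\gamma$. The whole argument reduces to showing that the $p$-adic absolute values of the coefficients of $\xi_j(t)$ grow at most polynomially in $|l|$, since any polynomially-growing sequence $|c_l|$ satisfies $|c_l|\delta^{|l|_\infty}\to 0$ for every $\delta<1$.

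For $\xi_j(t)$, I would handle each of the cases in (3.5) in turn. When $v_j=0$, the coefficient of $t^l$ is $\pi_0^l/l!$, of order $s_p(l)/(p-1)\geq 0$. When $-1<v_j<0$ is $p$-integral and $l\leq 0$, the formula $[v_j]_l=v_j(v_j-1)\cdots(v_j+l+1)$ expresses $[v_j]_l$ as a product of $p$-integers, so ${\rm ord}\,[v_j]_l\geq 0$; after multiplication by $\pi_0^{v_j+l}$ the total order is bounded below by $(v_j+l)/(p-1)$, which stays comfortably controlled as $l\to-\infty$. When $-1<v_j<0$ and $l>0$, we have $[v_j]_l=1/(v_j+1)_l$, and this is precisely where I would invoke Dwork's estimate \cite{D} to bound ${\rm ord}\,(v_j+1)_l$ from above by $l/(p-1)+O(\log l)$, so that after multiplying by $\pi_0^{v_j+l}$ (of order $(v_j+l)/(p-1)$) the resulting coefficient has order at least $-O(\log l)$. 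Finally, when $v_j=-1$, the coefficient of $t^{-1-l}$ ($l\geq 0$) is $(-1)^ll!\,\pi_0^{-1-l}$, of order $-(1+s_p(l))/(p-1)$, which is likewise $O(\log l)$ in absolute value. In every case the coefficients grow at worst polynomially, delivering the required decay.

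For $\hat{\xi}_j(t)$, the first step is to observe that $\hat{\theta}_1(t)$ is a genuine power series in $t$ whose coefficient of $t^i$ has order at least $i(p-1)/p$ by (3.9) together with ${\rm ord}(\pi_0^i/i!)\geq 0$. Hence $\hat{\theta}_1(t)$ converges on a disk of radius $R=p^{(p-1)/p}>1$. Proposition~4.4 then applies (after the trivial relabeling induced by the shift $t^{v_j}$) to give that multiplication by $\hat{\theta}_1(t)$ carries $t^{v_j}C\{t\}$ into itself. For $v_j>-1$ this immediately yields $\hat{\xi}_j(t)=\xi_j(t)\hat{\theta}_1(t)\in t^{v_j}C\{t\}$. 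For $v_j=-1$, the operator $\gamma$ that truncates to strictly negative powers clearly preserves $t^{-1}C\{t\}$, so $\hat{\xi}_j(t)=\gamma\bigl(\xi_j(t)\hat{\theta}_1(t)\bigr)\in t^{-1}C\{t\}$ as well.

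The main obstacle is the case $-1<v_j<0$ with $l\to+\infty$: a naive bound would suggest that ${\rm ord}\,[v_j]_l$ drops at rate $-l/(p-1)$ and exactly cancels the ${\rm ord}\,\pi_0^l=l/(p-1)$ gain, yielding bounded but not decaying coefficients. Dwork's refinement, which saves a $\log l$ term, is what makes the convergence condition actually work.
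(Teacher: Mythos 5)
Your route for $\hat\xi_j$ --- deducing membership in $t^{v_j}C\{t\}$ from that of $\xi_j$ by applying Proposition~4.4 to $H=\hat\theta_1(t)$ (which indeed converges on a polydisk of radius $>1$) and, for $v_j=-1$, then applying $\gamma$ --- is a valid alternative to the paper's argument; the paper instead appeals to the coefficient-order equalities (3.13) and (3.16) established in Section~3, which say ${\rm ord}\:g(v_j,l)={\rm ord}\:[v_j]_l$. Either route is fine once $\xi_j\in t^{v_j}C\{t\}$ is known. But your direct estimate for $\xi_j$ has a genuine gap in the case $-1<v_j<0$ with $l\le 0$.

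There you observe that $[v_j]_l=v_j(v_j-1)\cdots(v_j+l+1)$ is a product of $p$-integers, so ${\rm ord}\:[v_j]_l\ge 0$, and conclude that the total order of $\pi_0^{v_j+l}[v_j]_l$ is bounded below by $(v_j+l)/(p-1)$, which you call ``comfortably controlled'' as $l\to-\infty$. It is not: $(v_j+l)/(p-1)\to-\infty$ linearly in $|l|$, so this bound only gives $|c_l|\le p^{(1+|l|)/(p-1)}$, and then $|c_l|\delta^{|l|}\to 0$ holds only for $\delta<p^{-1/(p-1)}$ --- not for every $\delta<1$, which is what membership in $C\{t\}$ requires. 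The crude bound ${\rm ord}\:[v_j]_l\ge 0$ throws away the fact that among those $|l|$ consecutive $p$-integers, a cumulative total of about $|l|/(p-1)$ powers of $p$ are picked up. What is actually needed is the Pochhammer estimate ${\rm ord}\:(b)_{-l}\ge(-l-s_{-l})/(p-1)$, equivalently (via $[v_j]_l=(-1)^l(-v_j)_{-l}$) the bound (5.3) of the paper, i.e., the $l<0$ half of Lemma~5.2. Combined with ${\rm ord}\:\pi_0^{v_j+l}=(v_j+l)/(p-1)$ this yields a total order $\ge(v_j-s_{-l})/(p-1)=-O(\log|l|)$, which does suffice. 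So the negative-$l$ direction is not the ``easy'' case you treat it as; it requires the same style of refined Pochhammer estimate as the positive-$l$ direction you correctly flag as the obstacle. Your handling of $v_j=0$, $v_j=-1$, and $l>0$ is fine.
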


When $v_j=-1$, this result is trivial by (3.5) and (3.16).  When $-1<v_j\leq 0$, it is an immediate consequence of (3.5), (3.13), and the following lemma.  
\begin{lemma}
Let $z$ be a $p$-integral rational number which is not a negative integer.  Then for all $l\in{\mathbb Z}$
\[ {\rm ord}\: \pi_0^l[z]_l\geq -\log_p(p\lvert l\rvert_\infty). \]
\end{lemma}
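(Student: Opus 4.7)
The plan is to reduce the desired inequality to Dwork's recursive estimate \cite{D} for the $p$-adic valuation of a Pochhammer product, combined with the basic identity ${\rm ord}\:\pi_0 = 1/(p-1)$. The case $l=0$ is immediate since $[z]_0=1$. For $l>0$, the identity $[z]_l^{-1} = (z+1)(z+2)\cdots(z+l)$ translates a lower bound on ${\rm ord}\:[z]_l$ into an upper bound on the valuation of a Pochhammer product. For $l<0$, the relation $[z]_l = (-1)^l(-z)_{-l}$ from the introduction reduces matters to the same type of estimate applied to the variable $-z$.

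The core ingredient is Dwork's iterative counting of $p$-divisible factors. Let $\alpha\in\{0,1,\dots,p-1\}$ denote the residue of $z\pmod p$. Among the $l$ factors $(z+k)$ for $1\leq k\leq l$, the $p$-divisible ones are precisely those with $k\equiv -\alpha\pmod p$; their count $s_0$ satisfies $|s_0-l/p|<1$, while the remaining factors are $p$-units. Extracting $p^{s_0}$ rewrites the $p$-divisible factors as a product of $s_0$ consecutive shifts of the Dwork iterate $z^{(1)}:=(z-\alpha)/p$, which is again a $p$-integral rational and not a negative integer. Iterating with $l_{i+1}=s_i$ yields a recursion that terminates once $l_i=0$, after $O(\log_p l)$ levels, so that ${\rm ord}\prod_{k=1}^l(z+k)=\sum_i s_i$. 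Dwork's estimate bounds this sum by $l/(p-1)+\log_p(pl)$; combined with ${\rm ord}\:\pi_0^l=l/(p-1)$, the leading $l/(p-1)$ terms cancel and the claim follows.

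The main obstacle is controlling the logarithmic correction precisely enough to obtain the stated bound $-\log_p(p|l|_\infty)$ rather than a generic $O(\log_p l)$; pinning down this constant is exactly where we lean on Dwork's specific form of the estimate. The hypothesis that $z$ is not a negative integer is essential throughout: it guarantees that every Dwork iterate $z^{(i)}$ remains well-defined and that no Pochhammer factor $(z^{(i)}+j)$ vanishes at an intermediate stage of the recursion.
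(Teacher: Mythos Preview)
Your proposal is correct and matches the paper's approach: both split by the sign of $l$, use the identity $[z]_l=(-1)^l(-z)_{-l}$ for $l<0$, and appeal to Dwork's formula \cite[Eq.~(1.3)]{D} for the upper bound on ${\rm ord}\,(z+1)_l$ when $l>0$. One caveat worth sharpening: for $l<0$ what is needed is a \emph{lower} bound on ${\rm ord}\,(-z)_{-l}$, the elementary direction, which the paper obtains from the classical estimate ${\rm ord}\,(b)_m\geq (m-s_m)/(p-1)$ (extended by continuity to $b\in\mathbb{Z}_p$) rather than the full Dwork recursion---so your phrase ``the same type of estimate'' slightly understates the asymmetry between the two cases, though your exact recursive count ${\rm ord}=\sum_i s_i$ would certainly also deliver the lower bound.
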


The remainder of this section is the proof of Lemma 5.2.
For a positive integer~$b$ one checks that ${\rm ord}\: (b)_l\geq (l-s_l)/(p-1)$, where $s_l$ denotes the sum of the digits in the $p$-adic expansion of $l$.  As a function of $b$, $(b)_l$ is continuous in the $p$-adic topology on ${\mathbb Z}_p$, so this estimate holds on ${\mathbb Z}_p$.  If $l$ is a negative integer, it then follows from (1.3) that for $z$ a $p$-integral rational number one has
\begin{equation}
{\rm ord}\: [z]_l\geq \frac{-l-s_{-l}}{p-1}.
\end{equation}
Using the trivial estimate $s_{-l}/(p-1)\leq \log_p(-pl)$ we have finally
\begin{equation}
{\rm ord}\: [z]_l\geq \frac{-l}{p-1} -\log_p(-pl)
\end{equation}
when $l$ is a negative integer.  Estimate (5.4) implies Lemma~5.2 when $l$ is a negative integer.  

 Suppose now that $l>0$ and $z$ is a $p$-integral rational number.  Since $[z]_l = 1/(z+1)_l$, finding a lower bound for ${\rm ord}\:[z]_l$ is equivalent to finding an upper bound for ${\rm ord}\: (z+1)_l$.  
For this we use Dwork \cite[Equation (1.3)]{D}.  Write 
\[ z+1 = -\sum_{i=0}^\infty \sigma_ip^i\;\text{with $0\leq\sigma_i\leq p-1$ for all $i$} \]
and set
\[ \phi(z+1) =-\sum_{i=0}^\infty \sigma_{i+1}p^i. \]
Then
\[ p\phi(z+1)-(z+1) = \sigma_0. \]
Write $l=\sum_{i=0}^{k-1} l_ip^i$ and put $l^{(m)} = l_m+l_{m+1}p + \cdots + l_{k-1}p^{k-1-m}$.  For a real number $x$ define $\rho(x)=0$ if $x\leq 0$ and define $\rho(x)=1$ if $x>0$.  From \cite[Equation~(1.3)]{D} we have
\begin{multline}
{\rm ord}\: (\phi^{m}(z+1))_{l^{(m)}} - {\rm ord}\: (\phi^{m+1}(z+1))_{l^{(m+1)}} \\ 
= l^{(m+1)} + (1+{\rm ord}\:(l^{(m+1)} + \phi^{m+1}(z+1)))\rho(l_m-\sigma_m).
\end{multline}
Summing (5.5) over $m=0,1,\dots,k-1$ gives
\begin{equation}
{\rm ord}\: (z+1)_l = \frac{l-s_l}{p-1} + \sum_{m=0}^{k-1}(1+{\rm ord}\:(l^{(m+1)} + \phi^{m+1}(z+1)))\rho(l_m-\sigma_m).
\end{equation}
For a given $m\in\{0,\dots,k-1\}$ the contribution to the sum on the right-hand side is zero unless $l_m>\sigma_m$.  
When $l_m>\sigma_m$ the contribution is 
\[ 1+{\rm ord}\:(l^{(m+1)} + \phi^{m+1}(z+1)). \]
We have 
\begin{align*} 
 l^{(m+1)} &= l_{m+1} + l_{m+2}p + \cdots + l_{k-1}p^{k-1-m}, \\
 \phi^{m+1}(z+1) &= -\sigma_{m+1} -\sigma_{m+2}p -\dots. 
 \end{align*}
It follows that ${\rm ord}\: (l^{(m+1)} + \phi^{m+1}(z+1))\geq s$ if and only if
\[ l_r=\sigma_r\;\text{for $r=m+1,m+2,\dots,m+s$.} \]

Call a collection of consecutive terms $(l_m,l_{m+1},\dots,l_{m+s})$ of $(l_0,\dots,l_{k-1})$ {\it good\/} if $l_m>\sigma_m$, $l_r=\sigma_r$ for $r=m+1,\dots,m+s$, and $l_{m+s+1}\neq \sigma_{m+s+1}$.  Clearly any two good collections are disjoint.  If we let $\ell$ be the total number of terms of $(l_0,\dots,l_{k-1})$ that appear in some good collection of consecutive terms, then
\begin{equation}
{\rm ord}\:(z+l)_l = \frac{l-s_l}{p-1} + \ell,
\end{equation}
hence
\begin{equation}
{\rm ord}\:[z]_l =  -\frac{l-s_l}{p-1} - \ell
\end{equation}
when $l$ is a positive integer.
In particular, the disjointness of good subsequences implies the estimate
\begin{equation}
{\rm ord}\:[z]_l\geq -\frac{l-s_l}{p-1} - k.
\end{equation}
Since $k\leq \log_p(pl)$ we get finallly
\begin{equation}
{\rm ord}\:[z]_l \geq -\frac{l}{p-1} - \log_p(pl)
\end{equation}
when $l$ is a positive integer.  Estimate (5.10) implies Lemma~5.2 when $l$ is a positive integer.

\section{An auxiliary function}

The purpose of this section is to introduce a function that will appear in formulas for the Dwork-Frobenius action.
We use the following variation of the Dwork exponential function:
\[ \sigma(t):= \exp(\pi_0 t-\pi_0 t^p) =  \sum_{i=0}^\infty \sigma_it^i. \]
From the definition of $\hat{\theta}_1(t)$ we have $\exp(\pi_0 t) = \hat{\theta}(t)/\hat{\theta}_1(t)$, so 
\[ \exp(\pi_0 t - \pi_0 t^p) = \frac{\hat{\theta}(t)\hat{\theta}_1(t^p)}{\hat{\theta}(t^p)\hat{\theta}_1(t)} = \theta(t) \frac{\hat{\theta}_1(t^p)}{\hat{\theta}_1(t)}. \]
By (3.7) and (3.9) the $\sigma_i$ in the series expansion satisfy
\begin{equation}
 {\rm ord}\:\sigma_i\geq \frac{i(p-1)}{p^2}. 
\end{equation}

Consider the series
\[ H(z) = \sum_{l=0}^\infty [-z]_{-l}\sigma_{pl}\pi_0^{-l} = \sum_{l=0}^\infty (-1)^l (z)_l \sigma_{pl}\pi_0^{-l}. \]
The estimate (6.1) implies that this series is an analytic function of $z$ for ${\rm ord}\:z>-(p-1)/p+1/(p-1)$.  The set of analyticity includes ${\mathbb Z}_p$ for $p\geq 3$ but not for $p=2$.  However, since $(z)_l$ is a continuous function on ${\mathbb Z}_p$, the estimates of (6.1) and Lemma~5.2 show that the series $H(z)$ converges on ${\mathbb Z}_p$ to a continuous function for $p\geq 2$.  

\begin{lemma}
The function $H(z)$ assumes unit values for $z\in{\mathbb Z}_p$.
\end{lemma}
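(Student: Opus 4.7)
The plan is to show that $H(z)$ is congruent to $1$ modulo the maximal ideal of the valuation ring of ${\mathbb C}_p$, which will give $|H(z)|=1$.  The $l=0$ term of the series equals $[-z]_0\sigma_0\pi_0^0=1$, so it suffices to show that for each $l\geq 1$ the $l$-th term has strictly positive ord for every $z\in{\mathbb Z}_p$.

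First I would factor $\sigma(t)=\exp(\pi_0 t)\exp(-\pi_0 t^p)$ and take the Cauchy product to obtain
\[
\sigma_{pl}\pi_0^{-l}=(-1)^l\sum_{c=0}^{l}\frac{(-1)^c\pi_0^{c(p-1)}}{(pc)!\,(l-c)!}.
\]
Writing $s_n$ for the base-$p$ digit sum of $n$, the standard formula ${\rm ord}\,n!=(n-s_n)/(p-1)$ together with $s_{pc}=s_c$ and the subadditivity $s_c+s_{l-c}\geq s_l$ shows that each summand has ${\rm ord}\geq(s_c+s_{l-c}-s_l)/(p-1)\geq 0$.  Combined with ${\rm ord}\,(z)_l\geq{\rm ord}\,l!$ for $z\in{\mathbb Z}_p$---which follows because $(z)_l/l!=\binom{z+l-1}{l}$ is $p$-integral by continuity from the positive integers---the $l$-th term of $H(z)$ has ${\rm ord}\geq 0$.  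To upgrade this to a strict inequality I would use $\pi_0^{p-1}+p\equiv 0\pmod{\pi_0^{p}}$, an immediate consequence of the defining relation $\sum_{i\geq 0}\pi_0^{p^i}/p^i=0$; substituting $\pi_0^{c(p-1)}\equiv(-p)^c$ reduces the leading contribution to the $l$-th term of $H(z)$ to
\[
(z)_l\cdot S_l,\qquad S_l:=\sum_{c=0}^{l}\frac{p^c}{(pc)!\,(l-c)!}.
\]
The final step is to verify that ${\rm ord}(l!\,S_l)\geq 1$ for every $l\geq 1$: for $l=1$ this is Wilson's theorem $(p-1)!+1\equiv 0\pmod{p}$, and for $l\geq 2$ I expect an inductive argument using Wilson-type congruences applied to each term in the expansion $l!\,S_l=\sum_{c}p^c\binom{l}{c}c!/(pc)!$.

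The hardest part is this last inductive step, as the raw estimates from (6.1) and Lemma~5.2 only deliver $p$-integrality rather than strict positivity.  If that route proves stubborn, an alternative is first to establish $|H(z)|=1$ on the dense subset $\{-k:k\in{\mathbb Z}_{\geq 0}\}\subset{\mathbb Z}_p$, where the series terminates: $H(-k)=\sum_{l=0}^{k}(k!/(k-l)!)\,\sigma_{pl}\pi_0^{-l}$ is a finite sum amenable to direct analysis via the Wilson computation above, and the continuity of $H$ on ${\mathbb Z}_p$ noted in the paragraph preceding the lemma then extends the unit property to arbitrary $z\in{\mathbb Z}_p$ by the ultrametric inequality.
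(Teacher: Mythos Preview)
Your fallback route---specialize to nonpositive integer arguments and invoke density plus continuity---is precisely the paper's strategy. The paper pushes it further than your sketch: writing $(z)_l=(-1)^l l!\binom{-z}{l}$ and setting $z=-r$ for a positive integer $r$, it uses the identity $\binom{r}{l}\binom{l}{i}=\binom{r}{i}\binom{r-i}{l-i}$ to interchange the two summations and then collapses the inner alternating binomial sum, obtaining the closed form $H(-r)=r!\,\pi_0^{(p-1)r}/(pr)!$. The unit property then follows from $\pi_0^{p-1}=-p+up^p$ with ${\rm ord}\,u\ge 0$.

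Your primary termwise approach is a genuinely different argument and it does succeed, but you leave the decisive step unfinished. No induction on $l$ is needed: iterating Wilson's theorem (group the factors of $(pc)!$ into the $c$ multiples of $p$ and $c$ blocks of non-multiples, each congruent to $(p-1)!$) gives $p^{c}c!/(pc)!\equiv(-1)^{c}\pmod p$ for every $c\ge 0$, whence
\[
l!\,S_l=\sum_{c=0}^{l}\binom{l}{c}\frac{p^{c}c!}{(pc)!}\equiv\sum_{c=0}^{l}(-1)^{c}\binom{l}{c}=0\pmod p\qquad(l\ge 1).
\]
You must also control the discrepancy between $\pi_0^{c(p-1)}$ and $(-p)^{c}$, which you do not address explicitly: writing $\pi_0^{p-1}=-p+\delta$ with ${\rm ord}\,\delta\ge p$, one checks that the $c$-th error contribution has ${\rm ord}\ge -{\rm ord}\,l!+(p-1)$, so after multiplying by $(z)_l$ (using ${\rm ord}\,(z)_l\ge{\rm ord}\,l!$) it too has ${\rm ord}\ge p-1\ge 1$. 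Completed this way your argument yields the bonus $H(z)\equiv 1\pmod p$ on all of ${\mathbb Z}_p$, slightly sharper than the lemma; the paper's route trades this for the explicit closed form and the connection to $\Gamma_p$ noted in the Remark following the lemma.
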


\begin{proof}
It follows from the definition that
\[ \sigma_{pl} = (-\pi_0)^l\sum_{i=0}^l \frac{(-1)^i\pi_0^{(p-1)i}}{(pi)!(l-i)!}. \]
This gives
\[ H(z) = \sum_{l=0}^\infty (z)_l\sum_{i=0}^l \frac{(-1)^i\pi_0^{(p-1)i}}{(pi)!(l-i)!}. \]
We also have $\displaystyle (z)_l = (-1)^ll!\binom{-z}{l}$, so
\[ H(z) =  \sum_{l=0}^\infty (-1)^ll!\binom{-z}{l}\sum_{i=0}^l \frac{(-1)^i\pi_0^{(p-1)i}}{(pi)!(l-i)!}. \]
Let $r$ be a positive integer.  Then
\[ H(-r) = \sum_{l=0}^r (-1)^l\binom{r}{l}\sum_{i=0}^l \binom{l}{i}\frac{(-1)^ii!\pi_0^{(p-1)i}}{(pi)!}. \]
Using the relation $\binom{r}{l}\binom{l}{i} = \binom{r}{i}\binom{r-i}{l-i}$ this becomes
\[ H(-r) = \sum_{l=0}^r \sum_{i=0}^l (-1)^{l+i} \binom{r}{i}\binom{r-i}{l-i}\frac{i! \pi_0^{(p-1)i}}{(pi)!}. \]
Interchange the order of summation:
\begin{equation}
H(-r) = \sum_{i=0}^r (-1)^i\binom{r}{i} \frac{i! \pi_0^{(p-1)i}}{(pi)!}\sum_{l=i}^r (-1)^l\binom{r-i}{l-i}.
\end{equation}
The inner sum in (6.3) collapses:
\[ \sum_{l=i}^r (-1)^l \binom{r-i}{l-i} = \begin{cases} (-1)^r & \text{if $i=r$,} \\ 0 & \text{if $i<r$,} \end{cases} \]
so (6.3) reduces to
\begin{equation}
H(-r) =  \frac{r! \pi_0^{(p-1)r}}{(pr)!}.
\end{equation}
From \cite[unnumbered equation between Equations (3.23) and (3.24)]{AS1} we have $\pi_0^{p-1}= -p + up^p$, where ${\rm ord}\:u\geq 0$, so
\begin{equation}
 H(-r) = \frac{r! (-p + up^p)^r}{(pr)!} = \frac{r!\sum_{i=0}^r \binom{r}{i}(-p)^{r-i}(up^p)^i}{(pr)!}. 
 \end{equation}
 
For $i=0$, the contribution to the summation on the right-hand side of (6.5) is $r!(-p)^r/(pr)!$, which is easily checked to be a $p$-adic unit.  For $1\leq i\leq r$, the contribution to the summation on the right-hand side of (6.5) is
\begin{equation}
 \frac{r!\binom{r}{i}(-p)^{r-i}(up^p)^i}{(pr)!}. 
 \end{equation}
The $p$-ordinal of this numerator is greater than or equal to
\[ {\rm ord}\:r!p^{r-i}p^{pi} = \frac{r-s_r}{p-1} + r+ (p-1)i, \]
while the $p$-ordinal of this denominator equals
\[ {\rm ord}\:(pr)! = r + \frac{r-s_r}{p-1}. \]
Thus for $1\leq i\leq r$ expression (6.6) has $p$-ordinal greater than 0, hence (6.5) implies that $H(-r)$ is a $p$-adic unit for every positive integer $r$.  Lemma 6.2 follows because the negative integers are dense in ${\mathbb Z}_p$ and $H$ is continuous on ${\mathbb Z}_p$.
\end{proof}

{\bf Remark.}  The Dwork exponential function is defined by replacing $\pi_0$ by $\pi$, where $\pi^{p-1}=-p$, in the definition of $\sigma(t)$.  If we replaced $\pi_0$ by $\pi$ in the definition of $H(z)$, Equation (6.5) would hold with $u=0$.  
But from the functional equation for the $p$-adic gamma function $\Gamma_p$ we have $\Gamma_p(-pr) = r!(-p)^r/(pr)!$, so we would get $H(z) = \Gamma_p(pz)$.  This would lead to simpler formulas below, but the $p$-adic estimates arising from the use of $\pi$ in place of $\pi_0$ are not strong enough to prove Theorem~2.21.

 \section{The Dwork-Frobenius operator, part 1}

Since each $v_j$ is $p$-integral, for each $v_j$  there exists a unique rational number $v'_j$ in $[-1,0]$ such that $pv_j-v_j'\in\{0,-1,\dots,-(p-1)\}$.  Recall that if $v_j=-1$ then $v'_j=-1$ and if $v_j=0$ then $v'_j=0$.  Set $v' = (v'_1,\dots,v'_N)$.

We first construct for $-1<v_j\leq 0$ a Dwork-Frobenius map $\alpha:t^{v_j}C\{t\}\to t^{v'_j}C\{t\}$.
Let $\Phi:t^{v_j}C\{t\}\to t^{v'_j}C\{t\}$ be the map that replaces $t$ by $t^p$.  We denote by $\sigma(t):t^{v'_j}C\{t\}\to t^{v'_j}C\{t\}$ the map ``multiplication by $\sigma(t)$.''  Now define
\[ \alpha=\sigma(t)\circ\Phi:t^{v_j}C\{t\}\to t^{v'_j}C\{t\}. \]

Set 
\[ C^-\{t\} = \bigg\{\xi(t) = \sum_{l=-\infty}^0 c_lt^l\mid \xi(t)\in C\{t\}\bigg\}. \]
For $v_j=-1$ we construct a Dwork-Frobenius map $\alpha:t^{-1}C^-\{t\}\to t^{-1}C^-\{t\}$ by defining
\[ \alpha=\gamma\circ\sigma(t)\circ\Phi:t^{-1}C^-\{t\}\to t^{-1}C^-\{t\}. \]

Let $\xi'_j(t)$ be the series obtained by replacing $v_j$ by $v'_j$ in the definition of $\xi_j(t)$ (Equation (3.5)).  The following result is a variation of Boyarsky \cite{D1}.
\begin{theorem}
For $j=1,\dots,N$ we have
\[ \alpha(\xi_j(t)) = \pi_0^{-(p-1)v_j} \frac{H(-v_j)}{[v_j']_{pv_j-v_j'}}\xi'_j(t). \]
\end{theorem}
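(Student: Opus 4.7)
Following the Boyarsky--Dwork template referenced in the preamble, both $\alpha(\xi_j)$ and $\xi'_j$ will turn out to satisfy the same first-order linear ODE (with a compatible boundary correction when $v_j = -1$). The ODE's solution space inside the relevant Laurent series module is essentially one-dimensional, which forces proportionality; the scalar constant is then pinned down by matching the coefficient of $t^{pv_j}$.

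The ODEs come from direct termwise differentiation. Using the identity (1.1) to reindex the series for $\xi_j$, one sees that $\xi_j'(t) = \pi_0\xi_j(t)$ for $-1 < v_j \leq 0$, while for $v_j = -1$ the truncation of the sum in (3.5) at $l = 0$ produces a boundary term $\xi_j'(t) = \pi_0\xi_j(t) - t^{-1}$; the same analysis with $v_j'$ in place of $v_j$ gives the corresponding equation for $\xi'_j$. Differentiating $\alpha(\xi_j)(t) = \sigma(t)\xi_j(t^p)$ and using $\sigma'(t) = (\pi_0 - p\pi_0 t^{p-1})\sigma(t)$ (immediate from $\sigma(t) = \exp(\pi_0 t - \pi_0 t^p)$), the $-p\pi_0 t^{p-1}$ contribution from $\sigma'/\sigma$ cancels against the $+p\pi_0 t^{p-1}$ factor coming from the chain rule in $(\xi_j(t^p))'$, yielding $\alpha(\xi_j)'(t) = \pi_0\alpha(\xi_j)(t)$ in the regular case. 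In the boundary case one picks up an extra $-pt^{-1}$ after commuting $\gamma$ with $d/dt$ (valid on series supported in negative powers) and observing that $\gamma(t^{-1}\sigma(t)) = t^{-1}$.

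Extracting coefficients from the homogeneous ODE $f' = \pi_0 f$ gives the recursion $(v_j' + l)c_l = \pi_0 c_{l-1}$; this shows the solution space inside $t^{v_j'}C\{t\}$ is one-dimensional when $-1 < v_j' \leq 0$, and trivial inside $t^{-1}C^-\{t\}$ when $v_j' = -1$. For $-1 < v_j \leq 0$ I immediately conclude $\alpha(\xi_j) = C\xi'_j$. For $v_j = -1$, the residual $R = \alpha(\xi_j) - C\xi_j$ satisfies $R' = \pi_0 R - (p-C)t^{-1}$; once the single condition $R_{-p} = 0$ is known from the coefficient match below, the recursion $(n+1)R_{n+1} = \pi_0 R_n - (p-C)\delta_{n,-1}$ forces $R_n = 0$ for all $n \leq -1$ and, at $n = -1$, additionally extracts the identity $C = p$.

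The constant $C$ is then identified by evaluating the coefficient of $t^{pv_j}$ directly from $\alpha(\xi_j)(t) = \sigma(t)\xi_j(t^p)$. Only the terms with $l \leq 0$ and $i = -pl$ contribute, yielding $\sum_{k\geq 0}\sigma_{pk}[v_j]_{-k}\pi_0^{v_j-k} = \pi_0^{v_j}H(-v_j)$ by the definition of $H$ in Section 6, while the $\xi'_j$-side contributes $[v_j']_{pv_j - v_j'}\pi_0^{pv_j}$. Their ratio is the claimed $C = \pi_0^{-(p-1)v_j}H(-v_j)/[v_j']_{pv_j - v_j'}$. The main obstacle lies in the $v_j = -1$ case, where the zero-dimensional homogeneous solution space requires me to track the boundary terms in both ODEs carefully so that the coefficient match at $t^{-p}$ and the ODE recursion across $n = -1$ remain mutually consistent; as a byproduct the argument yields the nontrivial identity $\pi_0^{p-1}H(1)/[-1]_{-(p-1)} = p$.
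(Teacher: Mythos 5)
Your proof is correct and follows essentially the same route as the paper: both arguments show that $\alpha(\xi_j)$ and $\xi'_j$ satisfy the same first-order equation (your direct differentiation of $\sigma(t)\xi_j(t^p)$ is equivalent to the paper's commutation relation $D\circ\alpha = p\,\alpha\circ D$ with $D = t\frac{d}{dt}-\pi_0 t$ in Lemma 7.2), deduce proportionality from the essentially one-dimensional solution space, and fix the constant by comparing the coefficient of $t^{pv_j}$, exactly as in (7.5)--(7.6). The only organizational difference is at $v_j=-1$, where the paper uses the one-dimensional kernel of $\gamma\circ D$ while you run the inhomogeneous recursion off the matched coefficient $R_{-p}=0$; both are valid, and your byproduct identity $(-\pi_0)^{p-1}H(1)/(p-1)!=p$ is consistent with the constant appearing in (7.9).
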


For $v_j=0$ we have $v'_j=0$ and the assertion reduces to $\alpha(\xi_j( t)) = \xi'_j(t)$ which is trivial from the definitions.
For $-1\leq v_j<0$ we need a lemma.  Let $D$ be the differential operator
\[ D = t\frac{d}{dt} - \pi_0 t, \]
which maps $t^{v_j}C\{t\}$ (resp.\ $t^{v'_j}C\{t\}$) to itself.
\begin{lemma}
{\bf (a).}  For $-1<v_j\leq 0$, we have $D\circ\alpha = p\alpha\circ D$ as maps from $t^{v_j}C\{t\}$ to $t^{v'_j}C\{t\}$. \\
{\bf (b).}  For $v_j=-1$ we have $(\gamma\circ D)\circ\alpha = p\alpha\circ(\gamma\circ D)$ as maps from
$t^{-1}C^-\{t\}$ to $t^{-1}C^-\{t\}$.
\end{lemma}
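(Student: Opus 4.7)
The proof is essentially an exercise in bookkeeping, and the heart of it is a single cancellation. Let me write $E=t\,d/dt$, so $D=E-\pi_0 t$, and recall $\alpha(\xi)=\sigma(t)\xi(t^p)$ (dropping the auxiliary $\gamma$ until part (b)). Two elementary identities will drive the argument: first, $E$ intertwines with substitution via $E(\xi(t^p))=pt^p\xi'(t^p)=p\,\Phi(E\xi)$, so $E\circ\Phi=p\,\Phi\circ E$; second, from $\sigma(t)=\exp(\pi_0 t-\pi_0 t^p)$ one has $\sigma'(t)/\sigma(t)=\pi_0-p\pi_0 t^{p-1}$, whence $E\sigma=(\pi_0 t-p\pi_0 t^p)\sigma$. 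Both are immediate.

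For part (a), the plan is to apply the Leibniz rule:
\[ E(\alpha(\xi))=E\bigl(\sigma(t)\xi(t^p)\bigr)=(\pi_0 t-p\pi_0 t^p)\alpha(\xi)+\sigma(t)\cdot p\,(E\xi)(t^p). \]
Subtracting $\pi_0 t\cdot\alpha(\xi)$ produces the crucial cancellation of the $\pi_0 t$ term, leaving
\[ D(\alpha(\xi))=-p\pi_0 t^p\alpha(\xi)+p\alpha(E\xi)=p\alpha(E\xi-\pi_0 t\xi)=p\alpha(D\xi), \]
where I have used $\pi_0 t^p\alpha(\xi)=\sigma(t)\pi_0 t^p\xi(t^p)=\alpha(\pi_0 t\xi)$. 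This gives (a) directly.

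For part (b), $\alpha=\gamma\circ\sigma(t)\circ\Phi$, so the main additional task is to show that the truncation operator $\gamma$ can be harmlessly inserted or removed in the right places. The key observation is the identity $\gamma\circ D\circ\gamma=\gamma\circ D$ on $C\{t\}$: if $\eta=\sum_{l\in\mathbb Z}c_lt^l$, writing $\eta=\gamma\eta+(1-\gamma)\eta$ one checks that $D\bigl((1-\gamma)\eta\bigr)$ has only strictly positive powers of $t$, so $\gamma$ annihilates it. Applying this with $\eta=\sigma(t)\xi(t^p)\in C\{t\}$ gives
\[ (\gamma\circ D)\bigl(\alpha(\xi)\bigr)=\gamma\bigl(D\bigl(\gamma(\sigma(t)\xi(t^p))\bigr)\bigr)=\gamma\bigl(D(\sigma(t)\xi(t^p))\bigr)=p\,\gamma\bigl(\sigma(t)\Phi(D\xi)\bigr), \]
using the computation of (a) in the last step.

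On the other side, $p\alpha\bigl((\gamma\circ D)\xi\bigr)=p\,\gamma\bigl(\sigma(t)\Phi(\gamma(D\xi))\bigr)$. The difference between the two expressions lies in $\Phi$ applied to $D\xi-\gamma(D\xi)$. Since $\xi\in t^{-1}C^-\{t\}$ contains only strictly negative powers of $t$, a direct inspection of $D\xi=\sum_{l\le -1}lc_lt^l-\pi_0\sum_{l\le -1}c_lt^{l+1}$ shows its non-negative part consists of the single constant term $-\pi_0 c_{-1}$; thus $\Phi(D\xi)-\Phi(\gamma(D\xi))=-\pi_0 c_{-1}$ is a constant. Multiplying by $\sigma(t)\in 1+tC_p[[t]]$ produces a power series in non-negative powers of $t$, which $\gamma$ annihilates. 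This shows the two sides agree, proving (b). The only step requiring any care is the truncation identity $\gamma D\gamma=\gamma D$ and the verification that the leftover constant from $\xi$ disappears after multiplication by $\sigma(t)$ and truncation; everything else is formal.
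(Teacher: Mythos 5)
Your proof is correct, and part (a) takes a genuinely different route from the paper's. The paper verifies the identity monomial by monomial and uses the formal factorizations $\alpha=\exp(\pi_0 t)\circ\Phi\circ\exp(-\pi_0 t)$ and $D=\exp(\pi_0 t)\circ t\frac{d}{dt}\circ\exp(-\pi_0 t)$, which reduce everything to the same commutation $t\frac{d}{dt}\circ\Phi=p\,\Phi\circ t\frac{d}{dt}$ that you isolate as $E\circ\Phi=p\,\Phi\circ E$. Your alternative is a direct Leibniz computation driven by the logarithmic derivative $E\sigma=(\pi_0 t-p\pi_0 t^p)\sigma$, after which the $\pi_0 t$ terms cancel. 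What this buys you is that every operator in your computation ($E$, multiplication by $\sigma(t)$, $\Phi$) genuinely acts on $C\{t\}$ (multiplication by $\sigma(t)$ by Proposition 4.4, and $E$ commutes with the coefficientwise limits defining the product), so no restriction to monomials is needed; the paper's conjugation by $\exp(\pm\pi_0 t)$ is only formal and must be confined to monomials, but in exchange it makes transparent why $\xi_j(t)$ spans the kernel of $D$, which is exactly how the lemma is exploited immediately afterwards. For part (b) you follow essentially the paper's route: both arguments rest on $\gamma\circ D\circ\gamma=\gamma\circ D$, and where the paper invokes the general operator identity $\gamma\circ\sigma(t)\circ\Phi\circ(I-\gamma)=0$ on $t^{-1}C\{t\}$, you verify the equivalent fact concretely by computing that $(I-\gamma)(D\xi)$ is the single constant $-\pi_0 c_{-1}$ and that $\gamma$ annihilates $\sigma(t)$ times a constant. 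Both treatments are complete.
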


\begin{proof}
The operators $D$ and $\alpha$ are additive, i.~e., if $\xi(t) = \sum_{l\in {\mathbb Z}}c_l t^{v_j+l}\in t^{v_j}C\{t\}$, then
\[ D(\xi(t)) = \sum_{l\in {\mathbb Z}}c_l D(t^{v_j+l}) \]
and
\[ \alpha(\xi(t)) = \sum_{l\in {\mathbb Z}}c_l \alpha(t^{v_j+l}). \]
It thus suffices to verify the relation on monomials~$t^{v_j+l}$.

Suppose $-1<v_j\leq 0$.  On monomials, $\alpha$ factors formally as
\[ \alpha = \exp(\pi_0 t) \circ \Phi \circ \exp(-\pi_0 t) \]
and $D$ factors formally as
\[ D = \exp(\pi_0 t)\circ t\frac{d}{dt}\circ\exp(-\pi_0 t). \]
This reduces the assertion of part (a) to the obvious equality
\begin{equation}
 t\frac{d}{dt}\circ\Phi = p\Phi\circ t\frac{d}{dt}. 
 \end{equation}

If $I$ denotes the identity operator on $t^{-1}C\{t\}$, then it is straightforward to check that
$\gamma\circ D\circ(I-\gamma) = 0$ and $\gamma\circ(\sigma(t)\circ\Phi)\circ (I-\gamma) = 0$
as operators on $t^{-1}C\{t\}$.  These equalities imply that $\gamma\circ D\circ\gamma = \gamma\circ D$ and 
\[ \gamma\circ(\sigma(t)\circ\Phi)\circ\gamma = \gamma\circ(\sigma(t)\circ\Phi). \]
Using these relations, one reduces the assertion of part (b) to~(7.3).
\end{proof}

We determine the kernel of $D$ on $t^{v_j}C\{t\}$ for $-1<v_j\leq 0$.  Let 
\[ \xi(t) = \sum_{l=-\infty}^\infty c_l (\pi_0 t)^{v_j+l}\in t^{v_j}C\{t\}. \]
  Then
\[  D(\xi)= \sum_{l=-\infty}^\infty \big((v_j+l)c_l-c_{l-1}\big)(\pi_0 t)^{v_j+l}. \]
It follows that $D$ has a one-dimensional kernel generated by $\xi_j(t)$.

For $v_j=-1$ we determine the kernel of $\gamma\circ D$ on $t^{-1}C^-\{t\}$.  Let 
\[ \xi(t) = \sum_{l=-\infty}^0 c_l (\pi_0 t)^{-1+l}\in t^{-1}C^-\{t\}. \]
  Then
\[ \gamma\circ D(\xi) = \sum_{l=-\infty}^{0}\big((-1+l)c_l-c_{l-1}\big)(\pi_0 t)^{-1+l}. \]
It follows that $\gamma\circ D$ has a one-dimensional kernel again generated by $\xi_j(t)$.  

It follows from Lemma 7.2 that $\alpha$ maps the kernel of $D$ (resp.\ $\gamma\circ D$) in $t^{v_j}C\{t\}$ (resp.\ $t^{-1}C^-\{t\}$) to the kernel of $D$ (resp.\ $\gamma\circ D$) in $t^{v'_j}C\{t\}$ (resp.\ $t^{-1}C^-\{t\}$).  Since these kernels are all one-dimensional we have for all $j$
\begin{equation}
\alpha(\xi_j(t)) = \mu_j\xi'_j(t)
\end{equation}
for some $\mu_j\in {\mathbb C}_p$.  To finish the proof of Theorem~7.1 we need to compute $\mu_j$.  

Suppose first that $-1<v_j<0$.  To find $\mu_j$, we compute the coefficient of $t^{pv_j}$ on each side of~(7.4).  From the definition of $\xi'_j(t)$, the coefficient of $t^{pv_j}$ on the right-hand side of (7.4) is
\begin{equation}
\mu_j\pi_0^{pv_j}[v'_j]_{pv_j-v_j'}. 
\end{equation}
From the definitions, one computes that the coefficient of $t^{pv_j}$ on the left-hand side of (7.4) is
\begin{equation}
\pi_0^{v_j}\sum_{l=0}^\infty [v_j]_{-l}\sigma_{pl}\pi_0^{-l} = \pi_0^{v_j}H(-v_j),
\end{equation}
where $H(z)$ is defined in Section 6.  Comparing (7.5) and (7.6) then gives
\begin{equation}
\mu_j = \pi_0^{-(p-1)v_j}H(-v_j)/[v'_j]_{pv_j-v'_j},
\end{equation}
hence
\begin{equation}
\alpha(\xi_j(t)) = \pi_0^{-(p-1)v_j}\frac{H(-v_j)}{[v'_j]_{pv_j-v'_j}}\xi'_j(t).
\end{equation}
This proves Theorem 7.1 for $-1<v_j<0$.

Now suppose that $v_j=-1$, in which case $v'_j=-1$ also.  To compute $\mu_j$ we compute the coefficient of $t^{-p}$ on each side of (7.4).  For the right-hand side of (7.4) we take $l=(p-1)$ in the series expansion of $\xi'_j(t)$ (see (3.5)) to get
\[ \mu_j(-1)^{p-1}(p-1)!\pi_0^{-p}. \]
For the left-hand side of (7.4), a calculation from the definitions gives 
\[ \pi_0^{-1}\sum_{k=0}^\infty [-1]_{-k}\sigma_{pk}\pi_0^{-k} = \pi_0^{-1}H(1). \]
It follows that $\mu_j= (-\pi_0)^{p-1}H(1)/(p-1)!$, so
\begin{equation}
\alpha_j(\xi_j(t)) = (-\pi_0)^{p-1}\frac{H(1)}{(p-1)!}\xi'_j(t).
\end{equation}
Furthermore,
\[ (-\pi_0)^{p-1}\frac{H(1)}{(p-1)!} = \pi_0^{-(p-1)(-1)}\frac{H(1)}{[-1]_{-p+1}}, \]
proving Theorem 7.1 when $v_j=-1$.

\section{The Dwork-Frobenius operator, part 2}

The purpose of this section is to prove a result analogous to Theorem 7.1 for the~$\hat{\xi}_j(t)$.  For $-1<v_j\leq 0$, we define $\hat{\alpha}:t^{v_j}C\{t\}\to t^{v'_j}C\{t\}$ by $\hat{\alpha} = \theta(t)\circ\Phi$.  For $v_j=-1$ we define $\hat{\alpha}:t^{-1}C^-\{t\}\to t^{-1}C^-\{t\}$ by $\hat{\alpha} = \gamma\circ\theta(t)\circ\Phi$.  Let $\hat{\theta}_1(t): t^{v_j}C\{t\}\to t^{v_j}C\{t\}$ and $\hat{\theta}_1(t): t^{v'_j}C\{t\}\to t^{v'_j}C\{t\}$ be the maps ``multiplication by~$\hat{\theta}_1(t)$.''

\begin{proposition}
{\bf (a).}  We have $\hat{\alpha}\circ\hat{\theta}_1(t) = \hat{\theta}_1(t)\circ \alpha$ as maps from $t^{v_j}C\{t\}$ to~$t^{v'_j}C\{t\}$. \\
{\bf (b).}  We have $\hat{\alpha}\circ\big(\gamma\circ\hat{\theta}_1(t)\big) = \big(\gamma\circ\hat{\theta}_1(t)\big)\circ \alpha$ as maps from $t^{-1}C^-\{t\}$ to~$t^{-1}C^-\{t\}$.
\end{proposition}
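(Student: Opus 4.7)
The plan is to reduce both parts to the single scalar power-series identity
\[ \sigma(t)\hat{\theta}_1(t) = \theta(t)\hat{\theta}_1(t^p), \]
which is just a rewriting of the factorization $\exp(\pi_0 t - \pi_0 t^p) = \theta(t)\hat{\theta}_1(t^p)/\hat{\theta}_1(t)$ already derived at the start of Section~6. Each of the series $\hat{\theta}_1(t)$, $\theta(t)$, $\sigma(t)$, $\hat{\theta}_1(t^p)$ converges on a closed disk of radius $R>1$ by the estimates (3.7), (3.9), and (6.1), so Proposition~4.4 shows that multiplication by each one is a well-defined operator on $t^{v_j}C\{t\}$ (respectively $t^{v_j'}C\{t\}$). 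I also use the evident commutation rule $\Phi\circ f(t) = f(t^p)\circ \Phi$, valid for any Laurent series $f(t)$ that substitutes into $t^p$; this lets me slide $\Phi$ past multiplication by $\hat{\theta}_1(t)$, $\theta(t)$, etc.

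For part (a) the argument is now mechanical. Using the commutation rule and then the scalar identity above,
\begin{align*}
 \hat{\alpha}\circ\hat{\theta}_1(t)
 &= \theta(t)\circ\Phi\circ\hat{\theta}_1(t) = \theta(t)\circ\hat{\theta}_1(t^p)\circ\Phi \\
 &= \big(\theta(t)\hat{\theta}_1(t^p)\big)\circ\Phi = \big(\hat{\theta}_1(t)\sigma(t)\big)\circ\Phi
 = \hat{\theta}_1(t)\circ\sigma(t)\circ\Phi = \hat{\theta}_1(t)\circ\alpha,
\end{align*}
where the regrouping of consecutive multiplications is justified by Proposition~4.4 and Lemma~4.16.

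For part (b) the extra ingredient is a family of $\gamma$-insertion identities parallel to the one used in Section~7 for $\sigma(t)\circ\Phi$. Specifically, since $\hat{\theta}_1(t)$ and $\theta(t)$ both lie in $1 + t\,{\mathbb C}_p[[t]]$, multiplication by either one preserves the subspace of series with only nonnegative powers of $t$; consequently $\gamma\circ\hat{\theta}_1(t)\circ(I-\gamma)=0$ and $\gamma\circ\theta(t)\circ\Phi\circ(I-\gamma)=0$ on $t^{-1}C\{t\}$, i.e.\ $\gamma\circ\hat{\theta}_1(t)\circ\gamma = \gamma\circ\hat{\theta}_1(t)$ and $\gamma\circ\theta(t)\circ\Phi\circ\gamma = \gamma\circ\theta(t)\circ\Phi$. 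Together with the analogous identity for $\sigma(t)\circ\Phi$ already noted in Section~7, these let me freely insert or delete a $\gamma$ at any interior position in the compositions appearing in~(b). Applying them to open up both sides,
\[ \hat{\alpha}\circ\big(\gamma\circ\hat{\theta}_1(t)\big) = \gamma\circ\theta(t)\circ\Phi\circ\hat{\theta}_1(t) = \gamma\circ\big(\theta(t)\hat{\theta}_1(t^p)\big)\circ\Phi \]
and
\[ \big(\gamma\circ\hat{\theta}_1(t)\big)\circ\alpha = \gamma\circ\hat{\theta}_1(t)\circ\sigma(t)\circ\Phi = \gamma\circ\big(\hat{\theta}_1(t)\sigma(t)\big)\circ\Phi, \]
and equality follows from the scalar identity of Paragraph~1. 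The main obstacle is the bookkeeping in this second part: several of the intermediate compositions do not individually preserve $t^{-1}C^-\{t\}$, so one must be careful to justify each $\gamma$-insertion using the appropriate one of the three identities above; once those commutation rules are in hand, the proof is essentially the same scalar identity as in part~(a).
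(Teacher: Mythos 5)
Your proof is correct and follows essentially the same route as the paper: the underlying identity is the factorization $\sigma(t)\hat{\theta}_1(t)=\theta(t)\hat{\theta}_1(t^p)$ (the paper phrases it as a formal factorization of $\hat{\alpha}$ through $\hat{\theta}(t)$ and $\exp(\pi_0 t)$ applied monomial by monomial, whereas you phrase it as an identity of honestly convergent multipliers and slide $\Phi$ past $\hat{\theta}_1(t)$), and the $\gamma$-insertion identities you use in part (b) are exactly the ones the paper uses. The only quibble is that Lemma~4.16 concerns disjoint sets of variables and does not literally supply the one-variable associativity $(H_1H_2)\xi = H_1(H_2\xi)$ needed for your regroupings, though that associativity follows at once from the estimates in the proof of Proposition~4.4.
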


\begin{proof}
All these operators are additive so it suffices to verify these relations on monomials.  On monomials, we can apply formal factorizations of these operators.  From the definitions
\begin{align*}
\hat{\alpha}\circ\hat{\theta}_1(t) &= \theta(t)\circ\Phi\circ\hat{\theta}_1(t) \\
 &=\hat{\theta}(t)\circ\Phi\circ\frac{1}{\hat{\theta}(t)}\circ\hat{\theta}_1(t) \\
  &= \hat{\theta}_1(t)\exp(\pi_0 t)\circ\Phi\circ\frac{1}{\exp(\pi_0 t)} \\
   &= \hat{\theta}_1(t)\sigma(t)\circ\Phi \\
    &= \hat{\theta}_1(t)\circ\alpha,
    \end{align*}
where the second equality follows from $\theta(t) = \hat{\theta}(t)/\hat{\theta}(t^p)$ and the third equality follows from
$\theta(t) = \hat{\theta}_1(t)\exp(\pi_0 t)$.  This proves part (a).  

From the definitions
\[ \hat{\alpha}\circ\gamma\circ\hat{\theta}_1(t) = \gamma\circ\theta(t)\circ\Phi\circ\gamma\circ\hat{\theta}_1(t). \]
We have $\gamma\circ\theta(t)\circ\Phi\circ(I-\gamma) = 0$ as operators on $t^{-1}C^-\{t\}$, so
\[  \hat{\alpha}\circ\gamma\circ\hat{\theta}_1(t) = \gamma\circ\theta(t)\circ\Phi\circ\hat{\theta}_1(t). \]
Similarly,
\[ \gamma\circ\hat{\theta}_1(t)\circ\gamma\circ\sigma(t)\circ\Phi = \gamma\circ\hat{\theta}_1(t)\sigma(t)\circ\Phi. \]
The assertion of part (b) now follows from part (a).
\end{proof}

Let $\hat{\xi}_j'(t)$ be defined by replacing $v_j$ by $v'_j$ in the definition of $\hat{\xi}_j(t)$.  Proposition~8.1 implies the analogue of Theorem 7.1.
\begin{theorem}
For $j=1,\dots,N$ we have
\[ \hat{\alpha}(\hat{\xi}_j(t)) = \pi_0^{-(p-1)v_j} \frac{H(-v_j)}{[v_j']_{pv_j-v_j'}}\hat{\xi}'_j(t). \]
\end{theorem}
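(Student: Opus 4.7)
The plan is to deduce Theorem 8.3 directly from Proposition 8.1 together with Theorem 7.1, so that the whole statement reduces to a one-line commutation-plus-substitution argument once we have interpreted $\hat{\xi}_j(t)$ as the image of $\xi_j(t)$ under a suitable ``multiplication by $\hat{\theta}_1(t)$'' operator.

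First I would unpack the definitions. For $-1<v_j\le 0$, equation (3.10) says precisely that $\hat{\xi}_j(t)$ equals the value of the operator ``multiplication by $\hat{\theta}_1(t)$'' applied to $\xi_j(t)$, and similarly $\hat{\xi}'_j(t)$ equals that same operator applied to $\xi'_j(t)$. For $v_j=-1$, equation (3.14) gives $\hat{\xi}_j(t)=(\gamma\circ\hat{\theta}_1(t))(\xi_j(t))$ and, since $v'_j=-1$ in this case as well, $\hat{\xi}'_j(t)=(\gamma\circ\hat{\theta}_1(t))(\xi'_j(t))$. Note that the trichotomy $v_j=0$, $-1<v_j<0$, $v_j=-1$ is preserved by $v_j\mapsto v'_j$ (as recalled at the start of Section 7), so these identifications make sense on both sides.

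Next I would apply $\hat{\alpha}$ to $\hat{\xi}_j(t)$ and invoke Proposition 8.1 to slide the ``multiplication by $\hat{\theta}_1(t)$'' (or $\gamma\circ\hat{\theta}_1(t)$) through to the outside, turning $\hat{\alpha}\circ\hat{\theta}_1(t)(\xi_j(t))$ into $\hat{\theta}_1(t)\bigl(\alpha(\xi_j(t))\bigr)$ in case (a), and $\hat{\alpha}\circ(\gamma\circ\hat{\theta}_1(t))(\xi_j(t))$ into $(\gamma\circ\hat{\theta}_1(t))\bigl(\alpha(\xi_j(t))\bigr)$ in case (b). Finally, substitute Theorem 7.1 to rewrite $\alpha(\xi_j(t))$ as the scalar $\pi_0^{-(p-1)v_j}H(-v_j)/[v'_j]_{pv_j-v'_j}$ times $\xi'_j(t)$; pulling that scalar out of the linear operators $\hat{\theta}_1(t)$ and $\gamma$ produces $\pi_0^{-(p-1)v_j}H(-v_j)/[v'_j]_{pv_j-v'_j}$ times $\hat{\xi}'_j(t)$, exactly the desired formula.

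There is no serious obstacle: all the heavy lifting sits in Proposition 8.1, where the formal factorizations $\theta(t)=\hat{\theta}(t)/\hat{\theta}(t^p)=\hat{\theta}_1(t)\exp(\pi_0 t)$ and the vanishing $\gamma\circ\theta(t)\circ\Phi\circ(I-\gamma)=0$ were already exploited, and in Theorem 7.1, which furnished the scalar. The only thing one must be a touch careful about is that the operators involved are additive and that the displayed commutation is a genuine equality on the spaces $t^{v_j}C\{t\}$ and $t^{-1}C^-\{t\}$ (rather than just on monomials), but this is guaranteed by Proposition 8.1 as stated and by the fact that multiplication by $\hat{\theta}_1(t)$ is a well-defined operator on $C\{t\}$ via Proposition 4.4.
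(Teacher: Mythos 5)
Your proposal is correct and is essentially identical to the paper's proof: the paper likewise writes $\hat{\xi}_j(t)$ via (3.10) (resp.\ (3.14)), commutes $\hat{\alpha}$ past multiplication by $\hat{\theta}_1(t)$ (resp.\ $\gamma\circ\hat{\theta}_1(t)$) using Proposition 8.1(a) (resp.\ (b)), substitutes Theorem 7.1 for $\alpha(\xi_j(t))$, and pulls the scalar out. No differences worth noting.
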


\begin{proof}
Suppose first that $-1<v_j\leq 0$.  We have
\begin{align*}
\hat{\alpha}(\hat{\xi}_j(t)) &= \hat{\alpha}\big(\hat{\theta}_1(t)\xi_j(t)\big) \\
 &= \hat{\theta}_1(t)\alpha(\xi_j(t)) \\
  &= \hat{\theta}_1(t) \pi_0^{-(p-1)v_j} \frac{H(-v_j)}{[v_j']_{pv_j-v_j'}}\xi'_j(t) \\
   &= \pi_0^{-(p-1)v_j} \frac{H(-v_j)}{[v_j']_{pv_j-v_j'}}\hat{\xi}'_j(t),
   \end{align*}
where the first and fourth equalities follow from (3.10), the second equality follows from Proposition 8.1(a), and the third equality follows from Theorem~7.1. 

Now suppose that $v_j=-1$.  We have
\begin{align*}
\hat{\alpha}(\hat{\xi}_j(t)) &= \hat{\alpha}\big(\gamma(\hat{\theta}_1(t)\xi_j(t))\big) \\
 &= \gamma\big(\hat{\theta}_1(t)\alpha(\xi_j(t))\big) \\
  &= \gamma\big(\hat{\theta}_1(t) \pi_0^{-(p-1)v_j} \frac{H(-v_j)}{[v_j']_{pv_j-v_j'}}\xi'_j(t)\big) \\
   &= \pi_0^{-(p-1)v_j} \frac{H(-v_j)}{[v_j']_{pv_j-v_j'}}\hat{\xi}'_j(t),
   \end{align*}
where the first and fourth equalities follow from (3.14), the second equality follows from Proposition 8.1(b), and the third equality follows from Theorem~7.1. 
\end{proof}

To facilitate separation of variables in the next section we replace $t$ by $t_j$ in each~$\hat{\xi}_j$.  Let $\Phi_j$ be the operator that replaces $t_j$ by $t_j^p$ and define $\gamma_j$ to be the operator on series in $t_j$
\[ \gamma_j\bigg(\sum_{l\in{\mathbb Z}} c_lt_j^l\bigg) = \sum_{l=-\infty}^{-1} c_lt_j^l. \]
Replace $\hat{\alpha}$ by $\hat{\alpha}_j$, where $\hat{\alpha}_j = \theta(t_j)\circ\Phi_j$ for $-1<v_j\leq 0$ and $\hat{\alpha}_j = \gamma_j\circ\theta(t_j)\circ\Phi_j$ for $v_j=-1$.  Then Theorem 8.2 becomes
\begin{equation}
\hat{\alpha}_j(\hat{\xi}_j(t_j)) = \pi_0^{-(p-1)v_j} \frac{H(-v_j)}{[v_j']_{pv_j-v_j'}}\hat{\xi}'_j(t_j).
\end{equation}

\section{Dwork-Frobenius on hypergeometric series}

Put $\hat{\xi}_v(t_1,\dots,t_N) = \prod_{j=1}^N \hat{\xi}_j(t_j)$.  Then $\hat{\xi}_v(t_1,\dots,t_N)\in t^vC_E\{t_1,\dots,t_N\}$ by Lemma~5.2 ($C_E\{t_1,\dots,t_N\}$ is defined in (4.2)).  We also define
\[ \hat{\xi}_{v'}(t_1,\dots,t_N) = \prod_{j=1}^N \hat{\xi}'_j(t_j)\in t^{v'}C_E\{t_1,\dots,t_N\}. \]

Let $\Phi_N:t^vC\{t_1,\dots,t_N\}\to t^{v'}C\{t_1,\dots,t_N\}$, be the operator that replaces each $t_j$ by $t_j^p$.  Put $\theta(t_1,\dots,t_N) = \prod_{j=1}^N \theta(t_j)$.  Multiplication by $\theta(t_1,\dots,t_N)$ defines a map
\[ \theta(t_1,\dots,t_N):t^{v'}C_E\{t_1,\dots,t_N\}\to t^{v'}C_{E_+}\{t_1,\dots,t_N\}. \]
Let $\gamma_E:t^{v'}C\{t_1,\dots,t_N\}\to t^{v'}C_E\{t_1,\dots,t_N\}$ be defined by 
\[ \gamma_E\bigg(t^{v'}\sum_{l\in{\mathbb Z}^N} c_lt^l\bigg) = t^{v'}\sum_{l\in E}c_lt^l. \]
Finally we define $\hat{\alpha}_N:t^vC_E\{t_1,\dots,t_N\}\to t^{v'}C_E\{t_1,\dots,t_N\}$ by 
\[ \hat{\alpha}_N = \gamma_E\circ \theta(t_1,\dots,t_N)\circ\Phi_N. \]
By (4.15), (8.3), and a calculation we have
\begin{align}
\hat{\alpha}_N(\hat{\xi}_v(t_1,\dots,t_N)) &= \prod_{j=1}^N \hat{\alpha}_j(\hat{\xi}_j(t_j)) \\ \nonumber
 & = \bigg(\prod_{j=1}^N \pi_0^{-(p-1)v_j} \frac{H(-v_j)}{[v_j']_{pv_j-v_j'}}\bigg)\hat{\xi}_{v'}(t_1,\dots,t_N).
\end{align}
Since $\sum_{j=1}^N v_j{\bf a}_j = \beta$ and $w({\bf a}_j)=1$ for all $j$ we have $\sum_{j=1}^N v_j = w(\beta)$, so this equation simplifies to
\begin{equation}
\hat{\alpha}_N(\hat{\xi}_v(t_1,\dots,t_N)) = \pi_0^{-(p-1)w(\beta)} \bigg(\prod_{j=1}^N  \frac{H(-v_j)}{[v_j']_{pv_j-v_j'}}\bigg)\hat{\xi}_{v'}(t_1,\dots,t_N).
\end{equation}
Note that the factor $\prod_{j=1}^N  \frac{H(-v_j)}{[v_j']_{pv_j-v_j'}}$ is a $p$-adic unit.  The factors $H(-v_j)$ are $p$-adic units by Lemma 6.2 and the $[v_j']_{pv_j-v'_j}$ are $p$-adic units by a direct calculation.

From (3.18) we have
\begin{equation}
\hat{\xi}_v(\Lambda_1x^{{\bf a}_1},\dots,\Lambda_N x^{{\bf a}_N}) = \sum_{u\in\beta + {\mathbb Z}A} G_u(\Lambda)\pi_0^{w(u)}x^u. 
\end{equation}
We set
\begin{equation}
 G_v(\Lambda,x) =  \sum_{u\in\beta + {\mathbb Z}A} G_u(\Lambda)\pi_0^{w(u)}x^u. 
 \end{equation}
Put $\beta' = \sum_{j=1}^N v'_j{\bf a}_j$.  As in (3.18) we have
\begin{equation}
\prod_{j=1}^N \hat{\xi}'_j(\Lambda_jx^{{\bf a}_j}) = \sum_{u'\in\beta'+{\mathbb Z}A} G_{u'}(\Lambda)\pi_0^{w(u')}x^{u'},
\end{equation}
where
\begin{equation}
G_{u'}(\Lambda_1,\dots,\Lambda_N) = \sum_{l\in E(u')} [v']_l\Lambda^{v'+l}.
\end{equation}
As in (9.3) we have
\begin{equation}
\hat{\xi}_{v'}(\Lambda_1x^{{\bf a}_1},\dots,\Lambda_Nx^{{\bf a}_N}) = \sum_{u'\in\beta'+{\mathbb Z}A} G_{u'}(\Lambda)\pi_0^{w(u')}x^{u'},
\end{equation} 
and as in (9.4) we set
\begin{equation}
G_{v'}(\Lambda,x) = \sum_{u'\in\beta'+{\mathbb Z}A} G_{u'}(\Lambda)\pi_0^{w(u')}x^{u'}.
\end{equation}

For $S\subseteq{\mathbb Z}^N$ we set
\begin{multline}
C_S\{\Lambda_1x^{{\bf a}_1},\dots,\Lambda_Nx^{{\bf a}_N}\}\\ 
=\bigg\{\xi = \sum_{l=(l_1,\dots,l_N)\in S} c_l(\Lambda_1x^{{\bf a}_1})^{l_1}\cdots(\Lambda_Nx^{{\bf a}_N})^{l_N}\mid \text{$\lvert c_l\rvert\delta^{\lvert l\rvert_\infty}\to 0$ for all $\delta<1$}\bigg\}.
\end{multline}
The change of variable $t_j\to \Lambda_jx^{a_j}$ induces a ${\mathbb C}_p$-isomorphism
\begin{equation}
 t^vC_E\{t_1,\dots,t_N\}\to \Lambda^vx^\beta C_E\{\Lambda_1x^{{\bf a}_1},\dots,\Lambda_Nx^{{\bf a}_N}\}. 
 \end{equation}
 
 We let $\alpha^*:\Lambda^vx^\beta C_E\{\Lambda_1x^{{\bf a}_1},\dots,\Lambda_Nx^{{\bf a}_N}\}\to \Lambda^{v'}x^{\beta'}C_E\{\Lambda_1x^{{\bf a}_1},\dots,\Lambda_Nx^{{\bf a}_N}\}$ be the map corresponding to $\hat{\alpha}_N$ under this isomorphism.  Like $\hat{\alpha}_N$, the map $\alpha^*$ can be written as a composition.  Let 
\[ \Phi^*:\Lambda^vx^\beta C_E\{\Lambda_1x^{{\bf a}_1},\dots,\Lambda_Nx^{{\bf a}_N}\}\to \Lambda^{v'}x^{\beta'} C_E\{\Lambda_1x^{{\bf a}_1},\dots,\Lambda_Nx^{{\bf a}_N}\} \]
 be the map which replaces each $\Lambda_j$ by $\Lambda_j^p$ and each $x_i$ by $x_i^p$.  Put 
 \begin{equation}
 \theta(\Lambda,x) = \prod_{j=1}^N \theta(\Lambda_jx^{\bf a}_j).
 \end{equation}
 Multiplication by $\theta(\Lambda,x)$ defines a map
 \[ \theta(\Lambda,x):\Lambda^{v'}x^{\beta'} C_E\{\Lambda_1x^{{\bf a}_1},\dots,\Lambda_Nx^{{\bf a}_N}\}\to \Lambda^{v'}x^{\beta'} C_{E_+}\{\Lambda_1x^{{\bf a}_1},\dots,\Lambda_Nx^{{\bf a}_N}\}. \]
 Then
\begin{equation}
\alpha^* = \gamma_E\circ \theta(\Lambda,x)\circ\Phi^*,
\end{equation}
where $\gamma_E:\Lambda^{v'}x^{\beta'}C\{\Lambda_1x^{{\bf a}_1},\dots,\Lambda_Nx^{{\bf a}_N}\}\to \Lambda^{v'}x^{\beta'}C_E\{\Lambda_1x^{{\bf a}_1},\dots,\Lambda_Nx^{{\bf a}_N}\}$ is the map
\[ \gamma_E\bigg(\Lambda^{v'}x^{\beta'}\sum_{l\in {\mathbb Z}^N} c_l\prod_{j=1}^N (\Lambda_j x^{{\bf a}_j})^{l_j}\bigg) = \Lambda^{v'}x^{\beta'}\sum_{l\in E} c_l\prod_{j=1}^N (\Lambda_j x^{{\bf a}_j})^{l_j}. \]
Using the isomorphism (9.10), Equation (9.2) gives the following result.
\begin{theorem}
We have 
\begin{equation}
 \alpha^*(G_v(\Lambda,x)) = \pi_0^{-(p-1)w(\beta)}\bigg(\prod_{j=1}^N \frac{H(-v_j)}{[v_j']_{pv_j-v_j'}}\bigg) G_{v'}(\Lambda,x). 
 \end{equation}
\end{theorem}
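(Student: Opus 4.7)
The claim is essentially a transport-of-structure consequence of equation~(9.2), passed through the ${\mathbb C}_p$-isomorphism (9.10). My plan is to make each of the three pieces of that transport explicit.

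\medskip

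\noindent\textbf{Step 1: identify the two sides as images under (9.10).}  By construction $\hat{\xi}_v(t_1,\dots,t_N) = \prod_{j=1}^N \hat{\xi}_j(t_j)$ lies in $t^v C_E\{t_1,\dots,t_N\}$ (Lemma 4.15 and Proposition 5.1). Substituting $t_j\mapsto \Lambda_j x^{{\bf a}_j}$ gives $\prod_{j=1}^N \hat{\xi}_j(\Lambda_j x^{{\bf a}_j})$, which by (3.18) and the definition (9.4) equals $G_v(\Lambda,x)$. The same argument with $v'$ in place of $v$, using (9.7)--(9.9), shows that $G_{v'}(\Lambda,x)$ is the image of $\hat{\xi}_{v'}(t_1,\dots,t_N)$ under (9.10).

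\medskip

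\noindent\textbf{Step 2: verify that $\alpha^*$ is the conjugate of $\hat{\alpha}_N$.}  This is the one nontrivial point and it is exactly what the set-up of the section is designed to yield. I would check it factor by factor. Under the substitution $t_j\mapsto \Lambda_j x^{{\bf a}_j}$, the operator $\Phi_N$ (replace each $t_j$ by $t_j^p$) sends $\Lambda_j x^{{\bf a}_j}$ to $\Lambda_j^p x^{p{\bf a}_j}$, which is precisely the effect of $\Phi^*$ applied after substitution, so $\Phi^*$ corresponds to $\Phi_N$. Next, $\theta(t_1,\dots,t_N)=\prod_{j=1}^N \theta(t_j)$ pulls back to $\prod_{j=1}^N \theta(\Lambda_j x^{{\bf a}_j})=\theta(\Lambda,x)$; the fact that multiplication by this series is a well-defined operator on the relevant $C$-space follows from Proposition 4.4 combined with the commutativity statement of Lemma 4.17, and the output lies in the $E_+$-supported subspace since $\theta$ is a power series with only nonnegative exponents. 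Finally, $\gamma_E$ is defined on both sides to be the projection onto the monomials whose exponent vector (in $t_j$, respectively in $\Lambda_j x^{{\bf a}_j}$) lies in $E$, so the two projections correspond under (9.10). Composing, $\alpha^* = \gamma_E\circ\theta(\Lambda,x)\circ\Phi^*$ is the conjugate of $\hat{\alpha}_N = \gamma_E\circ\theta(t_1,\dots,t_N)\circ\Phi_N$.

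\medskip

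\noindent\textbf{Step 3: apply (9.2) and transport.}  Applying $\alpha^*$ to $G_v(\Lambda,x)$ is, by Steps 1 and 2, the same as applying $\hat{\alpha}_N$ to $\hat{\xi}_v(t_1,\dots,t_N)$ and then substituting $t_j\mapsto \Lambda_j x^{{\bf a}_j}$. By (9.2) this gives
\[
\pi_0^{-(p-1)w(\beta)}\bigg(\prod_{j=1}^N \frac{H(-v_j)}{[v_j']_{pv_j-v_j'}}\bigg)\,\hat{\xi}_{v'}(t_1,\dots,t_N),
\]
and substituting $t_j\mapsto \Lambda_j x^{{\bf a}_j}$ then converts $\hat{\xi}_{v'}$ into $G_{v'}(\Lambda,x)$ by Step 1, producing the desired identity.

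\medskip

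\noindent\textbf{Main obstacle.}  The only real subtlety is Step 2, and specifically the consistency of the multiplication by $\theta(\Lambda,x)$: one must be sure that the product $\prod_{j=1}^N \theta(\Lambda_j x^{{\bf a}_j})$ is a bona fide operator on $\Lambda^{v'}x^{\beta'}C_E\{\Lambda_1x^{{\bf a}_1},\dots,\Lambda_N x^{{\bf a}_N}\}$ landing in the $E_+$-supported subspace, and that this product commutes with the substitution (9.10) with the single-variable operators $\theta(t_j)$. This is precisely what Proposition 4.4 and Lemma 4.17 are designed for, so the obstacle is only one of bookkeeping rather than of substance.
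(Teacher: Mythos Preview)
Your proposal is correct and follows exactly the paper's approach: the paper's proof is the single sentence ``Using the isomorphism (9.10), Equation (9.2) gives the following result,'' and your Steps 1--3 simply unpack that sentence. Your Step 2 in particular makes explicit what the paper takes for granted, namely that $\alpha^*$ is by definition the conjugate of $\hat{\alpha}_N$ under (9.10) and that the factor-by-factor description (9.12) is consistent with this.
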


\section{Restriction to ${\mathcal M}$}

In general, not all the hypergeometric series that appear in the generating series $G_v(\Lambda,x)$ will have $p$-integral coefficients.  We need to truncate $G_v(\Lambda,x)$ to include only those series that are potentially $p$-integral and we need a version of Theorem~9.13 for these truncations.  

Let
\[ \gamma_{\mathcal M}: \Lambda^vx^\beta C_{E_+}\{\Lambda_1x^{{\bf a}_1},\dots,\Lambda_Nx^{{\bf a}_N}\}\to \Lambda^vx^\beta C_E\{\Lambda_1x^{{\bf a}_1},\dots,\Lambda_Nx^{{\bf a}_N}\} \]
be the ${\mathbb C}_p$-linear map defined by
\[ \gamma_{\mathcal M}(\Lambda^{v+l}x^{\sum_{j=1}^N (v_j+l_j){\bf a}_j}) = \begin{cases} 
\Lambda^{v+l}x^{\sum_{j=1}^N (v_j+l_j){\bf a}_j} & \text{if $\sum_{j=1}^N (v_j+l_j){\bf a}_j\in{\mathcal M}$,} \\
0 & \text{if $\sum_{j=1}^N (v_j+l_j){\bf a}_j\not\in{\mathcal M}$.} \end{cases} \]
Note that the image of $\gamma_{\mathcal M}$ lies in $\Lambda^vx^\beta C_E\{\Lambda_1x^{{\bf a}_1},\dots,\Lambda_Nx^{{\bf a}_N}\}$ by Proposition~2.9.  Similarly we define
\[ \gamma_{{\mathcal M}'}: \Lambda^{v'}x^{\beta'} C_{E_+}\{\Lambda_1x^{{\bf a}_1},\dots,\Lambda_Nx^{{\bf a}_N}\}\to \Lambda^{v'}x^{\beta'} C_E\{\Lambda_1x^{{\bf a}_1},\dots,\Lambda_Nx^{{\bf a}_N}\}. \]

Put $G_{v}^{\mathcal M}(\Lambda,x) = \gamma_{\mathcal M} (G_v(\Lambda,x))$ and $G_{v'}^{{\mathcal M}'}(\Lambda,x) = \gamma_{{\mathcal M}'} (G_{v'}(\Lambda,x)$).  Thus (see (9.4) and (9.8))
\[ G_{v}^{\mathcal M}(\Lambda,x) = \sum_{u\in{\mathcal M}}G_u(\Lambda)\pi_0^{w(u)}x^u \]
and
\[ G_{v'}^{{\mathcal M}'}(\Lambda,x) = \sum_{u'\in{\mathcal M}'}G_{u'}(\Lambda)\pi_0^{w(u')}x^{u'}. \]
 
 Consider condition (2.20) for $i=1$, i.~e.,
 \begin{equation}
 w(\beta') = \max\{w(u')\mid u'\in{\mathcal M}'\}.
 \end{equation}
 
 \begin{lemma}
Suppose that $\beta'$ satisfies $(10.1)$.  Then $\gamma_{{\mathcal M}'}\circ(I-\gamma_E)=0$ on \\ 
$\Lambda^{v'}x^{\beta'} C_{E_+}\{\Lambda_1x^{{\bf a}_1},\dots,\Lambda_Nx^{{\bf a}_N}\}$.
\end{lemma}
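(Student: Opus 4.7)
The plan is to unravel what the two projection operators do to a monomial and then quote Proposition 2.9 applied with $\beta',v'$ in place of $\beta,v$.

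First I would observe that $\gamma_E$ kills exactly the monomials indexed by $l\in E_+\setminus E$, so to prove $\gamma_{{\mathcal M}'}\circ(I-\gamma_E)=0$ it suffices to check that $\gamma_{{\mathcal M}'}$ annihilates every monomial of the form $\Lambda^{v'+l}x^{\sum_{j=1}^N(v'_j+l_j){\bf a}_j}$ with $l\in E_+\setminus E$. By definition of $\gamma_{{\mathcal M}'}$, this amounts to showing that for such an $l$ one has
\[ u':=\sum_{j=1}^N(v'_j+l_j){\bf a}_j\notin{\mathcal M}'. \]
Being in $E_+\setminus E$ means $l_j\geq 0$ whenever $v_j=0$ and there is at least one index $j_0$ with $v_{j_0}=-1$ and $l_{j_0}>0$.

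Next I would observe that $v$ and $v'$ share the same set of $(-1)$-coordinates and the same set of zero coordinates (by the construction of $v'_j$ from $v_j$ in Section~7), so the sets $E$ and $E_+$ are literally the same whether defined in terms of $v$ or $v'$. This is the compatibility needed to transport Proposition~2.9 to the primed setting.

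Now I would suppose for contradiction that $u'\in{\mathcal M}'$. The hypothesis (10.1) is exactly the condition (2.3) for $\beta'$ in place of $\beta$, and $u'=\sum_{j=1}^N(v'_j+l_j){\bf a}_j$ with $l\in E_+$ is an equation of the form (2.7) for the primed data. Applying Proposition~2.9 with $(\beta,v,u)$ replaced by $(\beta',v',u')$ then forces $l_j\leq 0$ for every $j$ with $v'_j=-1$, i.e.\ $l\in E$. This contradicts $l\in E_+\setminus E$, so $u'\notin{\mathcal M}'$ and the monomial is indeed killed by $\gamma_{{\mathcal M}'}$.

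The only mild subtlety (hardly an obstacle) is the bookkeeping check that Proposition~2.9 applies verbatim after the substitution: one must verify that the indexing sets $E$, $E_+$ used in its statement are the same sets that arise on the $v'$-side, which is immediate from the fact that the $(-1)$- and $0$-coordinate patterns of $v$ and $v'$ agree.
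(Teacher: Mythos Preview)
Your proof is correct and follows essentially the same approach as the paper: reduce to monomials, note that $(I-\gamma_E)$ leaves only those with $l\in E_+\setminus E$, and then apply (the contrapositive of) Proposition~2.9 with the primed data to conclude $u'\notin{\mathcal M}'$. Your explicit remark that the sets $E$ and $E_+$ are the same whether defined from $v$ or from $v'$ is a bookkeeping point the paper leaves implicit, but the argument is otherwise identical.
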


\begin{proof}
Consider a monomial 
\[ \xi=\Lambda^{v'}x^{\beta'}(\Lambda_1x^{{\bf a}_1})^{l_1}\cdots (\Lambda_Nx^{{\bf a}_N})^{l_N} \]
with $l=(l_1,\dots,l_N)\in E_+$.  If $l\in E$ then $\gamma_E(\xi) = \xi$ so $(I-\gamma_E)(\xi) = 0$.  If $l\not\in E$ then $\gamma_E(\xi)=0$ so $(I-\gamma_E)(\xi) = \xi$.  But since $\beta'$ satisfies (10.1) and $l\not\in E$, Proposition~2.9 implies that ${\beta' + \sum_{j=1}^N l_j{\bf a}_j}\not\in{\mathcal M}'$, hence $\gamma_{{\mathcal M}'}(\xi) = 0$.
\end{proof}

Define
\begin{multline*}
 \alpha^*_{{\mathcal M}'} := 
 \gamma_{{\mathcal M}'}\circ\Theta(\Lambda,x)\circ\Phi^*:\Lambda^vx^\beta C_{E}\{\Lambda_1x^{{\bf a}_1},\dots,\Lambda_Nx^{{\bf a}_N}\}\\
 \to \Lambda^{v'}x^{\beta'} C_E\{\Lambda_1x^{{\bf a}_1},\dots,\Lambda_Nx^{{\bf a}_N}\}.
 \end{multline*}
\begin{corollary}
Suppose that $\beta'$ satisfies $(10.1)$.  Then 
\begin{equation}
\alpha_{{\mathcal M}'}^*(G_v(\Lambda,x)) = \pi_0^{-(p-1)w(\beta)}\bigg(\prod_{j=1}^N \frac{H(-v_j)}{[v_j']_{pv_j-v_j'}}\bigg)G_{v'}^{{\mathcal M}'}(\Lambda,x). 
\end{equation}
\end{corollary}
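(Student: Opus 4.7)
The plan is to deduce Corollary 10.3 directly from Theorem 9.13 and Lemma 10.2 by a short formal manipulation, with no new calculations required. Let $C = \pi_0^{-(p-1)w(\beta)}\prod_{j=1}^N H(-v_j)/[v'_j]_{pv_j-v'_j}$ denote the scalar appearing on the right-hand side of $(9.14)$, so Theorem 9.13 reads $\alpha^*(G_v(\Lambda,x)) = C\cdot G_{v'}(\Lambda,x)$.

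First I would observe that the operator $\theta(\Lambda,x)\circ\Phi^*$ carries $\Lambda^vx^\beta C_E\{\Lambda_1x^{{\bf a}_1},\dots,\Lambda_Nx^{{\bf a}_N}\}$ into $\Lambda^{v'}x^{\beta'}C_{E_+}\{\Lambda_1x^{{\bf a}_1},\dots,\Lambda_Nx^{{\bf a}_N}\}$, since $\Phi^*$ only multiplies exponents by $p$ (preserving $E\subseteq E_+$) and multiplication by $\theta(\Lambda,x)$ can only increase exponents in each $\Lambda_jx^{{\bf a}_j}$, keeping us inside $E_+$. Therefore, with the hypothesis that $\beta'$ satisfies $(10.1)$, Lemma 10.2 applies to the element $\theta(\Lambda,x)\circ\Phi^*(G_v(\Lambda,x))\in\Lambda^{v'}x^{\beta'}C_{E_+}\{\Lambda_1x^{{\bf a}_1},\dots,\Lambda_Nx^{{\bf a}_N}\}$ and gives $\gamma_{{\mathcal M}'} = \gamma_{{\mathcal M}'}\circ\gamma_E$ on this element.

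Using the factorization $(9.12)$ of $\alpha^*$ and the definition of $\alpha^*_{{\mathcal M}'}$, the computation is then
\begin{align*}
\alpha^*_{{\mathcal M}'}(G_v(\Lambda,x)) &= \gamma_{{\mathcal M}'}\bigl(\theta(\Lambda,x)\circ\Phi^*(G_v(\Lambda,x))\bigr) \\
 &= \gamma_{{\mathcal M}'}\bigl(\gamma_E\circ\theta(\Lambda,x)\circ\Phi^*(G_v(\Lambda,x))\bigr) \\
 &= \gamma_{{\mathcal M}'}\bigl(\alpha^*(G_v(\Lambda,x))\bigr) \\
 &= \gamma_{{\mathcal M}'}\bigl(C\cdot G_{v'}(\Lambda,x)\bigr) \\
 &= C\cdot G_{v'}^{{\mathcal M}'}(\Lambda,x),
\end{align*}
where the second equality is Lemma 10.2, the third is the definition of $\alpha^*$, the fourth is Theorem 9.13, and the last uses ${\mathbb C}_p$-linearity of $\gamma_{{\mathcal M}'}$ together with the definition $G_{v'}^{{\mathcal M}'}(\Lambda,x) = \gamma_{{\mathcal M}'}(G_{v'}(\Lambda,x))$.

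There is no real obstacle here: the only point to check carefully is that the intermediate element lies in the $C_{E_+}$ space to which Lemma 10.2 applies, which I indicated above. The whole corollary is essentially the observation that truncating to ${\mathcal M}'$ \emph{after} applying $\theta(\Lambda,x)\circ\Phi^*$ is the same, under hypothesis $(10.1)$, as first truncating to $E$ and then to ${\mathcal M}'$, which is precisely what Lemma 10.2 encodes.
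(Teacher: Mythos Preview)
Your proof is correct and follows essentially the same approach as the paper: derive $\gamma_{{\mathcal M}'}=\gamma_{{\mathcal M}'}\circ\gamma_E$ from Lemma~10.2 and then apply $\gamma_{{\mathcal M}'}$ to both sides of~(9.14). You are slightly more explicit than the paper in verifying that $\theta(\Lambda,x)\circ\Phi^*(G_v(\Lambda,x))$ lands in the $C_{E_+}$ space on which Lemma~10.2 is stated, but otherwise the arguments are identical.
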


\begin{proof}
It follows from Lemma 10.2 that $\gamma_{{\mathcal M}'} = \gamma_{{\mathcal M}'}\circ\gamma_E$.  The assertion of the corollary then follows from Equation~(9.14) by applying $\gamma_{{\mathcal M}'}$ to both sides. 
\end{proof}

We can now prove the main result of this section.
\begin{theorem}
Suppose that $\beta'$ satisfies $(10.1)$.  Then
\begin{equation}
 \alpha^*_{{\mathcal M}'}\big(G_{v}^{\mathcal M}(\Lambda,x)\big) = \pi_0^{-(p-1)w(\beta)}\bigg(\prod_{j=1}^N \frac{H(-v_j)}{[v_j']_{pv_j-v_j'}}\bigg)G_{v'}^{{\mathcal M}'}(\Lambda,x). 
 \end{equation}
\end{theorem}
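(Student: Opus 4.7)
The plan is to deduce Theorem~10.5 from Corollary~10.4 by showing that $\alpha^*_{{\mathcal M}'}\bigl(G_v(\Lambda,x) - G_v^{\mathcal M}(\Lambda,x)\bigr) = 0$. Writing $G_v - G_v^{\mathcal M} = \sum_{u \in (\beta+{\mathbb Z}A) \setminus {\mathcal M}} G_u(\Lambda) \pi_0^{w(u)} x^u$, the continuity of $\Phi^*$, of multiplication by $\Theta(\Lambda,x)$ (Proposition~4.4), and of $\gamma_{{\mathcal M}'}$ lets me argue monomial by monomial: it suffices to show that every monomial produced from $G_v - G_v^{\mathcal M}$ by $\Theta(\Lambda,x)\circ\Phi^*$ has its $x$-exponent outside ${\mathcal M}'$, so that $\gamma_{{\mathcal M}'}$ annihilates it.

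Concretely, a typical monomial in $G_v - G_v^{\mathcal M}$ has the form $[v]_l \Lambda^{v+l} x^u$ with $l \in E$ and $u = \sum_j (v_j+l_j){\bf a}_j \notin {\mathcal M}$. Applying $\Phi^*$ yields $\Lambda^{p(v+l)} x^{pu}$, and multiplying by $\Theta(\Lambda,x) = \prod_j \sum_{i_j \geq 0} \theta_{i_j}(\Lambda_j x^{{\bf a}_j})^{i_j}$ produces a convergent sum of monomials
\[ \Lambda^{p(v+l)+i}\, x^{u^*}, \qquad u^* := pu + \sum_{j=1}^N i_j\, {\bf a}_j, \]
indexed by $i=(i_1,\dots,i_N) \in {\mathbb N}^N$. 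Thus the theorem reduces to the geometric claim: \emph{if $u \in (\beta+{\mathbb Z}A) \setminus {\mathcal M}$ and $i \in {\mathbb N}^N$, then $u^* \notin {\mathcal M}'$.} This claim is the crux of the proof.

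To establish it, first observe $u^* \in p\beta + {\mathbb Z}A = \beta' + {\mathbb Z}A$, because $p\beta - \beta' = \sum_j (pv_j - v'_j){\bf a}_j$ has integer coefficients and so lies in ${\mathbb Z}A$; thus only the condition $u^* \in -C(A)^\circ$ is at issue. Since $u \in {\mathbb R}A$ and $u \notin -C(A)^\circ$, a standard supporting hyperplane argument (using that $-C(A)$ spans ${\mathbb R}A$) produces a linear functional $h$ on ${\mathbb R}A$ with $h \geq 0$ on $-C(A)$, $h \not\equiv 0$ on $-C(A)$, and $h(u) \leq 0$. Because $-{\bf a}_j \in -C(A)$ we have $h({\bf a}_j) \leq 0$ for every $j$, so
\[ h(u^*) = p\, h(u) + \sum_{j=1}^N i_j\, h({\bf a}_j) \leq 0. \]
But any such $h$ vanishes only on a proper face of $-C(A)$ and is strictly positive on $-C(A)^\circ$; hence $u^* \notin -C(A)^\circ$, giving $u^* \notin {\mathcal M}'$ and completing the proof.
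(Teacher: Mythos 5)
Your proof is correct and follows the same overall route as the paper: both deduce the theorem from Corollary~10.4 by showing that $\alpha^*_{{\mathcal M}'}$ annihilates every monomial $\Lambda^{v+l}x^u$ of $G_v(\Lambda,x)$ with $u=\sum_j(v_j+l_j){\bf a}_j\not\in{\mathcal M}$, and both reduce that to the claim that $pu+\nu\not\in{\mathcal M}'$ for every $\nu=\sum_j m_j{\bf a}_j$ with $m\in{\mathbb N}^N$. The only divergence is in how that claim is established. The paper argues via the face $\sigma$: after disposing (via Lemma~2.8) of indices with $-{\bf a}_j\not\in\sigma$, it notes $-\nu\in\sigma$, so $pu+\nu\in\sigma^\circ$ would force $pu=(pu+\nu)+(-\nu)\in\sigma^\circ$, hence $u\in{\mathcal M}$, a contradiction. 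You instead produce a linear functional $h$ on ${\mathbb R}A$, nonnegative and not identically zero on $-C(A)$, with $h(u)\leq 0$, and conclude $h(pu+\nu)\leq 0$ while $h>0$ on the relative interior; this is a valid argument (the two cases $u\not\in -C(A)$ and $u$ on the relative boundary both yield such an $h$, and $h\not\equiv 0$ on $-C(A)$ forces $h>0$ on $-C(A)^\circ$). Your version is slightly more self-contained, avoiding Lemma~2.8 and the $\sigma$ bookkeeping (which is in any case vacuous after the reduction of Section~3 to $\sigma=-C(A)$), while the paper's keeps the argument uniform with the face formalism of Section~2. The remaining ingredients --- $pu+\nu\in\beta'+{\mathbb Z}A$ because $p\beta-\beta'\in{\mathbb Z}A$, and the termwise application of $\gamma_{{\mathcal M}'}\circ\theta(\Lambda,x)\circ\Phi^*$ --- match the paper exactly.
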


\begin{proof}
From (9.11) we have
\begin{equation}
\theta(\Lambda,x) = \sum_{\nu\in{\mathbb N}A}\theta_\nu(\Lambda)x^\nu
\end{equation}
with 
\begin{equation}
\theta_\nu(\Lambda) = \sum_{\substack{m\in{\mathbb N}^N\\ \sum_{j=1}^N m_j{\bf a}_j = \nu}}\theta_m\Lambda^m.
\end{equation}

The right-hand sides of (10.4) and (10.6) are identical, so to prove (10.6) we need to show the left-hand side of (10.4) equals the left-hand side of~(10.6).  Since $G_v^{{\mathcal M}}(\Lambda,x) = \gamma_{\mathcal M}(G_v(\Lambda,x)$ it suffices to show that $\alpha^*_{{\mathcal M}'}$ annihilates all monomials of the form $\Lambda^{v+l}x^u$ with $\sum_{j=1}^N (v_j+l_j){\bf a}_j = u\not\in{\mathcal M}$.  

Let $\Lambda^{v+l}x^u$ be such a monomial.  Then $\Phi^*(\Lambda^{v+l}x^u) = \Lambda^{p(v+l)}x^{pu}$, and since $u\not\in{\mathcal M}$ we have $pu\not\in{\mathcal M}'$.  It follows from (10.7) and (10.8) that the monomials in $\Theta(\Lambda,x)\Phi^*(\Lambda^{v+l}x^u)$ are of the form $\Lambda^{p(v+l) + m}x^{pu+\nu}$ where
$m\in{\mathbb N}^N$ and $\sum_{j=1}^N m_j{\bf a}_j = \nu$.  We thus have
\begin{equation}
 \sum_{j=1}^N (p(v_j+l_j)+m_j){\bf a}_j = pu+\nu. 
 \end{equation}
We need to show that $pu+\nu\not\in{\mathcal M}'$.

Note that $pl+m\in E_+$.  If $pl_j+m_j\neq 0$ for some $j$ with $-{\bf a}_j\not\in\sigma$, then Lemma~2.8 tells us that $p(v+l)+m\not\in{\mathcal M}'$ and we are done.  So suppose that $pl_j+m_j=0$ for all $j$ for which $-{\bf a}_j\not\in\sigma$.  We noted earlier that $-{\bf a}_j\not\in\sigma$ implies that $v_j=0$, hence $l_j\geq 0$ because $l\in E$.  But $m_j\geq 0$ for all $j$, so if $pl_j + m_j=0$ then $l_j=m_j=0$.  

It follows that if some $m_j>0$ in (10.9), then $-{\bf a}_j\in\sigma$, so 
\[ -\nu=-\sum_{j=1}^N m_j{\bf a}_j\in\sigma. \]
If $pu+\nu$ were in ${\mathcal M}'$, then adding this expression to (10.9) would give
\[ \sum_{j=1}^N p(v_j+l_j){\bf a}_j = pu\in{\mathcal M'}. \]
But this contradicts the assumption that $u\not\in{\mathcal M}$.
\end{proof}

\section{Proof of Theorem 2.21}

We suppose that our prime $p$ satisfies $p\equiv h\pmod{D}$ and that (2.20) holds for $i=0,\dots,a-1$.  
By Proposition 3.19 it suffices to prove that all the corresponding series $G_{u^{(i)}}(\Lambda)$ have $p$-integral coefficients.  By our assumption (2.20), Theorem 10.5 holds for all $\beta^{(i)}$ so we have
\begin{multline}
 \alpha^*_{{\mathcal M}^{(i+1)}}\big(G_{v^{(i)}}^{{\mathcal M}^{(i)}}(\Lambda,x)\big) \\
 = \pi_0^{-(p-1)w(\beta^{(i)})}\bigg(\prod_{j=1}^N \frac{H(-v_j^{(i)})}{[v_j^{(i+1)}]_{pv_j^{(i)}-v_j^{(i+1)}}}\bigg)G_{v^{(i+1)}}^{{\mathcal M}^{(i+1)}}(\Lambda,x). 
 \end{multline}
We begin by computing the left-hand side of (11.1) directly from the definition.  We have
\[ \Phi^*(G_{v^{(i)}}^{{\mathcal M}^{(i)}}(\Lambda,x)) = \sum_{u^{(i)}\in{\mathcal M}^{(i)}} G_{u^{(i)}}(\Lambda^p)\pi_0^{w(u^{(i)})}x^{pu^{(i)}}. \]
Thus from (10.7)
\[ \theta(\Lambda,x)\Phi^*(G_{v^{(i)}}^{{\mathcal M}^{(i)}}(\Lambda,x)) = \sum_{\rho\in E_+} \bigg( \sum_{\substack{\nu\in{\mathbb N}A,\,u^{(i)}\in{\mathcal M}^{(i)}\\ \nu+ pu^{(i)} = \rho}}\theta_\nu(\Lambda)G_{u^{(i)}}(\Lambda^p)\pi_0^{w(u^{(i)})}\bigg) x^\rho. \]
Finally, applying $\gamma_{{\mathcal M}^{(i+1)}}$, we get
\begin{multline}
\alpha_{{\mathcal M}^{(i+1)}}(G_{v^{(i)}}^{{\mathcal M}^{(i)}}(\Lambda,x)) \\
= \sum_{u^{(i+1)}\in{\mathcal M}^{(i+1)}}\bigg(\sum_{\substack{\nu\in{\mathbb N}A,\,u^{(i)}\in{\mathcal M}^{(i)}\\ \nu+ pu^{(i)} = u^{(i+1)}}}\theta_\nu(\Lambda)G_{u^{(i)}}(\Lambda^p)\pi_0^{w(u^{(i)}-u^{(i+1)})}\bigg) \pi_0^{w(u^{(i+1)})}x^{u^{(i+1)}}.
\end{multline}

Using the definition of $G_{v^{(i+1)}}^{{\mathcal M}^{(i+1)}}(\Lambda,x)$ the right-hand side of (11.1) equals
\begin{equation} 
\pi_0^{-(p-1)w(\beta^{(i)})}\bigg(\prod_{j=1}^N \frac{H(-v_j^{(i)})}{[v_j^{(i+1)}]_{pv_j^{(i)}-v_j^{(i+1)}}}\bigg)\sum_{u^{(i+1)}\in{\mathcal M}^{(i+1)}} G_{u^{(i+1)}}(\Lambda) \pi_0^{w(u^{(i+1)})}x^{u^{(i+1)}}.
\end{equation}
 By (11.1), the coefficients of $\pi_0^{w(u^{(i+1)})}x^{u^{(i+1)}}$ on the right-hand sides of (11.2) and in (11.3) must be equal for all $u^{(i+1)}\in{\mathcal M}^{(i+1)}$:
\begin{multline}
\pi_0^{-(p-1)w(\beta^{(i)})}\bigg(\prod_{j=1}^N \frac{H(-v_j^{(i)})}{[v_j^{(i+1)}]_{pv_j^{(i)}-v_j^{(i+1)}}}\bigg)G_{u^{(i+1)}}(\Lambda)\\
=\sum_{\substack{\nu\in{\mathbb N}A,\,u^{(i)}\in{\mathcal M}^{(i)}\\ \nu+ pu^{(i)} = u^{(i+1)}}}\theta_\nu(\Lambda)G_{u^{(i)}}(\Lambda^p)\pi_0^{w(u^{(i)}-u^{(i+1)})}.
\end{multline}
Solve this equation for $G_{u^{(i+1)}}(\Lambda)$:
\begin{multline}
G_{u^{(i+1)}}(\Lambda) \\
=\pi_0^{(p-1)w(\beta^{(i)})}\bigg(\prod_{j=1}^N \frac{H(-v_j^{(i)})}{[v_j^{(i+1)}]_{pv_j^{(i)}-v_j^{(i+1)}}}\bigg)^{-1}
\sum_{\substack{\nu\in{\mathbb N}A,\,u^{(i)}\in{\mathcal M}^{(i)}\\ \nu+ pu^{(i)} = u^{(i+1)}}}\theta_\nu(\Lambda)G_{u^{(i)}}(\Lambda^p)\pi_0^{w(u^{(i)}-u^{(i+1)})}.
\end{multline}

In Equation (10.8) we have for $m\in{\mathbb N}^N$ with $\sum_{j=1}^N m_j{\bf a}_j = \nu$ that $\theta_m = \prod_{j=1}^N \theta_{m_j}$.  Hence by (3.7)
 \[ {\rm ord}\: \theta_m = \sum_{j=1}^N {\rm ord}\: \theta_{m_j} \geq \frac{\sum_{j=1}^N m_j}{p-1} = \frac{w(\nu)}{p-1}. \]
We can thus write $\theta_\nu(\Lambda) = \pi_0^{w(\nu)}\tilde{\theta}_\nu(\Lambda)$ where $\tilde{\theta}_\nu(\Lambda)$ has $p$-integral coefficients and (11.5) can be rewritten as
\begin{multline}
G_{u^{(i+1)}}(\Lambda) = \pi_0^{(p-1)w(\beta^{(i)})}\bigg(\prod_{j=1}^N \frac{H(-v_j^{(i)})}{[v_j^{(i+1)}]_{pv_j^{(i)}-v_j^{(i+1)}}}\bigg)^{-1}\\
\cdot\sum_{\substack{\nu\in{\mathbb N}A,\,u^{(i)}\in{\mathcal M}^{(i)}\\ \nu+ pu^{(i)} = u^{(i+1)}}}\tilde{\theta}_\nu(\Lambda)G_{u^{(i)}}(\Lambda^p)\pi_0^{w(\nu+u^{(i)}-u^{(i+1)})}.
\end{multline}
Since $\nu+pu^{(i)}=u^{(i+1)}$ we have $\nu+u^{(i)}-u^{(i+1)} = -(p-1)u^{(i)}$ so (11.6) becomes
\begin{multline}
G_{u^{(i+1)}}(\Lambda) \\
= \bigg(\prod_{j=1}^N \frac{H(-v_j^{(i)})}{[v_j^{(i+1)}]_{pv_j^{(i)}-v_j^{(i+1)}}}\bigg)^{-1}
\sum_{\substack{\nu\in{\mathbb N}A,\,u^{(i)}\in{\mathcal M}^{(i)}\\ \nu+ pu^{(i)}= u^{(i+1)}}}\pi_0^{(p-1)w(\beta^{(i)}-u^{(i)})}\tilde{\theta}_\nu(\Lambda)G_{u^{(i)}}(\Lambda^p).
\end{multline}
In this equation, $\big(\prod_{j=1}^N \frac{H(-v_j^{(i)})}{[v_j^{(i+1)}]_{pv_j^{(i)}-v_j^{(i+1)}}}\big)^{-1}$ is a $p$-adic unit and all coefficients of the polynomial $\pi_0^{(p-1)w(\beta^{(i)}-u^{(i)})}\tilde{\theta}_\nu(\Lambda)$ are $p$-integral by (2.20)

We now apply Equation (11.7) to deduce $p$-integrality of the series coefficients of $G_{u^{(i)}}(\Lambda)$.  Recall the series expansion (see Equation~(3.17))
\begin{equation}
G_{u^{(i)}}(\Lambda)=  \sum_{l=(l_1,\dots,l_N)\in E(u^{(i)})} \bigg(\prod_{j=1}^N g(v^{(i)}_j,l_j)\bigg) \Lambda^{v^{(i)}+l},
\end{equation}
where the $g(v_j^{(i)},l_j)$ satisfy 
\begin{equation}
{\rm ord}\:g(v_j^{(i)},l_j) = {\rm ord}\:[v^{(i)}_j]_{l_j}
\end{equation}
 by (3.13) and (3.16).

Since $l\in E(u^{(i)})$, if $v^{(i)}_j=-1$ then $l_j\in{\mathbb Z}_{\leq 0}$ so $\prod_{v^{(i)}_j=-1} [v^{(i)}_j]_{l_j}$ is $p$-integral.   We can thus focus attention on the $[v^{(i)}_j]_{l_j}$ for $-1<v^{(i)}_j\leq 0$.  
We prove that all monomials $\Lambda^{v^{(i)}+l}$ in all $G_{u^{(i)}}(\Lambda)$, $u^{(i)}\in{\mathcal M}^{(i)}$, and $i=0,1,\dots,a-1$, have $p$-integral coefficients by induction on $d(v^{(i)}+l):=\sum_{-1<v^{(i)}_j\leq 0} \max\{0,v_j^{(i)}+l_j\}$.  If $d(v^{(i)}+l)=0$, then $l_j\leq 0$ for $-1<v^{(i)}_j\leq 0$.  In this case $[v_j^{(i)}]_{l_j}$ is $p$-integral from its definition, and by (11.9) the coefficient of $\Lambda^{v^{(i)}+l}$ in $G_{u^{(i)}}(\Lambda)$ is a unit times  $\prod_{j=1}^N[v_j^{(i)}]_{l_j}$, hence is $p$-integral.  This holds for $i=0,1,\dots,a-1$.   

Fix $i$ and consider a monomial $\Lambda^{v^{(i+1)}+l'}$ in $G_{u^{(i+1)}}(\Lambda)$ with $d(v^{(i+1)}+l')>0$.  By induction we may assume that all monomials $\Lambda^{v^{(k)}+l}$ in all $G_{u^{(k)}}(\Lambda)$, $u^{(k)}\in{\mathcal M}^{(k)}$, $k=0,1\dots,a-1$, with $d(v^{(k)}+l)<d(v^{(i+1)}+l')$ have $p$-integral coefficients.  From the right-hand side of (11.7) one gets a formula for the coefficient of $\Lambda^{v^{(i+1)}+l'}$ in~$G_{u^{(i+1)}}(\Lambda)$.  But only monomials $\Lambda^{v^{(i)}+l}$ of $G_{u^{(i)}}(\Lambda)$ with 
\[ d(v^{(i)}+l)\leq d(v^{(i+1)}+l')/p \]
 can contribute to this formula.  All these monomials have $p$-integral coefficients by the induction hypothesis, so (11.7) implies that the coefficient of $\Lambda^{v^{(i+1)}+l'}$ in $G_{u^{(i+1)}}(\Lambda)$ is $p$-integral.  Since $i\in\{0,1,\dots,a-1\}$, $u^{(i+1)}\in{\mathcal M}^{(i+1)}$, and $l'\in E(u^{(i+1)})$ were arbitrary, the proof of Theorem 2.21 is complete by induction.

\section{Examples}

Let $c_{jk}\in{\mathbb Z}$ for $1\leq j\leq n$ and $1\leq k\leq m$ and let
\[ C_j(s):=C_j(s_1,\dots,s_m)=\sum_{k=1}^m c_{jk}s_k. \]
To avoid trivial cases, we assume that no $C_j$ is identically zero.  We also assume that for each $k$ there are at least two values of $j$ such that $c_{jk}\neq 0$, i.~e., each $s_k$ appears in at least two $C_j$ with nonzero coefficient.  We also assume that 
\begin{equation}
\sum_{j=1}^n c_{jk}=1\quad\text{for $k=1,\dots,m$.}
\end{equation}
This condition will guarantee that the associated $A$-hypergeometric system is regular holonomic.

Many classical hypergeometric equations have a solution of the form (Dwork-Loeser \cite[Appendix]{DL})
\begin{equation}
\sum_{s_1,\dots,s_m=0}^\infty (\theta_1)_{C_1(s)}\cdots(\theta_n)_{C_n(s)}\frac{t_1^{s_1}\cdots t_m^{s_m}}{s_1!\cdots s_m!}.
\end{equation}
In this section we give a $p$-integrality criterion for such series when all $\theta_i$ are $p$-integral rational numbers in the interval $[0,1]$.  

We impose two additional conditions.
If $\theta_i = 1$, then $(\theta_i)_{C_i(s)}$ is undefined for $C_i(s)<0$, so we assume that all $c_{ik}$ are nonnegative when $\theta_i=1$.  If $\theta_i=0$, then $(\theta_i)_{C_i(s)} = 0$ for $C_i(s)>0$, so we assume that all $c_{ik}$ are nonpositive when $\theta_i=0$.  Under these assumptions, the series (12.2) can be written as 
\begin{equation}
\sum_{s_1,\dots,s_m = 0}^\infty \frac{\prod_{\theta_i=1} C_i(s)!}{\prod_{\theta_i=0}(-1)^{C_i(s)}(-C_i(s))!} \prod_{0<\theta_i<1} (\theta_i)_{C_i(s)} \frac{t_1^{s_1}\cdots t_m^{s_m}}{s_1!\cdots s_m!}.
\end{equation}

For the study of such series, the assumption above that each $s_k$ appears in at least two $C_i(s)$ is not restrictive.  If, for example, $s_1$ appears in only one $C_i(s)$ we can multiply the series (12.3) by $\frac{s_1!}{s_1!}$ and the assumption will be satisfied.  For instance, this would replace the series ${}_1F_0(\theta_1;t_1)$ by the series ${}_2F_1(\theta_1,1;1;t_1)$, which has identical coefficients,

We describe the $A$-hypergeometric system for which a solution $F_\beta(\Lambda)$ has coefficients identical (up to sign) to those of (12.2).
Let ${\bf a}_1,\dots,{\bf a}_n$ be the standard unit basis vectors in ${\mathbb R}^n$ and for $k=1,\dots,m$ let
\[ {\bf a}_{n+k} = (c_{1k},\dots,c_{nk}). \]
Our hypothesis that for a fixed $k$ at least two $c_{jk}$ are nonzero implies that ${\bf a}_1,\dots,{\bf a}_{n+m}$ are all distinct.  Put $N=n+m$ and let $A = \{{\bf a}_i\}_{i=1}^{N}\subseteq {\mathbb Z}^n$.   
Condition (12.1) implies that the elements of the set $A$ all lie on the hyperplane $\sum_{i=1}^n u_i =1$ in ${\mathbb R}^n$.

Let $\Theta = (\theta_1,\dots,\theta_n)$ be a sequence of $p$-integral rational numbers in the interval~$[0,1]$.  Take
\[ v = (-\theta_1,\dots,-\theta_n,0,\dots,0) \]
where $0$ is repeated $m$ times.  This is a sequence of $p$-integral rational numbers in the interval $[-1,0]$, and we take
\[ \beta = \sum_{i=1}^N v_i{\bf a}_i = (-\theta_1,\dots,-\theta_n). \]
We show that $\beta$ lies in $-C(A)^\circ$.  

Put $c_i = \sum_{k=1}^m c_{ik}$.  We begin with the relation
\begin{equation}
\sum_{i=1}^n c_i{\bf a}_i = \sum_{k=1}^m {\bf a}_{n+k}.
\end{equation}
We rewrite this as
\begin{equation}
\sum_{i:c_i>0} c_i{\bf a}_i = \sum_{i:c_i<0} (-c_i){\bf a}_i + \sum_{k=1}^m {\bf a}_{n+k}.
\end{equation}
Let $\sigma$ be the smallest closed face of $C(A)$ containing $A_1:=\{ {\bf a}_i\mid c_i>0\}$.  
All coefficients on both sides of (12.5) are positive, so the face $\sigma$ also contains the set $A_2:=\{ {\bf a}_i\mid c_i<0\}$ and the set $A_3:=\{{\bf a}_{n+k}\mid k=1,\dots,m\}$.  Our hypothesis implies that if $\theta_i=0$ then $c_i<0$, so all ${\bf a}_i$ with $\theta_i=0$ lie in $A_2$ and hence lie on $\sigma$.  The only elements of $A$ that may not lie on $\sigma$ are those in the set
$A_4:=\{{\bf a}_i\mid \theta_i>0 \text{ and }c_i=0\}$, so $\sigma\cup A_4$ contains the set $A$.  This implies that
\[ \sum_{{\bf a}_i\in A_1}{\bf a}_i + \sum_{{\bf a}_i\in A_4}{\bf a}_i \in C(A)^\circ. \]
Since $A_1$ and $A_4$ are subsets of $\{{\bf a}_i\mid \theta_i>0\}$, it follows that $\sum_{\theta_i>0} \theta_i{\bf a}_i\in C(A)^\circ$.  Equivalently, $\beta$ is an interior point of $-C(A)$.

From the definition of the set $E$ we have
\begin{multline}
E = \{ l=(l_1,\dots,l_N)\in{\mathbb Z}^N\mid \text{$l_j\leq 0$ if $\theta_j=1$,} \\
\text{$l_j\geq 0$ if $\theta_j=0$ or if $j=n+1\dots,n+m$}\}.
\end{multline}
For $u\in\beta + {\mathbb Z}A$ we have
\begin{equation}
 E(u) = \bigg\{ l=(l_1,\dots,l_N)\in E\mid \sum_{j=1}^N (v_j+l_j){\bf a}_j = u\bigg\} 
 \end{equation}
and
\begin{equation}
 F_u(\Lambda_1,\dots,\Lambda_N) = \sum_{l\in E(u)}[v]_l\Lambda^{v+l}. 
 \end{equation}

We take $u=\beta$ in these formulas to verify that the series $F_\beta(\Lambda)$ has the same coefficients (up to sign) as the series (12.2).  From (12.7) we have
\[ E(\beta) = \biggl\{ l=(l_1,\dots,l_N)\in E\mid \sum_{j=1}^N l_j{\bf a}_j = {\bf 0}\bigg\}. \]
We need to solve 
\begin{equation}
{\bf 0} =  \sum_{j=1}^N l_j{\bf a}_j = (l_1+C_1(l_{n+1},\dots,l_{n+m}),\dots,l_n+C_n(l_{n+1},\dots,l_{n+m}))
\end{equation}
with $l\in E$.  From (12.9) we must have for $i=1,\dots,n$
\begin{equation}
l_i=-C_i(l_{n+1},\dots,l_{n+m}).
\end{equation}
From (12.6) we must have $l_{n+k}\geq 0$ for $k=1,\dots,m$.  If $\theta_i=1$, then $C_i$ has nonnegative coefficients, so the $l_i$ given by (12.10) is nonpositive.  If $\theta_i=0$, then $C_i$ has nonpositive coefficients, so the $l_i$ given by (12.10) is nonnegative.  It follows that
\begin{equation}
E(\beta) = \bigg\{ (-C_1(s_1,\dots,s_m),\dots,-C_n(s_1,\dots,s_m),s_1,\dots,s_m)\mid s_1,\dots,s_m\in{\mathbb Z}_{\geq 0}\bigg\}.
\end{equation}

Equation (12.8) with $u=\beta$ gives
\begin{align}
F_\beta(\Lambda) &= \sum_{s_1,\dots,s_m=0}^\infty [v]_s\Lambda^{\beta+s} \\ \nonumber
 & = \sum_{s_1,\dots,s_m=0}^\infty \prod_{i=1}^n [-\theta_i]_{-C_i(s_1,\dots,s_m)}\prod_{k=1}^m [0]_{s_k}\Lambda^{\beta+s}\\ \nonumber
  & =  \Lambda^\beta\sum_{s_1,\dots,s_m=0}^\infty \prod_{i=1}^n (-1)^{C_i(s)}(\theta_i)_{C_i(s_1,\dots,s_m)} \prod_{i=1}^n \Lambda_i^{-C_i(s_1,\dots,s_m)} \frac{\prod_{k=1}^m \Lambda_{n+k}^{s_k}}{s_1!\cdots s_m!},
 \end{align}
 where the last equality follows from (1.3).  Thus the coefficients of $F_\beta(\Lambda)$ equal (up to sign) the corresponding coefficients of (12.2).

We can thus apply Theorem 2.21 to this $A$-hypergeometric system to get an integrality condition for the classical hypergeometric series (12.2) and all the related series $F_u(\Lambda)$ with $u\in{\mathcal M}$.  
Fix a positive integer $D$ such that $D\theta_j\in{\mathbb Z}$ for all $j$ and fix a positive integer $h$ with $(h,D) = 1$.  
In Section 2, we used $h$ to define a map $v_j\to v_j'$ for $-1\leq v_j\leq 0$.  We denoted the $i$-fold iteration of this map by $v_j\to v_j^{(i)}$, and, if $h^a\equiv 1\pmod{D}$, then $v_j^{(a)} = v_j$.  We define $\theta_j^{(i)}$ by the formula
\[  \theta_j^{(i)} = -(-\theta_j)^{(i)}. \]
The $\theta_j^{(i)}$ lie in the interval $[0,1]$.  We set $\Theta^{(i)} = (\theta_1^{(i)},\dots,\theta_n^{(i)})$ and
\[ v^{(i)} = (-\theta_1^{(i)},\dots,-\theta_n^{(i)},0,\dots,0), \]
where $0$ is repeated $m$ times.

We have $\beta^{(i)} = \sum_{j=1}^N v_j^{(i)}{\bf a}_j$ and ${\mathcal M}^{(i)} = (\beta^{(i)}+{\mathbb Z}A)\cap(-C(A)^\circ)$.   For $u^{(i)}\in{\mathcal M}^{(i)}$ we have
\begin{equation}
 F_{u^{(i)}}(\Lambda_1,\dots,\Lambda_N) = \sum_{l\in E(u^{(i)})}[v^{(i)}]_l\Lambda^{v^{(i)}+l}. 
 \end{equation}
 Note that
 \begin{equation}
 w(\beta^{(i)}) = -\sum_{j=1}^n \theta^{(i)}_j.
 \end{equation}
 
Define a step function on ${\mathbb R}^m$ 
\[ \rho(\Theta;x_1,\dots,x_m) = \sum_{j=1}^n \lfloor 1-\theta_j + C_j(x_1,\dots,x_m)\rfloor. \]
\begin{theorem}
Suppose that for each $i=0,1,\dots,a-1$ we have
\[ \rho(\Theta^{(i)};x_1,\dots,x_m)\geq 0 \]
for all $x_1,\dots,x_m\in[0,1)$.  Then for $i=0,1,\dots,a-1$ and all $u^{(i)}\in{\mathcal M}^{(i)}$ the series $F_{u^{(i)}}(\Lambda)$ has $p$-integral coefficients for all primes $p\equiv h\pmod{D}$.
\end{theorem}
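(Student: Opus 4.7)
The plan is to derive Theorem 12.15 from Theorem 2.21 by showing that the step-function hypothesis $\rho(\Theta^{(i)};\cdot)\ge 0$ on $[0,1)^m$ forces the extremality condition (2.20) on $\beta^{(i)}$ for each $i=0,1,\dots,a-1$. Once (2.20) is verified for all relevant $i$, Theorem 2.21 produces the $p$-integrality of the series $F_{u^{(i)}}(\Lambda)$ in question.

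To check (2.20) for a fixed $i$, I parametrize $\mathcal{M}^{(i)}$ using the fact that $\mathbf{a}_1,\dots,\mathbf{a}_n$ is the standard basis. Any $u^{(i)}\in\beta^{(i)}+\mathbb{Z}A$ has coordinates $u_j^{(i)}=-\theta_j^{(i)}+K_j$ with $K_j\in\mathbb{Z}$, so
\[
w(u^{(i)})-w(\beta^{(i)})=\sum_{j=1}^{n} K_j,
\]
and (2.20) reduces to proving $\sum_{j=1}^n K_j\le 0$ for every $u^{(i)}\in\mathcal{M}^{(i)}$. Since $-C(A)$ is full dimensional and $u^{(i)}\in -C(A)^\circ$, a standard perturbation argument (translate $u^{(i)}$ by a small positive multiple of $-\sum_j\mathbf{a}_j\in -C(A)$, re-expand the result as a nonnegative combination of the $-\mathbf{a}_j$, and add the translate back) produces a representation $u^{(i)}=-\sum_{j=1}^{N} s_j\mathbf{a}_j$ with \emph{strictly} positive coefficients $s_j>0$ for every $j$. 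Setting $x_k:=s_{n+k}$ and reading off the first $n$ coordinates yields
\[
0<s_j=\theta_j^{(i)}-K_j-C_j(x_1,\dots,x_m)\qquad (j=1,\dots,n),
\]
i.e.\ the strict bound $K_j<\theta_j^{(i)}-C_j(x)$ for each $j$.

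Integrality of $K_j$ upgrades this to $K_j+1\le\lceil \theta_j^{(i)}-C_j(x)\rceil$. Using the identity $1-\lceil y\rceil=\lfloor 1-y\rfloor$ (valid for all $y\in\mathbb{R}$) and summing over $j=1,\dots,n$,
\[
-\sum_{j=1}^{n} K_j\;\ge\;\sum_{j=1}^{n}\bigl(1-\lceil \theta_j^{(i)}-C_j(x)\rceil\bigr)=\sum_{j=1}^{n}\lfloor 1-\theta_j^{(i)}+C_j(x)\rfloor=\rho(\Theta^{(i)};x).
\]
To land inside $[0,1)^m$, write $x_k=\tilde x_k+n_k$ with $\tilde x_k\in[0,1)$ and $n_k:=\lfloor x_k\rfloor\ge 0$. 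Each $C_j(n_1,\dots,n_m)$ is an integer that can be extracted from the floor, and the normalization (12.1) gives $\sum_{j=1}^n C_j(n_1,\dots,n_m)=\sum_{k=1}^m n_k$, so
\[
\rho(\Theta^{(i)};x)=\rho(\Theta^{(i)};\tilde x)+\sum_{k=1}^{m} n_k\;\ge\; 0
\]
by the standing hypothesis together with $n_k\ge 0$. Combining the two displays yields $\sum_{j=1}^n K_j\le 0$, which is precisely (2.20).

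The main technical point I expect to require care is the passage from the weak cone condition $s_j\ge 0$ to the strict positivity $s_j>0$ drawn from $u^{(i)}\in -C(A)^\circ$. It is exactly this upgrade that turns the ``floor'' bound on $K_j$ into a ``ceiling'' bound, so that summation produces $\rho(\Theta^{(i)};x)$ itself rather than $\rho$ corrected by a boundary count $\#\{j:\theta_j^{(i)}-C_j(x)\in\mathbb{Z}\}$. In short, the topological interior built into the definition of $\mathcal{M}^{(i)}$ is what synchronizes the combinatorial quantity $\rho$ with the weight-extremality of $\beta^{(i)}$ required by Theorem 2.21.
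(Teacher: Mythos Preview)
Your strategy is the paper's: deduce Theorem 12.15 from Theorem 2.21 by verifying (2.20) for each $i$, via representing $u^{(i)}$ as a nonnegative combination of the $-\mathbf{a}_j$ and reading off a relation between $\sum_j K_j$ and the step function $\rho$. The paper packages this as Lemma 12.16 and proves both directions of the equivalence; you prove only the direction needed, which suffices.

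There is one slip, in exactly the spot you flagged as delicate. Your perturbation is pointed the wrong way: translating $u^{(i)}$ by $-\epsilon\sum_j\mathbf{a}_j$ moves \emph{deeper} into $-C(A)$ (this works for any $u^{(i)}\in -C(A)$, interior or not), and ``adding the translate back'' then yields coefficients $t_j-\epsilon$, which need not be positive. The interior hypothesis is not used. The correct move is the opposite translate: since $u^{(i)}\in(-C(A))^\circ$, the point $u^{(i)}+\epsilon\sum_j\mathbf{a}_j$ still lies in $-C(A)$ for small $\epsilon>0$; writing it as $\sum_j t_j(-\mathbf{a}_j)$ with $t_j\ge 0$ and subtracting gives $u^{(i)}=\sum_j(t_j+\epsilon)(-\mathbf{a}_j)$ with all coefficients strictly positive. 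With this fix your argument goes through verbatim.

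Tactically the two proofs differ. The paper works with a single $\hat u$ of \emph{maximal} weight and uses that maximality (in the spirit of Lemma 2.5) to pin the last $m$ coefficients to $(-1,0]$, so $-z_{n+k}\in[0,1)$ automatically and one obtains the \emph{equality} $\sum_i\tilde u_i=-\rho(\Theta;-z_{n+1},\dots,-z_{n+m})$; this is what yields both directions of Lemma 12.16. You instead take an arbitrary $u^{(i)}$, allow $x_k=s_{n+k}$ to be any positive real, obtain only the \emph{inequality} $-\sum_j K_j\ge\rho(\Theta^{(i)};x)$, and then reduce $x$ to $[0,1)^m$ using the normalization (12.1). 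Your route is a bit more elementary (no interval-pinning via maximality), at the cost of an extra reduction step and the loss of the converse.
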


{\bf Remark.}  The special case of Theorem 12.15 where each $C_i$ has either all coefficients nonnegative or all coefficients nonpositive and where $u^{(i)} = \beta^{(i)}$ is \cite[Theorem 5.6]{AS}.  Theorem 12.15 extends that result to all $u^{(i)}$ in all ${\mathcal M}^{(i)}$ and allows each $C_i$, when $\theta_i\neq 0,1$, to have both positive and negative coefficients.  The special case of Theorem 12.15 where each $\theta_i$ equals either $0$ or $1$ and where each $C_i$ has either all coefficients nonnegative or all coefficients nonpositive is \cite[Theorem~2.6]{AS3}.  

Theorem 12.15 is an immediate consequence of Theorem 2.21 and the following result.
\begin{lemma}
Let $i\in\{0,1,\dots,a-1\}$.  Equation $(2.20)$ holds if and only if 
\[ \rho(\Theta^{(i)};x_1,\dots,x_m)\geq 0 \]
for all $x_1,\dots,x_m\in[0,1)$. 
\end{lemma}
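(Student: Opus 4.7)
The plan is to translate condition $(2.20)$ into a floor-function inequality matching the definition of $\rho(\Theta^{(i)};\cdot)$, then reduce the resulting condition on ${\mathbb R}_{>0}^m$ to the stated condition on $[0,1)^m$. First I parameterize ${\mathcal M}^{(i)}$: because ${\bf a}_1,\ldots,{\bf a}_n$ are the standard basis of ${\mathbb R}^n$, one has ${\mathbb Z}A={\mathbb Z}^n$, so every element of $\beta^{(i)}+{\mathbb Z}A$ has the unique form $u=-\Theta^{(i)}+l$ for some $l\in{\mathbb Z}^n$, and a direct computation using $(12.1)$ (which makes $w$ equal to the coordinate sum on ${\mathbb R}^n$) gives $w(u)-w(\beta^{(i)})=\sum_{j=1}^n l_j$. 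Thus $(2.20)$ is equivalent to: $\sum_{j=1}^n l_j\leq 0$ for every $l\in{\mathbb Z}^n$ with $\Theta^{(i)}-l\in C(A)^\circ$. I then invoke the standard fact that the interior of a finitely generated convex cone equals the set of points admitting a representation in which every generator appears with strictly positive coefficient. Applied to $C(A)={\rm cone}({\bf a}_1,\dots,{\bf a}_{n+m})$, this says $\Theta^{(i)}-l\in C(A)^\circ$ if and only if there exist $y\in{\mathbb R}_{>0}^m$ and $\alpha\in{\mathbb R}_{>0}^n$ with $\theta_j^{(i)}-l_j=\alpha_j+C_j(y)$ for each $j$, equivalently $y>0$ and $l_j<\theta_j^{(i)}-C_j(y)$ for all $j$.

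For each fixed $y\in{\mathbb R}_{>0}^m$, the supremum of $\sum_{j=1}^n l_j$ over integers $l$ satisfying those strict inequalities equals $\sum_{j=1}^n(\lceil\theta_j^{(i)}-C_j(y)\rceil-1)$, which via $\lceil r\rceil=-\lfloor -r\rfloor$ and $\lfloor r+1\rfloor=\lfloor r\rfloor+1$ rearranges to $-\rho(\Theta^{(i)};y)$. Hence $(2.20)$ is equivalent to the condition that $\rho(\Theta^{(i)};y)\geq 0$ for all $y\in{\mathbb R}_{>0}^m$.

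To finish I reduce to $[0,1)^m$. Condition $(12.1)$ gives the quasi-periodicity $\rho(\Theta^{(i)};x+n)=\rho(\Theta^{(i)};x)+\sum_k n_k$ for $n\in{\mathbb Z}^m$, so writing an arbitrary $y>0$ as $y=n+x$ with $n\in{\mathbb Z}_{\geq 0}^m$ and $x\in[0,1)^m$ gives the easy direction. For the reverse, given $x\in[0,1)^m$ I set $y=x+\epsilon(1,\ldots,1)$ for small $\epsilon>0$; then $y>0$, and right-continuity of $\lfloor\cdot\rfloor$ (combined with a case analysis on the signs of $\sum_k c_{jk}$) yields $\rho(\Theta^{(i)};y)\leq\rho(\Theta^{(i)};x)$, so $\rho(\Theta^{(i)};y)\geq 0$ forces $\rho(\Theta^{(i)};x)\geq 0$. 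The key step is the characterization of $C(A)^\circ$ by strictly positive representations of all generators, which converts the abstract cone condition into the concrete system of linear inequalities used above; the one subtlety requiring care is the final passage from ${\mathbb R}_{>0}^m$ to $[0,1)^m$, where one must verify that a small positive perturbation of $x$ can only decrease each floor summand.
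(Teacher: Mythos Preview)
Your argument is correct and takes a genuinely different route from the paper's proof. The paper fixes a maximizer $\hat u\in{\mathcal M}^{(i)}$, invokes \cite[Lemma~1]{AS2} to write $\hat u=\sum_{i=1}^N z_i{\bf a}_i$ with $z_i\le 0$ (and $z_i<0$ for $i\le n$), then uses maximality to force $z_i\in[-1,0)$ for $i\le n$ and $z_i\in(-1,0]$ for $i>n$; reading off (12.20) coordinatewise yields the identity $\sum_i\tilde u_i=-\rho(\Theta^{(i)};-z_{n+1},\dots,-z_{n+m})$ for this particular $\hat u$, and both directions follow. You instead keep the auxiliary parameters $y\in{\mathbb R}_{>0}^m$ free, use the characterization of $C(A)^\circ$ by strictly positive combinations of all generators to describe \emph{every} $u\in{\mathcal M}^{(i)}$, and for each fixed $y$ maximize $\sum_j l_j$ over the integer box $l_j<\theta_j^{(i)}-C_j(y)$; the optimum is exactly $-\rho(\Theta^{(i)};y)$, so $(2.20)$ becomes $\rho\ge 0$ on all of ${\mathbb R}_{>0}^m$, and your quasi-periodicity plus right-continuity argument then reduces this to $[0,1)^m$. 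Your approach is more direct and avoids singling out a maximizer or pinning down its coefficients; the paper's approach, on the other hand, produces the explicit formula (12.27) for the components $\tilde u_i$ of the maximizer, which is extra structural information not visible in your argument. Both proofs ultimately rest on the same convex-geometric fact about interiors of finitely generated cones (the paper's citation of \cite{AS2} and your ``standard fact'' are essentially the same statement).
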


\begin{proof}
It suffices to prove this in the case $i=0$ as the other cases are similar.   Since $w(\beta) = -\sum_{i=1}^n \theta_i$, We need to show that for all $u\in{\mathcal M}$ we have
\begin{equation}
w(u)\leq -\sum_{i=1}^n \theta_i
\end{equation}
if and only if 
\begin{equation}
\sum_{i=1}^n \lfloor 1-\theta_i+C_i(x_1,\dots,x_m)\rfloor \geq 0
\end{equation}
for all $x_1,\dots,x_m\in[0,1)$.  

Fix $\hat{u}\in{\mathcal M}$ such that $w(\hat{u}) = \max\{w(u)\mid u\in{\mathcal M}\}$ and choose $\tilde{u}=(\tilde{u}_1,\dots,\tilde{u}_n)\in{\mathbb Z}^n$ such that $\hat{u}=\beta + \tilde{u}$.  Since $w(\hat{u}) = w(\beta) + w(\tilde{u}) = -\sum_{i=1}^n \theta_i + \sum_{i=1}^n \tilde{u}_i$, we need to show that 
\begin{equation}
\sum_{i=1}^n \tilde{u}_i\leq 0
\end{equation}
 if and only if (12.18) holds for all $x_1,\dots,x_m\in[0,1)$.  
 
Since $\hat{u}$ is an interior point of $-C(A)$, by \cite[Lemma 1]{AS2} we may write
\begin{equation}
\hat{u} = \sum_{i=1}^N z_i{\bf a}_i
\end{equation}
with $z_i\leq 0$ for all $i$ and $z_i<0$ for $i=1,\dots,n$.  Note that since the coordinates of each ${\bf a}_i$ sum to $1$, we have
\begin{equation}
w(\hat{u}) = \sum_{i=1}^N z_i.
\end{equation}

We must have $z_i\geq -1$ for all $i$.  For if some $z_{i_0}<-1$, then
\begin{equation}
\hat{u}+{\bf a}_{i_0} = (z_{i_0}+1){\bf a}_{i_0} + \sum_{\substack{i=1\\ i\neq i_0}}^N z_i{\bf a}_i
\end{equation}
is an element of ${\mathcal M}$ since all coefficients on the right-hand side are less than or equal to $0$ and every ${\bf a}_i$ that occurs with a negative coefficient in (12.20) also occurs with a negative coefficient in (12.22).  But $w(\hat{u}+{\bf a}_{i_0})>w(\hat{u})$, contradicting the choice of $\hat{u}$.  

We claim that $z_i>-1$ for $i=n+1,\dots,n+m\ (=N)$.  If $z_{i_0}=-1$ for some $i_0\in\{n+1,\dots,N\}$, then (12.22) becomes 
\[ \hat{u}+{\bf a}_{i_0} = \sum_{\substack{i=1\\ i\neq i_0}}^N z_i{\bf a}_i  \]
But since $z_i<0$ for $i=1,\dots,n$, the point $\hat{u}+{\bf a}_{i_0}$ is an element of ${\mathcal M}$, and again $w(\hat{u}+{\bf a}_{i_0})>w(\hat{u})$, contradicting the choice of $\hat{u}$.  

In summary, we have proved that in the representation (12.20) one has
\begin{align}
z_i\in[-1,0)\quad & \text{for $i=1,\dots,n$,}\\
z_i\in(-1,0]\quad & \text{for $i=n+1,\dots,N$.}
\end{align}

We now examine (12.20) coordinatewise.  For $i=1,\dots,n$ we have 
\begin{equation}
\hat{u}_i = -\theta_i + \tilde{u}_i = z_i + C_i(z_{n+1},\dots,z_{n+m}).
\end{equation}
This equation shows that 
\[ z_i\equiv -\theta_i-C_i(z_{n+1},\dots,z_{n+m})\pmod{\mathbb Z}, \]
and since $-1\leq z_i<0$ this implies that
\begin{equation}
z_i = -1-\theta_i-C_i(z_{n+1},\dots,z_{n+m})-\lfloor -\theta_i-C_i(z_{n+1},\dots,z_{n+m})\rfloor.
\end{equation}
This implies by (12.25) that 
\begin{equation}
\tilde{u}_i = -1-\lfloor -\theta_i-C_i(z_{n+1},\dots,z_{n+m})\rfloor.
\end{equation}

It follows that
\begin{align}
\sum_{i=1}^n \tilde{u}_i &= -\sum_{i=1}^n \lfloor 1-\theta_i - C_i(z_{n+1},\dots,z_{n+m})\rfloor \\
 & = -\sum_{i=1}^n \lfloor 1-\theta_i + C_i(-z_{n+1},\dots,-z_{n+m})\rfloor. \nonumber
\end{align}
It is clear from this equation that if (12.18) holds for all $x_1,\dots,x_m\in[0,1)$ then (12.19) holds.  

Conversely, suppose there exist $x_1,\dots,x_m\in[0,1)$ such that
\begin{equation}
 \sum_{i=1}^n \lfloor 1-\theta_i + C_i(x_1,\dots,x_m)\rfloor <0. 
 \end{equation}
For $i=1,\dots,m$, define $z_{n+i} = -x_i$, so $-1<z_{n+i}\leq 0$.  Then for $i=1,\dots,n$ define $z_i$ by (12.26) and define $\tilde{u}_i$ by (12.27).  Then $-1\leq z_i<0$ for $i=1,\dots,n$, so (12.20) shows that $\hat{u}$ is an interior point of $-C(A)$.  Equation~(12.28) holds by the definition of the $\tilde{u}_i$.  Equations~(12.28) and~(12.29) then imply that $\sum_{i=1}^n \tilde{u}_i>0$.  This shows that the failure of (12.18) implies the failure of (12.19), which in turn implies that $w(\hat{u})>w(\beta)$, i.e., Equation (2.3) fails.
\end{proof}

As an application of Theorem 12.15, consider the two-variable Horn series
\begin{equation}
 G_1(\theta_1,\theta_2,\theta_3;t_1,t_2) = \sum_{s_1,s_2 = 0}^\infty (\theta_1)_{s_1+s_2} (\theta_2)_{s_2-s_1}(\theta_3)_{s_1-s_2}\frac{t_1^{s_1}t_2^{s_2}}{s_1!s_2!}
 \end{equation}
 with $0<\theta_1,\theta_2,\theta_3<1$.
As above, we let ${\bf a}_i$ for $i=1,2,3$ be the standard unit basis vectors for ${\mathbb R}^3$ and we take ${\bf a}_4=(1,-1,1)$ (the coefficients of $s_1$ in the subscripts for the $\theta_i$) and ${\bf a}_5 = (1,1,-1)$ (the coefficients of~$s_2$ in the subscripts for the $\theta_i$).  We have $C_1(x_1,x_2) = x_1+x_2$, $C_2(x_1,x_2) = x_2-x_1$, and $C_3(x_1,x_2) = x_1-x_2$.  To apply Theorem~12.15, we need to consider expressions of the form
\[ \lfloor 1-\theta_1+x_1+x_2\rfloor + \lfloor 1-\theta_2 + x_2-x_1\rfloor + \lfloor 1-\theta_3 + x_1-x_2\rfloor \]
and determine when they are nonnegative for all $x_1,x_2\in[0,1)$.  
\begin{lemma}
We have 
\begin{equation}
\lfloor 1-\theta_1+x_1+x_2\rfloor + \lfloor 1-\theta_2 + x_2-x_1\rfloor + \lfloor 1-\theta_3 + x_1-x_2\rfloor\geq 0
\end{equation}
for all $x_1,x_2\in[0,1)$ if and only if either $\theta_2+\theta_3\leq 1$ or both $\theta_1+\theta_2\leq 1$ and $\theta_1 + \theta_3\leq 1$.
\end{lemma}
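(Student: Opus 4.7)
The plan is to reduce the three-term inequality to a case analysis in the variable $y=x_1-x_2 \in (-1,1)$, treating the second and third floor terms together.

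First I would observe that for any $y\in(-1,1)$, the quantity $S(y) := \lfloor 1-\theta_2 - y\rfloor + \lfloor 1-\theta_3 + y\rfloor$ takes only the values $\{-1,0,1\}$, and a direct case analysis shows $S(y) = -1$ precisely when $y\in(1-\theta_2,\theta_3)$ or $y\in(-\theta_2,-(1-\theta_3))$. The first interval is nonempty iff $\theta_2+\theta_3>1$, and the second interval is its image under $y\mapsto -y$ after swapping $\theta_2\leftrightarrow\theta_3$ (also nonempty iff $\theta_2+\theta_3>1$). On all other values of $y$ (and in particular whenever $\theta_2+\theta_3\le 1$), we have $S(y)\geq 0$. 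Since $1-\theta_1+x_1+x_2>0$, the first floor is always $\geq 0$, and so $\theta_2+\theta_3\le 1$ immediately implies inequality~(12.32).

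Next I would handle the remaining case $\theta_2+\theta_3>1$. It suffices to check when the $-1$-contribution of $S(y)$ is compensated by $\lfloor 1-\theta_1+x_1+x_2\rfloor\geq 1$, i.e.\ by $x_1+x_2\geq\theta_1$. Consider the branch $y=x_1-x_2\in(1-\theta_2,\theta_3)$; then $x_1+x_2=2x_2+y$ with $x_2\in[0,1-y)$, so the infimum of $x_1+x_2$ over the problematic region is $1-\theta_2$ (approached as $x_2\to 0$ and $y\to 1-\theta_2$, but not attained because the interval is open at $1-\theta_2$). Since the required inequality is $x_1+x_2\geq\theta_1$ on an open region, this holds for every point of the region iff $1-\theta_2\geq\theta_1$, i.e.\ iff $\theta_1+\theta_2\leq 1$. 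The symmetric branch ($x_2>x_1$, obtained by swapping $\theta_2\leftrightarrow\theta_3$) yields the condition $\theta_1+\theta_3\leq 1$.

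Combining these, inequality~(12.32) holds for all $(x_1,x_2)\in[0,1)^2$ iff either $\theta_2+\theta_3\leq 1$, or $\theta_2+\theta_3>1$ together with both $\theta_1+\theta_2\leq 1$ and $\theta_1+\theta_3\leq 1$, which is exactly the statement of the lemma. The delicate point, and the only step worth care, is the strict/non-strict boundary behavior: I expect the main obstacle to be verifying that the infimum $1-\theta_2$ is not attained (so the compensation condition $\theta_1+\theta_2\leq 1$ may be non-strict) and that, conversely, when $\theta_1+\theta_2>1$ one can actually exhibit a specific $(x_1,x_2)$ violating~(12.32) — this will be done by choosing $x_2=0$ and $y\in(1-\theta_2,\min\{\theta_1,\theta_3\})$, which is nonempty precisely when $\theta_1+\theta_2>1$ and $\theta_2+\theta_3>1$.
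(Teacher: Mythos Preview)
Your proposal is correct and follows essentially the same approach as the paper's proof. Both arguments observe that the first floor term is always nonnegative, identify when the sum of the last two terms equals $-1$ (you do this via the substitution $y=x_1-x_2$, the paper via the sum $(1-\theta_2+x_2-x_1)+(1-\theta_3+x_1-x_2)=2-\theta_2-\theta_3$), show that the first term compensates precisely under the conditions $\theta_1+\theta_2\le 1$ and $\theta_1+\theta_3\le 1$, and produce the identical counterexample $x_2=0$, $x_1\in(1-\theta_2,\min\{\theta_1,\theta_3\})$ for the converse.
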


\begin{proof}
On the left-hand side of (12.32), the first term is always nonnegative and the last two terms are greater than or equal to~$-1$.  Since
\[ (1-\theta_2 + x_2-x_1) + (1-\theta_3 + x_1-x_2) = 2-\theta_2-\theta_3\geq 0, \]
at most one of the last two terms on the left-hand side of (12.32) can be negative.  Thus the only way the left-hand side of (12.32) can be negative is if either the first two terms equal $0$ and the third term equals $-1$ or the first and third terms equal $0$ and the second term equal $-1$.

If $\theta_2+\theta_3\leq 1$, then
\[ (1-\theta_2 + x_2-x_1) + (1-\theta_3 + x_1-x_2) \geq 1, \]
which implies that the sum of the last two terms on the left-hand side of (12.32) is nonnegative, hence (12.32) holds for all $x_1,x_2\in[0,1)$.

Suppose that $\lfloor 1-\theta_2 + x_2-x_1\rfloor = -1$ and $\lfloor 1-\theta_3 + x_1-x_2\rfloor = 0$.  If $\theta_1 + \theta_2\leq 1$ then 
\[ (1-\theta_1 + x_1 + x_2) + (1-\theta_2 + x_2-x_1) = 2-\theta_1-\theta_2 + 2x_2\geq 1, \]
which implies that the sum of the first two terms on the left-hand side of (12.32) is nonnegative.  A similar argument applies to the case
$\lfloor 1-\theta_3 + x_1-x_2\rfloor = -1$ and $\lfloor 1-\theta_2 + x_2-x_1\rfloor = 0$ if $\theta_1 + \theta_3\leq 1$.  

To prove the converse, suppose that $\theta_2+\theta_3>1$ and $\theta_1 + \theta_2>1$.  Then
\begin{equation} \theta_1>1-\theta_2\text{ and } \theta_3>1-\theta_2.
\end{equation}
Take $x_2=0$ and choose $x_1$ so that $\theta_1,\theta_3>x_1>1-\theta_2$.  Then the first and third terms on the left-hand side of (12.32) equal 0 while the second term on the left-hand side of (12.32) equals $-1$.  A similar argument applies when $\theta_2 + \theta_3>1$ and $\theta_1 + \theta_3>1$.  
\end{proof}

From Lemma 12.31 and Theorem 12.15 we get the following.
\begin{corollary}
Suppose for each $i=0,1,\dots,a-1$ we have either $\theta_2^{(i)} + \theta_3^{(i)}\leq 1$ or both $\theta_1^{(i)} + \theta_2^{(i)}\leq 1$ and $\theta_1^{(i)}+\theta_3^{(i)}\leq 1$.  Then the series $G_1(\theta^{(i)}_1,\theta_2^{(i)},\theta_3^{(i)};t_1,t_2)$ have $p$-integral coefficients for all primes $p\equiv h\pmod{D}$.
\end{corollary}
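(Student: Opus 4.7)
\smallskip

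\noindent\textbf{Proof proposal.} The approach is to recognize that the corollary is a direct specialization of Theorem~12.15 combined with Lemma~12.31, so the plan consists mostly of verifying the dictionary between the Horn series $G_1$ and the $A$-hypergeometric series $F_{\beta^{(i)}}$.

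First I would check that $G_1$ fits the framework of Section~12. Take $n=3$, $m=2$, and let $\mathbf{a}_1,\mathbf{a}_2,\mathbf{a}_3$ be the standard basis of $\mathbb{R}^3$, together with $\mathbf{a}_4=(1,-1,1)$ and $\mathbf{a}_5=(1,1,-1)$, so that $C_1(s)=s_1+s_2$, $C_2(s)=s_2-s_1$, $C_3(s)=s_1-s_2$. Condition (12.1) holds because the column sums of the matrix $(c_{jk})$ are both~$1$; each $s_k$ appears with nonzero coefficient in all three $C_j$; and the assumption $0<\theta_j<1$ ensures that $\theta_j$ is $p$-integral for primes $p$ with $(p,D)=1$ and that the boundary cases $\theta_j\in\{0,1\}$ do not occur, so no additional sign restrictions on the $c_{jk}$ are needed. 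With $v^{(i)}=(-\theta_1^{(i)},-\theta_2^{(i)},-\theta_3^{(i)},0,0)$ and $\beta^{(i)}=\sum_j v^{(i)}_j\mathbf{a}_j$, the computation (12.12) shows that the nonzero coefficients of $F_{\beta^{(i)}}(\Lambda)$ agree up to sign with those of $G_1(\theta_1^{(i)},\theta_2^{(i)},\theta_3^{(i)};t_1,t_2)$. Since signs do not affect $p$-integrality, it suffices to prove $p$-integrality of $F_{\beta^{(i)}}(\Lambda)$ for $i=0,1,\dots,a-1$.

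Next I would translate the floor hypothesis of Theorem~12.15 using Lemma~12.31. For the Horn data at hand,
\[ \rho(\Theta^{(i)};x_1,x_2)=\lfloor 1-\theta_1^{(i)}+x_1+x_2\rfloor+\lfloor 1-\theta_2^{(i)}+x_2-x_1\rfloor+\lfloor 1-\theta_3^{(i)}+x_1-x_2\rfloor, \]
and Lemma~12.31 asserts that this is nonnegative on $[0,1)^2$ precisely when either $\theta_2^{(i)}+\theta_3^{(i)}\leq 1$ or both $\theta_1^{(i)}+\theta_2^{(i)}\leq 1$ and $\theta_1^{(i)}+\theta_3^{(i)}\leq 1$ hold. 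This is exactly the hypothesis of the corollary, so the hypothesis of Theorem~12.15 holds for every $i=0,1,\dots,a-1$.

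Finally I would apply Theorem~12.15, which gives $p$-integrality of $F_{u^{(i)}}(\Lambda)$ for every $u^{(i)}\in\mathcal{M}^{(i)}$ and every prime $p\equiv h\pmod D$; specializing to $u^{(i)}=\beta^{(i)}$ and invoking the identification above yields the desired $p$-integrality of each series $G_1(\theta_1^{(i)},\theta_2^{(i)},\theta_3^{(i)};t_1,t_2)$. There is no serious obstacle; the proof is essentially bookkeeping. The only point requiring a moment's attention is the verification that $\beta^{(i)}$ actually lies in $-C(A)^\circ$ (so that $\beta^{(i)}\in\mathcal{M}^{(i)}$), which follows by the same cone argument given for $\beta$ between (12.4) and (12.5), since each $\theta_j^{(i)}\in(0,1)$.
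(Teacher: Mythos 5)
Your proposal is correct and follows essentially the same route as the paper: the paper derives Corollary~12.34 directly from Lemma~12.31 (which characterizes nonnegativity of $\rho(\Theta^{(i)};x_1,x_2)$ by exactly the stated inequalities) together with Theorem~12.15, using the identification of $G_1$ with $F_{\beta^{(i)}}(\Lambda)$ set up in the preceding paragraphs of Section~12. Your extra checks (that (12.1) holds, that each $s_k$ appears in at least two $C_j$, and that $\beta^{(i)}\in -C(A)^\circ$) are precisely the verifications the paper carries out before stating the corollary.
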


By Equation (2.12) the series $F_{\beta^{(i)}}(\Lambda)$, with $\beta = (-\theta_1,-\theta_2,-\theta_3)$ and $\{{\bf a}_i\}_{i=1}^5$ as in this example, have the same coefficients (up to sign) as the $G_1(\theta^{(i)}_1,\theta_2^{(i)},\theta_3^{(i)};t_1,t_2)$.  
By Theorem 12.15, the hypothesis of Corollary 12.34 implies $p$-integrality for not just one series but for all the series $F_{u^{(i)}}(\Lambda)$ with $u^{(i)}\in{\mathcal M}^{(i)}$.  We describe these series.  To compute the series $F_{u^{(i)}}(\Lambda)$ given by (12.8) we need to find the set $E(u^{(i)})$ of (12.9).  For $u^{(i)}\in \beta^{(i)} + {\mathbb Z}A$ write
\[ u^{(i)} = \beta^{(i)} + \tilde{u}^{(i)} \]
with $\tilde{u}^{(i)} = (\tilde{u}^{(i)}_1,\tilde{u}^{(i)}_2,\tilde{u}^{(i)}_3)\in{\mathbb Z}^3$.  To find $E(u^{(i)})$ we need to solve the equation
\begin{equation}
\sum_{k=1}^3 (-\theta_k^{(i)}+l_k){\bf a}_i + l_4{\bf a}_4 + l_5{\bf a}_5 = (-\theta^{(i)}_1+\tilde{u}^{(i)}_1,-\theta^{(i)}_2+\tilde{u}^{(i)}_2,-\theta^{(i)}_3+\tilde{u}^{(i)}_3), 
 \end{equation}
subject to the condition that $(l_1,\dots,l_5)\in E$, i.~e., $l_k\in{\mathbb Z}$ for $k=1,2,3$ and $l_k\in{\mathbb Z}_{\geq 0}$ for $k=4,5$.  

Equation (12.35) simplifies to the system 
\[ l_1+l_4+l_5 = \tilde{u}^{(i)}_1,\quad l_2-l_4+l_5=\tilde{u}_2^{(i)},\quad l_3 +l_4-l_5=\tilde{u}_3^{(i)}. \]
The solution in $E$ is to take $l_4,l_5\in{\mathbb Z}_{\geq 0}$ arbitrary, then $l_1 = -l_4-l_5+\tilde{u}^{(i)}_1$, $l_2 = l_4-l_5+\tilde{u}^{(i)}_2$, and $l_3 = -l_4+l_5 + \tilde{u}^{(i)}_3$.  Substituting these values into (12.8) and using the relation (1.3) shows that the series
\begin{equation}
 \sum_{s_1,s_2=0}^\infty (\theta^{(i)}_1)_{s_1+s_2-\tilde{u}^{(i)}_1}(\theta_2^{(i)})_{s_2-s_1-\tilde{u}^{(i)}_2}(\theta_3^{(i)})_{s_1-s_2-\tilde{u}^{(i)}_3}\frac{t_1^{s_1}t_2^{s_2}}{s_1!s_2!} 
 \end{equation}
has $p$-integral coefficients for all primes $p\equiv h\pmod{D}$ when $u^{(i)}\in{\mathcal M}^{(i)}$ and the hypothesis of Corollary~12.34 is satisfied.

Choosing $u=\beta$ gives $\tilde{u} = {\bf 0}$ and (12.36)  becomes the series $G_1(\theta^{(i)}_1,\theta_2^{(i)},\theta_3^{(i)};x_1,x_2)$.  We give another example of $u\in{\mathcal M}$.  
The faces of the cone $C(A)$ lie in the planes $x_1=0$, $x_2+x_3=0$, $x_1+x_2=0$, and $x_1+x_3=0$.  The interior points of $-C(A)$ are the points where these four linear forms take on negative values.  Suppose that $\theta_2^{(i)} + \theta_3^{(i)}\leq 1$ for $i=0,1,\dots,a-1$, so the hypothesis of Corollary~12.34 is satisfied, but that $\theta_1+\theta_2>1$.  Then $u=(-\theta_1,-\theta_2+1,-\theta_3-1)$ is an interior point of $-C(A)$, so the series corresponding to $u$ also has $p$-integral coefficients.  Thus $\tilde{u} = (0,1,-1)$ and substitution into (12.36) gives
\[ \sum_{s_1,s_2=0}^\infty (\theta_1)_{s_1+s_2}(\theta_2)_{s_2-s_1-1}(\theta_3)_{s_1-s_2+1}\frac{t_1^{s_1}t_2^{s_2}}{s_1!s_2!}, \]
a series with $p$-integral coefficients for all primes $p\equiv h\pmod{D}$.

\end{document}